\title[]{Semistable Higgs bundles, periodic Higgs bundles and representations of algebraic fundamental groups}
\author[Guitang Lan]{Guitang Lan}
\email{lan@uni-mainz.de; zuok@uni-mainz.de}
\address{Institut f\"{u}r  Mathematik, Universit\"{a}t
	Mainz, Mainz, 55099, Germany}
\author[Mao Sheng]{Mao Sheng}
\email{msheng@ustc.edu.cn}
\address{School of Mathematical Sciences,
	University of Science and Technology of China, Hefei, 230026, China}
\author[Kang Zuo]{Kang Zuo}
\begin{document}
\theoremstyle{plain}
\newtheorem{thm}{Theorem}[section]
\newtheorem{theorem}[thm]{Theorem}
\newtheorem{lemma}[thm]{Lemma}
\newtheorem{corollary}[thm]{Corollary}
\newtheorem{proposition}[thm]{Proposition}
\newtheorem{addendum}[thm]{Addendum}
\newtheorem{variant}[thm]{Variant}
\theoremstyle{definition}
\newtheorem{lemma and definition}[thm]{Lemma and Definition}
\newtheorem{construction}[thm]{Construction}
\newtheorem{notations}[thm]{Notations}
\newtheorem{question}[thm]{Question}
\newtheorem{problem}[thm]{Problem}
\newtheorem{remark}[thm]{Remark}
\newtheorem{remarks}[thm]{Remarks}
\newtheorem{definition}[thm]{Definition}
\newtheorem{statement}[thm]{Statement}
\newtheorem{claim}[thm]{Claim}
\newtheorem{assumption}[thm]{Assumption}
\newtheorem{assumptions}[thm]{Assumptions}
\newtheorem{properties}[thm]{Properties}
\newtheorem{example}[thm]{Example}
\newtheorem{conjecture}[thm]{Conjecture}
\newtheorem{proposition and definition}[thm]{Proposition and Definition}
\numberwithin{equation}{thm}

\newcommand{\pP}{{\mathfrak p}}
\newcommand{\sA}{{\mathcal A}}
\newcommand{\sB}{{\mathcal B}}
\newcommand{\sC}{{\mathcal C}}
\newcommand{\sD}{{\mathcal D}}
\newcommand{\sE}{{\mathcal E}}
\newcommand{\sF}{{\mathcal F}}
\newcommand{\sG}{{\mathcal G}}
\newcommand{\sH}{{\mathcal H}}
\newcommand{\sI}{{\mathcal I}}
\newcommand{\sJ}{{\mathcal J}}
\newcommand{\sK}{{\mathcal K}}
\newcommand{\sL}{{\mathcal L}}
\newcommand{\sM}{{\mathcal M}}
\newcommand{\sN}{{\mathcal N}}
\newcommand{\sO}{{\mathcal O}}
\newcommand{\sP}{{\mathcal P}}
\newcommand{\sQ}{{\mathcal Q}}
\newcommand{\sR}{{\mathcal R}}
\newcommand{\sS}{{\mathcal S}}
\newcommand{\sT}{{\mathcal T}}
\newcommand{\sU}{{\mathcal U}}
\newcommand{\sV}{{\mathcal V}}
\newcommand{\sW}{{\mathcal W}}
\newcommand{\sX}{{\mathcal X}}
\newcommand{\sY}{{\mathcal Y}}
\newcommand{\sZ}{{\mathcal Z}}
\newcommand{\A}{{\mathbb A}}
\newcommand{\B}{{\mathbb B}}
\newcommand{\C}{{\mathbb C}}
\newcommand{\D}{{\mathbb D}}
\newcommand{\E}{{\mathbb E}}
\newcommand{\F}{{\mathbb F}}
\newcommand{\G}{{\mathbb G}}
\renewcommand{\H}{{\mathbb H}}
\newcommand{\I}{{\mathbb I}}
\newcommand{\J}{{\mathbb J}}
\renewcommand{\L}{{\mathbb L}}
\newcommand{\M}{{\mathbb M}}
\newcommand{\N}{{\mathbb N}}
\renewcommand{\P}{{\mathbb P}}
\newcommand{\Q}{{\mathbb Q}}
\newcommand{\Qbar}{\overline{\Q}}
\newcommand{\R}{{\mathbb R}}
\newcommand{\SSS}{{\mathbb S}}
\newcommand{\T}{{\mathbb T}}
\newcommand{\U}{{\mathbb U}}
\newcommand{\V}{{\mathbb V}}
\newcommand{\W}{{\mathbb W}}
\newcommand{\Z}{{\mathbb Z}}
\newcommand{\g}{{\gamma}}
\newcommand{\bb}{{\beta}}
\newcommand{\as}{{\alpha}}
\newcommand{\Id}{{\rm Id}}
\newcommand{\rk}{{\rm rank}}
\newcommand{\END}{{\mathbb E}{\rm nd}}
\newcommand{\End}{{\rm End}}
\newcommand{\Hom}{{\rm Hom}}
\newcommand{\Hg}{{\rm Hg}}
\newcommand{\tr}{{\rm tr}}
\newcommand{\Sl}{{\rm Sl}}
\newcommand{\Gl}{{\rm Gl}}
\newcommand{\Cor}{{\rm Cor}}

\newcommand{\SO}{{\rm SO}}
\newcommand{\OO}{{\rm O}}
\newcommand{\SP}{{\rm SP}}
\newcommand{\Sp}{{\rm Sp}}
\newcommand{\UU}{{\rm U}}
\newcommand{\SU}{{\rm SU}}
\newcommand{\SL}{{\rm SL}}
\newcommand{\Spec}{\mathrm{Spec}}
\newcommand{\Spf}{\mathrm{Spf}}
\newcommand{\ra}{\rightarrow}
\newcommand{\xra}{\xrightarrow}
\newcommand{\la}{\leftarrow}
\newcommand{\Nm}{\mathrm{Nm}}
\newcommand{\Gal}{\mathrm{Gal}}
\newcommand{\Res}{\mathrm{Res}}
\newcommand{\GL}{\mathrm{GL}}

\newcommand{\GSp}{\mathrm{GSp}}
\newcommand{\Tr}{\mathrm{Tr}}

\newcommand{\bA}{\mathbf{A}}
\newcommand{\bK}{\mathbf{K}}
\newcommand{\bM}{\mathbf{M}} 
\newcommand{\bP}{\mathbf{P}}
\newcommand{\bC}{\mathbf{C}}
\newcommand{\HIG}{\mathrm{HIG}}
\newcommand{\MIC}{\mathrm{MIC}}
\newcommand{\Aut}{\mathrm{Aut}}


\thanks{The first and third named authors are supported by the SFB/TR 45 `Periods, Moduli Spaces and Arithmetic of Algebraic Varieties' of the DFG. The second named author is supported by National Natural Science Foundation of China (Grant No. 11622109, No. 11626253) and the Fundamental Research Funds for the Central Universities.\\
1. Higgs-de Rham flow was called Higgs-de Rham sequence in \cite{LSZ2012}. The change has been made because of its analogue to the notion of Yang-Mills-Higgs flow over $\C$.}

\begin{abstract}
Let $k $ be the algebraic closure of a finite field of odd
characteristic $p$ and  $X$ a smooth projective scheme over the
Witt ring $W(k)$ which is geometrically connected in characteristic zero. 
We introduce the notion of \emph{Higgs-de Rham
flow}\footnote{A Higgs-de Rham flow was previously called a Higgs-de
Rham sequence in \cite{LSZ2012}. The change has been made in the
current version because of its analogue to the notion of Yang-Mills
flow over $\C$.} and prove that the category of periodic Higgs-de
Rham flows over $X/W(k)$  is equivalent to the category of
Fontaine modules, hence further equivalent to the category
of crystalline representations of the  \'{e}tale fundamental group $\pi_1(X_K)$ of the generic fiber of $X$, after Fontaine-Laffaille and Faltings. Moreover, we prove that every semistable Higgs bundle over the special fiber $X_k$ of $X$ of rank $\leq p$ initiates a semistable Higgs-de Rham flow and thus those of rank $\leq p-1$ with trivial Chern classes induce $k$-representations of $\pi_1(X_K)$. A fundamental construction in this paper is the inverse Cartier transform over a truncated Witt ring. In characteristic $p$, it was constructed by Ogus-Vologodsky in the nonabelian Hodge theory in positive characteristic;  in the affine local case, our construction is related to the local Ogus-Vologodsky correspondence of Shiho.
\end{abstract}

\maketitle

\tableofcontents

\section{Introduction}
Let $k$ be the algebraic closure of a finite field of odd characteristic $p$, $W:=W(k)$ the ring of Witt vectors and $K$ its fraction field.
Let $X$ be a smooth projective scheme over $W$ which is geometrically connected in characteristic zero. The paper is aimed to establish a
correspondence between certain Higgs bundles over $X$ with trivial Chern classes which are stable over its special fiber $X_k:=X\times_Wk$ and
certain integral crystalline representations which are absolutely irreducible modulo $p$ of the \'{e}tale fundamental group $\pi_1(X_K)$ of the generic fiber $X_K:=X\times_WK$. \\

Inspired by the complex analytic theory of Simpson \cite{Si92}, Ogus and Vologodsky \cite{OV} has established the nonabelian Hodge theorem in positive characteristic, 
that is an equivalence of categories between a category of certain nilpotent Higgs modules and a category of certain nilpotent flat modules over a smooth variety over 
$k$ which is $W_2(k)$-liftable ($\textrm{char}\ k=2$ is also allowed). This equivalence generalizes both the classical Cartier descent theorem and the relation between a strict $p$-torsion Fontaine module and the associated graded Higgs module. Compared with the complex theory, an obvious distinction is that the semistability condition on Higgs modules does not 
play any role in Ogus-Vologodsky's correspondence. In the meantime, Faltings and others have been attempting to establish an analogue of Simpson's theory for varieties 
over $p$-adic fields. In  \cite{Fa3}, Faltings has established  an equivalence of categories between the category of generalized representations of the geometric fundamental 
group and the category of Higgs bundles over a $p$-adic curve, which has generalized the earlier work of Deninger-Werner \cite{DW} on a partial $p$-adic analogue of 
Narasimhan-Seshadri theory. However, the major problem  concerning the role of semistability remains open; Faltings asked whether semistable Higgs bundles of degree 
zero come from  genuine representations instead of   merely generalized ones. We will address  the semistability condition in this paper.\\

Recall that, when $X$ is a complex smooth projective
variety, such a correspondence has been established first by Hitchin
\cite{Hitchin} for  polystable Higgs bundles
over a smooth projective curve and then by Simpson \cite{Si88} in general.
The key step in the Hitchin-Simpson correspondence, as one may see in the
following diagram, is to solve the Yang-Mills-Higgs equation and
obtain a  correspondence from graded Higgs bundles to polarized complex variations of Hodge structure over $X$. \vspace{3mm}

\hspace*{-3em} 
\begin{adjustbox}{scale=0.9}
\begin{tikzpicture}[commutative diagrams/every diagram]

\matrix[matrix of nodes, name=a] {
&
& [-3em] |[name=a12]| {\footnotesize $\Set{(H,\nabla, Fil, \Phi)/X}$}
& \\
\\
\\
& |[name=a21]| {\footnotesize $\Set{\rho|\parbox{12em}{$\rho: \ \pi^{top}_1(X,x)\to GL(\mathbb{C})$ representation of Hodge type}}$}
&
& [-2em] |[name=a23]| {\footnotesize $\left\{\parbox{13em}{$(\oplus_{i+j=n} E^{i,j}, \theta)/X$ $\,\,|\,\,$ graded Higgs bundle of $c_i=0, i>0$ and polystable}\right\}$}
\\
};

\path[commutative diagrams/.cd, every arrow, every label]
(a12) edge[commutative diagrams/rightarrow] node[auto]{\(Gr_{Fil}\)} (a23)
(a12) edge[commutative diagrams/leftarrow, bend left] node[auto, pos=0.82]{\(\parbox{9.8em}{\begin{center}{\footnotesize Solution of \\ Yang-Mills-Higgs equation \\ by Hitchin and Simpson}\end{center}}\)} (a23)
(a21) edge[commutative diagrams/leftrightarrow] node[auto]{\(\parbox{7em}{\footnotesize{Hitchin-Simpson \\ correspondence}}\)} (a23)
(a21) edge[commutative diagrams/leftrightarrow] node[auto]{\(\parbox{7em}{\footnotesize{Riemann-Hilbert \\ correspondence}}\)} (a12)
;

\end{tikzpicture}
\end{adjustbox}


\vspace{3mm}

The notion of polarized $\mathbb{Z}$-variation of Hodge structure $(H,\nabla, Fil,\Phi)$ was introduced by P. Griffiths \cite{G}, and later generalized to polarized  $\mathbb{C}$-variation of Hodge structure by P. Deligne \cite{De87}, where $(H,\nabla)$ is a flat bundle with a Hodge filtration, $\Phi$ is a polarization, which is horizontal with
respect to $\nabla$ and satisfies the Riemann-Hodge bilinear
relations;  $(\oplus_{i+j=n}E^{i,j},\theta)$ denotes a graded Higgs bundle, where $\theta$ is a direct sum of $\sO_X$-linear
morphisms $ \theta^{i,j}: E^{i,j}\to E^{i-1,j+1}\otimes \Omega_{X}$
and $\theta\wedge\theta=0$. \\

We return to the $p$-adic case. A good $p$-adic analogue of the category of polarized complex variations of Hodge structure is the category $MF^{\nabla}_{[0,w]}(X/W) ( w\leq p-1)$ introduced first by Fontaine-Laffaille \cite{FL} for $X=\Spec\ W$ and later generalized by Faltings \cite[Chapter II]{Fa1} to the general case. An object in the category, called Fontaine module, is also a quadruple $(H,\nabla,Fil,\Phi)$, where $(H,\nabla)$ is a flat bundle over $X$, by which we mean a locally free $\sO_X$-module $H$ of finite rank, equipped with an integrable $W$-connection $\nabla$, $Fil$ is a Hodge filtration, $\Phi$ is a relative Frobenius which is horizontal with respect to $\nabla$ and satisfies some compatibility and strong $p$-divisibility conditions. The latter condition is a $p$-adic analogue of the Riemann-Hodge bilinear relations. See \S2 for more details. Fontaine-Laffaille proved in the $X=\Spec\ W$ case and Faltings in the general case that, there exists a fully faithful functor from the category $MF^{\nabla}_{[0,w]}(X/W) (w\leq p-2)$ to the category of \emph{crystalline representations} of $\pi_1(X_{K})$, i.e. a $p$-adic Riemann-Hilbert correspondence. Our objective is to establish a $p$-adic analogue of the Higgs correspondence from a certain category of graded Higgs modules with trivial Chern classes to the category of Fontaine modules. Our results are encoded in the following big diagram:\\

\hspace*{-3em} 
\begin{adjustbox}{scale=0.9}
\begin{tikzpicture}[commutative diagrams/every diagram]

\matrix[matrix of nodes, name=b] {
&
& [-3em] |[name=b12]| {${\footnotesize{MF^\nabla_{[0,w]}(X/W)}}$}
& [3em] |[name=b13]| {\{\footnotesize{Periodic Higgs-de Rham flow}\}} \\
\\
& |[name=b21]| {\footnotesize $\Set{\rho|\parbox{11em}{$\rho:\ \pi_1(X_{K})\to GL(\mathbb{Z}_p)$, \\ crystalline representation}}$}
&
& |[name=b23]| {\footnotesize $\left\{\parbox{13em}{$(\oplus_{i+j=w} E^{i,j}, \theta)/X$ $\,\,|\,\,$ graded Higgs module of $c_i=0$, $i>0$ and semistable}\right\}$}
\\
};

\path[commutative diagrams/.cd, every arrow, every label]
(b12) edge[commutative diagrams/rightarrow] node[auto]{\(Gr_{Fil}\)} (b23)
(b23) edge[commutative diagrams/dashed]node[auto,right]{ } (b13)
(b13) edge[commutative diagrams/leftrightarrow] node[auto,above]{\({\footnotesize \parbox{7em}{Higgs\\ correspondence}}\)} (b12)
(b21) edge[commutative diagrams/leftrightarrow] node[auto, pos=0.45]{\({\footnotesize \parbox{12em}{Fontaine-Laffaille-Faltings\\ correspondence}}\)} (b12)
;

\end{tikzpicture}%
\end{adjustbox}
\vspace{3mm}

As seen from the above diagram, the central notion in our theory is \emph{Higgs-de
Rham flow} (especially the periodic one), which can be viewed as an analogue of Yang-Mills-Higgs flow. To define a Higgs-de Rham flow over $X_n:=X\otimes \Z/p^{n}\Z$
for all $n\in \N$, the key ingredient is the inverse Cartier transform $C_n^{-1}$ over $W_n:=W\otimes \Z/p^n\Z$, see Definition \ref{HDRFn}.  It is built upon the seminal work of Ogus-Vologodsky \cite{OV}. In \cite{OV}, they construct the inverse Cartier transform from the category of (suitably nilpotent) Higgs modules over $X_1'=X_1\times_{F_{k}}\Spec\ k$ to the category of (suitably nilpotent) flat modules over $X_1$. However, over a general smooth scheme $X'$ over $W_n, n\geq 2$, our lifting of inverse Cartier transform operates not on the whole category of (suitably nilpotent) Higgs modules, rather on a category which is over a subcategory of graded Higgs modules (over a proper scheme, there is a restriction on the Chern classes of Higgs modules). More details about the category will be given below. In comparison with the construction of Shiho \cite{Shiho}, one finds that the existence of a Frobenius lifting over a chosen lifting of $X'$ over $W_{n+1}$ is not assumed in our construction. On the other hand, we do not know whether the functor $C_n^{-1}$ is fully faithful for a proper $W_n$-scheme when $n\geq 2$.\\


The Higgs correspondence is established in an inductive way. That is, we shall first define the notion of a Higgs de-Rham flow in characteristic $p$ and then establish the Higgs correspondence for the periodic flows. This is the first step. In this step, we need only assume the scheme $X_1$ to be smooth over $k$ and $W_2$-liftable. A choice of such a lifting does matter in the theory. Let us choose and then fix a lifting $X_2/W_2$. Let $S_n:=\Spec\ W_n$ and $F_{S_n}: S_n\to S_n$ the Frobenius automorphism. Set $X_2':=X_2\times_{F_{S_2}}S_2$ which is a $W_2$-lifting of $X_1'=X_1\times_{F_{S_1}}S_1$. Let $(\sX,\sS)=(X_1/k,X_2'/W_2)$ and $C^{-1}_{\sX/\sS}$ the inverse Cartier transform of Ogus-Vologodsky \cite{OV} which restricts to an equivalence of categories from the full subcategory of nilpotent Higgs modules $HIG_{p-1}(X_1')$ of exponent $\leq p-1$ to the full subcategory of nilpotent flat modules $MIC_{p-1}(X_1)$ of exponent $\leq p-1$. See also our previous work \cite{LSZ} for an alternative approach via the exponential twisting to the Ogus-Vologodsky's theory over these subcategories. Let $\pi: X_1'\to X_1$ be the base change of $F_{S_1}$ to $X_1$. From the geometric point of view, it is more natural to make all terms in a flow defining over the same base scheme. So, instead of using $C^{-1}_{\sX/\sS}$, we take the composite functor $C_1^{-1}:=C^{-1}_{\sX/\sS}\circ \pi^*\circ \iota$ from $HIG_{p-1}(X_1)$ to $MIC_{p-1}(X_1)$, where $\iota$ is an automorphism of the category $HIG_{p-1}(X_1)$ defined by sending $(E,\theta)$ to $(E,-\theta)$. The reader is advised to take caution on this point. For a flat module $(H,\nabla)$ over $X_1$, a \emph{Griffiths transverse filtration} $Fil$ of level $w\geq 0$ on $(H,\nabla)$ is defined to be a finite exhaustive decreasing filtration of $H$ by $\sO_{X_1}$-submodules
\begin{align*}
H=Fil^0\supset Fil^1\supset\cdots\supset Fil^w\supset Fil^{w+1}=0,
\end{align*}
such that its grading $\oplus_i Fil^i/Fil^{i+1}$ is torsion free and such that $Fil$ obeys Griffiths' transversality
\begin{align*}
\nabla(Fil^i)\subset Fil^{i-1}\otimes \Omega_{X_1/k}, \ 1\leq i\leq w.
\end{align*}
The triple $(H,\nabla,Fil)$ is called a \emph{de Rham module}.
By taking the grading  with respect to the filtration $Fil$, to every de Rham module  $(H,\nabla,Fil)$ one can canonically associate a graded Higgs module $(E,\theta):= (\oplus_{i}Fil^i/Fil^{i+1}, \oplus_i\bar{\nabla}_i)$, where the $\sO_{X_1}$-morphism $\bar \nabla_i$ is induced from $\nabla$
$$
\bar{\nabla}_i:\frac{Fil^i}{Fil^{i+1}}\to \frac{Fil^{i-1}}{Fil^{i}}\otimes \Omega_{X_1/k}.
$$

\begin{definition}\label{Higgs-de Rham flow}
A Higgs-de Rham flow over $X_1$ is a diagram of the following form:
$$
\begin{adjustbox}{scale=1.1}
\xymatrix{
                &  (H_0,\nabla_0)\ar[dr]^{Gr_{Fil_0}}       &&  (H_1,\nabla_1)\ar[dr]^{Gr_{Fil_1}}    \\
 (E_0,\theta_0) \ar[ur]^{C_1^{-1}}  & &     (E_1,\theta_1) \ar[ur]^{C_1^{-1}}&&\ldots       }
\end{adjustbox}
$$
where the initial term $(E_0,\theta_0)\in HIG_{p-1}(X_1)$, $Fil_i, i\geq 0$ is a Griffiths transverse filtration on the flat module $(H_i,\nabla_i):=C_1^{-1}(E_i,\theta_i)$ of level $\leq p-1$ and $(E_i,\theta_i),i\geq 1$ is the associated graded Higgs module to the de Rham module $(H_{i-1},\nabla_{i-1},Fil_{i-1})$.
\end{definition}

If the filtrations $Fil_i, i\geq 0$ in the definition are all of level $\leq w$ ($w\leq p-1$), it is said to be a Higgs-de Rham flow of level $\leq w$. For a graded torsion-free Higgs module $(E=\oplus_iE^i,\theta)\in HIG_{p-1}(X_1)$ (where $\theta: E^i\to E^{i-1}\otimes \Omega_{X_1/k}$), there is a natural Griffiths transverse filtration on $(H,\nabla):=C_1^{-1}(E,\theta)$, which is actually nontrivial in general. Since the filtration $\{E_l:=\oplus_{i\leq l}E^i\}_l$ of $E$ is $\theta$-invariant, and since $C_1^{-1}$ is an exact functor, $\{(H_l,\nabla_l):=C_1^{-1}(E_l,\theta)\}_l$ is naturally a filtration of $(H,\nabla)$ and it is Griffiths transverse. But since $\{H_l\}_l$ is $\nabla$-invariant, the associated graded Higgs module has always the zero Higgs field. It is important to observe that there exist other nontrivial Griffiths transverse filtrations than the above one: the Hodge filtration in the geometric case (i.e. the strict $p$-torsion Fontaine modules) and the Simpson filtration in the $\nabla$-semistable case (see Theorem \ref{goodfil}). \\

A Higgs-de Rham flow is said  to be \emph{periodic} if there exists an isomorphism of graded Higgs modules $\phi: (E_f,\theta_f)\cong (E_0,\theta_0)$ (an explicit $\phi$ is a part of the definition); it is said  to be
\emph{preperiodic} if it becomes periodic after removing the first few terms. A Higgs module $(E,\theta)\in HIG_{p-1}(X_1)$ is said to be \emph{(pre)periodic} if it initiates a (pre)periodic Higgs-de Rham flow. The reader shall refer to Definition \ref{definition of periodic flow} for a precise definition. One may visualize a periodic Higgs-de Rham flow of period $f$ via the following diagram:

$$
\xymatrix{
                &  (H_0,\nabla_0)\ar[dr]^{Gr_{Fil_0}}     &&  (H_{f-1},\nabla_{f-1})\ar[dr]^{Gr_{Fil_{f-1}}}    \\
 (E_0,\theta_0) \ar[ur]^{C_1^{-1}}  & &    \cdots\cdot \ar[ur]^{C_1^{-1}}&&  (E_f,\theta_f)\ar@/^2pc/[llll]^{\stackrel{\phi}{\cong} } }
$$

\begin{theorem}[Theorem \ref{correspondence in the type (0,f) case}]\label{thm0}
Let $X_2$ be a smooth scheme over $W_2$. Let $w$ be an integer between $0$ and $p-1$. For each $f\in \N$, there is an equivalence of categories between the full subcategory of strict $p$-torsion Fontaine modules over $X_2/W_2$ of Hodge-Tate weight $\leq w$ with endomorphism structure $\F_{p^f}$ and the category of periodic Higgs-de Rham flows over its special fiber $X_1$ of level $\leq w$ and whose periods are $f$.
\end{theorem}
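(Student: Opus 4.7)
My plan is to construct explicit quasi-inverse functors in both directions, first establishing the case $f=1$ and then producing the period-$f$ flow from the $\F_{p^f}$-eigenspace decomposition.

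\medskip

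\emph{Step 1 (the case $f=1$).} Given a strict $p$-torsion Fontaine module $(H,\nabla,Fil,\Phi)$ over $X_k$, set $(E_0,\theta_0) := Gr_{Fil}(H,\nabla,Fil)$. The strong $p$-divisibility condition on $\Phi$ is precisely what matches the gluing data defining the inverse Cartier transform $C_1^{-1}$, so $\Phi$ induces a canonical isomorphism $C_1^{-1}(E_0,\theta_0) \cong (H,\nabla)$. Transporting $Fil$ through this identification gives a Hodge filtration $Fil_0$, and the associated graded recovers $(E_0,\theta_0)$, producing a periodic Higgs--de Rham flow of period $1$. Conversely, starting from a period-$1$ flow consisting of $(E_0,\theta_0)$, $Fil_0$, and an isomorphism $\phi:(E_1,\theta_1)\cong(E_0,\theta_0)$, one defines $(H,\nabla):=C_1^{-1}(E_0,\theta_0)$, takes the Hodge filtration to be $Fil_0$, and defines $\Phi$ via $\phi$ and the definition of $C_1^{-1}$. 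A direct unwinding shows these constructions are mutually inverse.

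\medskip

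\emph{Step 2 (decomposition for general $f$).} For the general case, I would use the $\F_{p^f}$-endomorphism structure to split everything into $f$ pieces. Since $\F_{p^f}\otimes_{\F_p}k\cong k^f$, with factors indexed by the $f$ embeddings $\sigma_0,\dots,\sigma_{f-1}:\F_{p^f}\hookrightarrow k$, a Fontaine module $(H,\nabla,Fil,\Phi)$ with $\F_{p^f}$-endomorphism structure admits a canonical decomposition $H = \bigoplus_{i\in\Z/f\Z} H_i$ compatible with $\nabla$ and with $Fil$ (since the $\F_{p^f}$-action commutes with all structures). However, $\Phi$ is $\F_p$-linear but only Frobenius-semilinear with respect to $\F_{p^f}$, hence cyclically shifts the indices: the argument of Step 1 applied piece-by-piece yields canonical isomorphisms
\begin{equation*}
\Phi_i : C_1^{-1}\!\left(Gr_{Fil}(H_i,\nabla|_{H_i},Fil\cap H_i)\right) \;\xrightarrow{\sim}\; (H_{i+1},\nabla|_{H_{i+1}}).
\end{equation*}
Setting $(E_i,\theta_i):=Gr_{Fil}(H_i)$ and letting $Fil_i$ be the transport of $Fil\cap H_{i+1}$ along $\Phi_i$, one reads off a periodic Higgs--de Rham flow whose period divides $f$.

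\medskip

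\emph{Step 3 (quasi-inverse and functoriality).} In the reverse direction, given a periodic flow of period $f' \mid f$, extend it cyclically to indices in $\Z/f\Z$ and set $H := \bigoplus_{i\in\Z/f\Z} C_1^{-1}(E_i,\theta_i)$ with the direct-sum connection and the direct-sum Hodge filtration coming from the $Fil_i$. Build $\Phi$ from the cyclic comparison isomorphisms in the flow composed with the Ogus--Vologodsky identifications, and let $a\in\F_{p^f}$ act on the $i$-th summand by multiplication by $\sigma_i(a)$. Checking that the two constructions are quasi-inverse reduces to comparing the canonical decomposition with the summands of the flow; functoriality is immediate once one observes that morphisms in either category preserve the decompositions.

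\medskip

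\emph{Main obstacle.} The crux of the argument is Step 1, namely the identification of the Frobenius structure of a strict $p$-torsion Fontaine module with the inverse Cartier transform applied to its associated graded. This requires matching, at the level of local formulas, the construction of $C_1^{-1}$ via Frobenius liftings with the strong $p$-divisibility of $\Phi$ governing how $\Phi$ interacts with $Fil$. Once this is available, Step 2 becomes bookkeeping around the cyclic indexing and the Frobenius-semilinearity of $\Phi$, and Step 3 is a formal verification.
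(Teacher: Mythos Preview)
Your proposal is correct and follows essentially the same approach as the paper: establish the $f=1$ case by identifying the relative Frobenius $\Phi$ with the inverse Cartier transform applied to the associated graded, then handle general $f$ via the eigenspace decomposition under $\F_{p^f}\otimes_{\F_p}k\cong k^f$ together with the cyclic shift forced by Frobenius-semilinearity. The only organizational difference is that the paper factors the general step through an intermediate category of one-periodic flows carrying an $\F_{p^f}$-endomorphism structure (decomposing on the Higgs side after applying the $f=1$ equivalence), whereas you decompose $H$ directly on the de Rham side; this is pure bookkeeping and the content is the same.
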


Next, we construct a lifting to $W_n, n\in \N$ of the inverse Cartier transform of Ogus-Vologodsky \cite{OV} restricted to the full subcategory $HIG_{p-2}(X_1')$. Let $X_n$ be a smooth scheme over $W_n$. We introduce a category $\sH(X'_n)$, where $X'_n:=X_n\times_{F_{S_n}}X_n$, whose object is a tuple $(E,\theta,\bar H,\bar \nabla,\bar{Fil},\bar \psi)$, where $(E,\theta)$ is a nilpotent graded Higgs module over $X'_n$ of exponent $\leq p-2$, $(\bar H,\bar \nabla,\bar{Fil})$ is a de Rham module over $X'_{n-1}$, and $\bar \psi$ is an isomorphism of graded Higgs modules $Gr_{\bar{Fil}}(\bar H,\bar \nabla)\cong (E,\theta)\otimes \Z/p^{n-1}\Z$. For $n=1$, such a tuple is reduced to a nilpotent graded Higgs module over $X_1'$ of exponent $\leq p-2$, and therefore $\sH(X'_1)$ is just the full subcategory $HIG_{p-2}(X_1')$.
\begin{theorem}[Theorem \ref{lifting of inverse cartier}]
Assume $X_n$ is $W_{n+1}$-liftable. There exists a functor $\sC_{n}^{-1}$ from the category $\sH(X_n')$ to the category $MIC(X_n)$ of flat modules over $X_n$ such that $\sC_{n}^{-1}$ lifts $\sC_{n-1}^{-1}$ and such that $\sC_{1}^{-1}$ agrees with the inverse Cartier transform $C^{-1}_{\sX/\sS}$ of Ogus-Vologodsky \cite{OV}.
\end{theorem}
Based on the existence of an inverse Cartier transform over $W_n$, we define inductively a periodic Higgs-de Rham flow over $X_n/W_{n}$ for an arbitrary $n$ (Definition \ref{def for periodic flow over Wn}) and establish the Higgs correspondence over $W_n$ (Theorem \ref{periodic corresponds to FF module over a truncated witt ring}). Passing to the limit, we obtain the $p$-torsion free analogue of Theorem \ref{thm0}:
\begin{theorem}\label{thm1}
Let $X$ be a smooth scheme over $W$. For each integer $0\leq w\leq p-2$ and each $f\in \N$, there is an equivalence of categories between the category of $p$-torsion free Fontaine modules over $X/W$ of Hodge-Tate weight $\leq m$ with endomorphism structure $W(\F_{p^f})$ and the category of periodic Higgs-de Rham flows over $X$ of level $\leq w$ and whose periods are $f$.
\end{theorem}
 
The above theorem (and its $p$-torsion version) and the Fontaine-Laffaille-Faltings correspondence (and its $p$-torsion version, see Theorem \ref{Faltings thm}) together constitute a $p$-adic (and $p$-torsion) version of Hitchin-Simpson correspondence. This correspondence can be regarded as a global and higher Hodge-Tate weight generalization of the Katz's correspondence on unit-root $F$-crystals. Using this correspondence, a $p$-divisible group with isomorphic Kodaira-Spencer map over $W$ is constructed over the Serre-Tate lifting of an ordinary Abelian variety over $k$. See Example \ref{p-divisible group}. \\

Now, we shall bring the semistability condition on Higgs modules into consideration, and explain our results along the vertical dotted line in the above big diagram. For a smooth projective variety $X_1$ over $k$ of positive dimension, we choose (and then fix) an ample divisor $Z_1$ of $X_1$. Let us define the ($\mu$-)semistability for a Higgs module $(E,\theta)$ over $X_1$ with respect to the $\mu_{Z_1}$-slope:
$$
\mu_{Z_1}(E)=c_1(E)Z_1^{\dim X_1-1}/\rk(E).
$$
The slope of a torsion $\sO_{X_1}$-module is set to be infinity. Our first result is the following 
\begin{theorem}[Theorem \ref{quasiperiodic equivalent to strongly semistable}, Theorem \ref{strsst}]\label{thm2}
Notation as above and assume additionally that $X_1$ is $W_2$-liftable. Let $(E,\theta)$ be a torsion free nilpotent Higgs module over $X_1$ of exponent $\leq p-1$. Suppose $\textrm{rank}\ E\leq p$ and $c_i(E)=0, i>0$. Then $(E,\theta)$ is semistable if and only if it is preperiodic. 
\end{theorem}
As a consequence, we obtain the following
\begin{corollary}[Theorem \ref{rank two semistable bundle corresponds to rep}]\label{thm3} 
Let $X/W$ be a smooth projective scheme over $W$. For each nilpotent semistable Higgs bundle $(E,\theta)$ over $X_k$ of $\textrm{rank}\ E\leq p-1$ and $c_i(E)=0, i>0$, one associates to it a unique $\textrm{rank}\ E$ crystalline $k$-representation of $\pi_1(X_K)$ up to isomorphism.
\end{corollary}
The stronger rank condition in the above corollary results from the application of the Fontaine-Laffaille-Faltings correspondence.\\

In the way to lift the previous result to the mixed characteristic situation, a nontrivial obstruction occurs, namely the lifting of a Griffiths transverse filtration in positive characteristic to a truncated Witt ring, which has prevented us from a direct generalization of Corollary \ref{thm3} to the mixed characteristic case. It turns out that, in order to kill the obstruction, one is led to various \emph{ordinary} conditions on the base varieties. By working on this problem for a very simple kind of rank two Higgs bundles of degree zero (the so-called Higgs bundle with maximal Higgs field) over a curve, we have found a $p$-adic analogue of the Hitchin-Simpson's uniformization of hyperbolic curves which relates intimately the above theory to the theory of ordinary curves due to S. Mochizuki (\cite{Mochizuki}). In particular, the canonical lifting theorem of Mochizuki for ordinary curves has been basically recovered in our recent work \cite{LSYZ}.\\

Stability, rather than merely semistability, on periodic Higgs bundles over $k$ makes the choices involved in Higgs-de Rham flows basically unique. By the unicity of one-periodic Higgs-de Rham flow initializing a stable Higgs bundle (Proposition \ref{unique filtration} and Proposition \ref{uniqueness for n}), one is able to identify the category of one-periodic stable Higgs modules with a full subcategory of the category of periodic Higgs-de Rham flows. Combined with this unicity, the Higgs correspondence implies the following rigidity result for Fontaine modules.
\begin{theorem}[Corollary \ref{fully faithful}]
Let $X/W$ be a smooth projective scheme. Let $(H_i,\nabla_i,Fil_i,\Phi_i), i=1,2$ be two $p$-torsion free Fontaine modules over $X/W$, and $(E_i,\theta_i)$ the associated graded Higgs modules. If $(E_i,\theta_i)$ are isomorphic as graded Higgs modules and additionally mod $p$ are Higgs stable, then $(H_i,\nabla_i,Fil_i,\Phi_i), i=1,2$ are isomorphic.
\end{theorem}
To conclude, our final result is the following correspondence. An $\F_{p}$-representation $\rho$ of $\pi_1$ is said to be absolutely irreducible if $\rho\otimes k$ is irreducible. 
\begin{theorem}[Corollary \ref{irreducible over W}]
Notation as above. There is an equivalence of categories between the category of crystalline $\Z_{p}$ (resp. $\Z/p^n\Z$) representations of $\pi_1(X_K)$ with Hodge-Tate weight $\leq p-2$ whose mod $p$ reduction is absolutely irreducible and the category of one-periodic Higgs bundles over $X/W$ (resp. $X_n/W_n$) whose exponent is $\leq p-2$ and mod $p$ reduction is stable.
\end{theorem}
 
In the study of Hitchin-Simpson correspondence, one usually concentrates on the subcategory of Higgs bundles with trivial Chern classes. However, the theory developed in this paper turns out to be also useful in the study of Higgs bundles with nontrivial Chern classes. This has been beautifully demonstrated in the recent work \cite{Langer2} of A. Langer on a purely algebraic proof of the Bogomolov-Giesecker inequality for semistable Higgs bundles in the complex case (\cite[Proposition 3.4]{Si88}) and the Miyaoka-Yau inequality for Chern numbers of complex algebraic surfaces of general type. In his work, the notion of (semistable) Higgs-de Rham flow in characteristic $p$ has played as similar role as the Yang-Mills-Higgs flow over the field of complex numbers.\\

The paper is structured as follows: basically, the paper consists of two parts: the first five sections \S2-\S5 are devoted to establish the theory of the Higgs correspondence between the category of $p$-torsion Fontaine modules with extra endomorphism structures and the category of periodic Higgs-de Rham flows; the last two sections \S6-\S7 and the appendix aim towards applications of the previous theory to produce the representations of $\pi_1$ from (semi)-stable Higgs bundles. In more detail, Section 2 is a preliminary, where we recall the basics of the theory on Fontaine modules; in Section 3, we introduce the notion of a Higgs-de Rham flow in positive characteristic and establish the Higgs correspondence in positive characteristic; in Section 4, we construct a lifting of the inverse Cartier transform of Ogus-Vologodsky over an arbitrary truncated Witt ring; in Section 5, we establish the Higgs correspondence over an arbitrary truncated ring; in Section 6, we introduce the notion of a strongly semistable Higgs module and show that a strongly semistable Higgs module with trivial Chern classes is preperiodic and vice versa, and consequently we produce crystalline representations of the algebraic fundamental groups of the generic fiber with $k$-coefficients from semistable nilpotent Higgs bundles of small ranks over the closed fiber with trivial Chern classes; in Section 7, we prove a rigidity theorem for Fontaine modules whose associated graded Higgs modules are mod $p$ stable; in Appendix A, we (joint with Y.-H. Yang) prove that a semistable Higgs module of small rank is strongly semistable, verifying partially a conjecture in the first version of the paper \cite{LSZ2012}.\\

\section{Preliminaries on Fontaine modules}
The category of Fontaine modules, as introduced by G. Fatlings in \cite{Fa1}, originates from number theory. In the seminal paper \cite{FL}, Fontaine and Laffaille introduced the category $MF^{f,q}$ (resp. $MF^{f,q}_{tor}$) of strongly divisible filtered modules over $W$ (resp. of finite length) and constructed an exact and faithful contravariant functor from the previous category to the category of representations of the Galois group of the local field $K$ (resp. of finite length). A representation lying in the image of the functor is said to be \emph{crystalline}. The significance of the category, as shown in another seminal paper \cite{FM} by Fontaine-Messing, is due to the fact that the crystalline cohomologies of many proper algebraic varieties over $W$ lie in the category. In the above cited paper, Faltings generalized both results to a geometric base (see also \cite{Fa2} for the generalization to the semistable reduction case and the case of a very ramified base ring). For us, this category plays the role connecting a certain category of $p$-adic Higgs modules with a cartain category of $p$-adic representations of algebraic fundamental groups. From the point of view of nonabelian Hodge theory, this category is a nice $p$-adic analogue of polarized complex variations of Hodge structure, a special but important class of the so-called harmonic bundles. One of principal aims of this paper is to establish a correspondence between the category of Fontaine modules and the category of one-periodic Higgs-de Rham flows, in both positive and mixed characteristics. This section is devoted to a brief exposition of this category and related known results.
\begin{remark}
We shall remind the reader of the category of $F$-$T$-crystals developed in the monograph \cite{O}, which is also a $p$-adic analogue of the category of complex variations of Hodge structure (with no emphasis on polarization). This category is intimately related to the category of Fontaine modules (see particularly \cite[Proposition 5.3.9]{O}). It is interesting to relate it to a certain category of Higgs modules. This task has not been touched upon in this paper.
\end{remark}

For clarity, we start with the $p$-torsion free Fontaine modules. Our exposition is based on Ch. II \cite{Fa1} and \S3 \cite{Fa2}. Let $X$ be a smooth $W$-scheme. For an affine subset $U$ flat over $W$, there exists a (nonunique) absolute Frobenius lifting $F_{\hat U}$ on its $p$-adic completion $\hat U$. An object in the category $MF_{[0,w]}^{\nabla}(\hat U)$ is a quadruple $(H, \nabla, Fil, \Phi_{F_{\hat U}})$, where
\begin{itemize}
    \item [i)] $(H,Fil)$ is a filtered free $\sO_{\hat U}$-module
    with a basis $e_i$ of $Fil^i, 0\leq i\leq w$.
    \item [ii)] $\nabla$ is an integrable connection on $H$ satisfying the Griffiths'
    transversality:
    $$
    \nabla(Fil^i)\subset Fil^{i-1} \otimes \Omega_{\hat U}.
    $$
    \item [iii)] The relative Frobenius is an $\sO_{\hat U}$-linear morphism $\Phi_{F_{\hat U}}: F_{\hat U}^*H\to H$ with the strong
    $p$-divisible property: $\Phi_{F_{\hat U}}(F_{\hat U}^*Fil^i)\subset p^iH$
    and
\begin{equation}\label{strong divisibility non p-torsion case}
\sum_{i=0}^{w}\frac{\Phi_{F_{\hat U}}(F_{\hat U}^*Fil^i)}{p^i}=H.
\end{equation}
    \item [iv)] The relative Frobenius $\Phi_{F_{\hat U}}$ is horizontal
    with respect to the connection $F_{\hat U}^*\nabla$ on $F_{\hat U}^*H$ and
    $\nabla$ on $H$.
\end{itemize}
The filtered-freeness in i) means that the filtration $Fil$
on $H$ has a splitting such that each $Fil^i$ is a direct sum of
several copies of $\sO_{\hat U}$. Equivalently, $Fil$ is a finite exhaustive decreasing filtration of free $\sO_{\hat U}$-submodules which is split. The pull-back connection $F_{\hat
U}^*\nabla$ on $F_{\hat U}^*H$ is the connection defined by the formula
$$
F^*_{\hat U}\nabla(f\otimes e)=df\otimes e+f\cdot(dF_{\hat U}\otimes 1)(1\otimes \nabla(e)), \ f\in
\sO_{\hat U}, e\in H|_{\hat U}.
$$
The horizontal condition iv) is expressed by the commutativity of
the diagram
$$
 \xymatrix{
    F_{\hat U}^*H \ar[d]_{F_{\hat U}^*\nabla} \ar[r]^{\Phi_{F_{\hat U}}} &  H \ar[d]^{\nabla} \\
    F_{\hat U}^*H\otimes
\Omega_{\hat U}\ar[r]^{\Phi_{F_{\hat U}}\otimes Id} &  H\otimes
\Omega_{\hat U}. }
$$
As there is no canonical Frobenius liftings on $\hat U$, one must
know how the relative Frobenius changes under another Frobenius
lifting. This is expressed by a Taylor formula. Let $\hat U=\Spf R$
and $F: R\to R$ an absolute Frobenius lifting. Choose a system of \'{e}tale
local coordinates $\{t_1,\cdots,t_d\}$ of $U$ (namely fix an
\'{e}tale map $U\to \Spec(W[t_1,\cdots,t_d])$). Let $R'$ be any
$p$-adically complete, $p$-torsion free $W$-algebra, equipped with a
Frobenius lifting $F': R'\to R'$ and a morphism of $W$-algebras
$\iota: R\to R'$. Then the relative Frobenius $\Phi_{F'}:
F'^*(\iota^*H)\to \iota^*H$ is the composite
$$
F'^*\iota^*H\stackrel{\alpha}{\cong}
\iota^*F^*H\stackrel{\iota^*\Phi_{F}}{\longrightarrow} \iota^*H,
$$
where the isomorphism $\alpha$ is given by the formula:
$$
\alpha(e\otimes 1)=\sum_{\underline i}\nabla_{\partial}^{\underline
i}(e)\otimes \frac{z^{\underline i}}{\underline{i}!}.
$$
Here $\underline{i}=(i_1,\cdots,i_d)$ is a multi-index, and
$z^{\underline i}=z_1^{i_1}\cdots z_d^{i_d}$ with $z_i=F'\circ
\iota(t_i)-\iota\circ F(t_i), 1\leq i\leq d$, and
$\nabla_{\partial}^{\underline j}=\nabla_{\partial_{
t_1}}^{i_1}\cdots\nabla_{\partial_{t_d}}^{i_d}$.\\

Let $\sU=\{U_i\}_{i\in I}$ be an open affine covering of $X$ over $W$. For each $i$, let $F_{\hat{U}_i}$ be an absolute Frobenius lifting over $\hat U_i$. Then, after \cite[Theorem 2.3]{Fa1}, when $w\leq p-1$, the local categories $\{MF_{[0,w]}^{\nabla}(\hat U_i)\}_{i\in I}$ glue into the category $MF_{[0,w]}^{\nabla}(X)$. Its object is called a Fontaine module over $X/W$, and will be denoted again by a quadruple $(H,\nabla,Fil,\Phi)$, although the relative Frobenius $\Phi$ is only \emph{locally} defined and depends on a choice of absolute Frobenius lifting locally. For an open affine $U$ together with an absolute Frobenius lifting $F_{\hat U}$ over $\hat U$, the symbol $\Phi_{(U,F_{\hat U})}$ means the evaluation of $\Phi$ at $(U,F_{\hat U})$.

\begin{example}\label{geometric situation}
Let $f: Y\to X$ be a proper smooth morphism over $W$ of relative dimension
$w\leq p-2$ between smooth $W$-schemes. Assume that the relative Hodge cohomologies $R^if_*\Omega^j_Y, i+j=w$ have no torsion. By Theorem 6.2
\cite{Fa1},  the crystalline direct image $R^wf_{*}(\sO_{Y},d)$ is an object in $MF^{\nabla}_{[0,w]}(X/W)$.
\end{example}

The fundamental theorem of Fontaine-Laffaille (see \cite[Theorem 3.3]{FL} for $X=\Spec \ W$) and Faltings (see \cite[Theorem 2.6*]{Fa1}), which is a $p$-adic analogue of the Riemann-Hilbert correspondence over $\C$, reads:
\begin{theorem}[Fontaine-Laffaille-Faltings correspondence]\label{Faltings thm}
Notation as above. Assume furthermore $X$ is proper over $W$ and $w\leq p-2$. There exists a fully faithful contravariant functor ${\mathbf D}$ from the category $MF^{\nabla}_{[0,w]}(X/W)$ to the category of \'{e}tale local systems over $X_{K}$.
\end{theorem}
The image of the functor $\mathbf D$ is called crystalline sheaves of Hodge-Tate weight $n$ over $X_K$. We remind the reader that the functor $\mathbf D$ in \cite{Fa1} is covariant and its image is the category of dual crystalline sheaves. In the above theorem for torsion free Fontaine modules as well as its $p$-torsion analogue below, we actually use the dual of the functor $\mathbf D$. \\

{\itshape Variant 1: $p$-torsion.} A $p$-torsion Fontaine module is formulated in a similar way. In fact, the previous category $MF^{\nabla}_{[0,w]}(X/W)$ is the $p$-adic limit of its torsion variant (see c)-d). Ch. II \cite{Fa1}). The major modification in the $p$-torsion case occurs to the formulation of strong $p$-divisibility. Note that Equation (\ref{strong divisibility non p-torsion case}) does not make sense in the $p$-torsion case. For each natural number $n$, a strict $p^n$-torsion Fontaine module $(H,\nabla,Fil,\Phi)$ means the following: $H$ is a \emph{finitely generated} $\sO_{X_n}$-module; $Fil$ is a finite exhaustive decreasing filtration of $\sO_{X_n}$-submodules on $H$ satisfying Griffiths' transversality with respect to an integrable connection $\nabla$; $\Phi$ is strongly $p$-divisible, namely, the evaluation of $\Phi$ at $(U,F_{\hat U})$ is an $\sO_{U_n}$-isomorphism ($U_n:=U\otimes \Z/p^n\Z, F_{U_n}:=F_{\hat U}\otimes \Z/p^n\Z$):
$$
\Phi_{(U_n,F_{U_n})}: F_{U_n}^*\tilde H|_{U_n}\cong H|_{U_n},
$$
where $\tilde H$ is the quotient $\oplus_{i=0}^{w}Fil^i/\sim$ with $x\sim py$ for any $x\in Fil^{i}$ and $y$ the image of $x$ under the natural inclusion $Fil^i\hookrightarrow Fil^{i-1}$; the horizontal property for $\Phi$ is formulated as in the non-torsion case, which means explicitly the following commutative diagram:
$$
 \xymatrix{
     F_{U_n}^*\tilde H|_{U_n} \ar[rr]^{\Phi_{(U_n,F_{U_n})}} \ar[d]_{F_{U_n}^*\tilde \nabla}& &  H|_{U_n} \ar[d]^{\nabla} \\
        F_{U_n}^*\tilde H|_{U_n}\otimes
\Omega_{U_n}\ar[rr]^{\Phi_{(U_n,F_{U_n})}\otimes Id} &&  H|_{U_n}\otimes
\Omega_{U_n}, }
$$
where $\tilde \nabla$ is the integral $p$-connection (see Definition \ref{def of p-connection}) over $\tilde H$ induced by $\nabla$ and $F_{U_n}^*\tilde \nabla$ is similarly defined as $F^*_{\hat U}\nabla$ in the non $p$-torsion case by replacing $\nabla$ resp. $dF_{\hat U}$ in the composite therein with $\tilde \nabla$ resp. $\frac{dF_{U_n}}{p}$ (see Formula (\ref{formula for connection}) for a local expression). Let $MF^{\nabla}_{[0,w],n}(X/W)$ denote the category of strict $p^m$-torsion Fontaine modules. The Fontaine-Laffaille-Faltings correspondence as given above is achieved by taking the $p$-adic limit of its $p$-torsion analogue.

\begin{remark}
Using Fitting ideals, Faltings \cite[Theorem 2.1]{Fa1} shows that $(H,Fil)$ is indeed locally filtered free. A slight generalization obtained by A. Ogus using a different method is given in \cite[Theorem 5.3.3]{O}. Note also that, in the formulation of the category $MF^{\nabla}_{[0,w],n}(X/W)$, one actually requires only the existence of a model $X_{n+1}$ over $W_{n+1}$. Although objects of this category are defined over $X_n$, the horizontality of the relative Frobenius uses the operator $\frac{dF_{U_{n+1}}}{p}$, where $F_{U_{n+1}}$ is an absolute Frobenius lifting on an open affine $U_{n+1}\subset X_{n+1}$. Also, the existence of $X_{n+1}$ is required for the sake of the transition of two evaluations of the relative Frobenius via the Taylor formula. Therefore, this category requires only the existence of a smooth $W_{n+1}$-scheme $X_{n+1}$. In this case, we shall denote it by $MF^{\nabla}_{[0,w]}(X_{n+1}/W_{n+1})$.
\end{remark}

{\itshape Variant 2: extra endomorphism.} For our purpose, we need to introduce the category $MF^{\nabla}_{[0,w],f}(X/W)$ of Fontaine modules with endomorphism structure $W(\F_{p^f})$ for each $f\in \N$. It consists of five tuples  $(H,\nabla,Fil,\Phi,\iota)$,
where $(H,\nabla,Fil,\Phi)$ is a torsion-free Fontaine module and
$$
\iota: W(\F_{p^f})\hookrightarrow \End_{MF}(H,\nabla,Fil,\Phi)
$$
is an embedding of $\Z_p$-algebras. A morphism of this category is a morphism in
$MF^{\nabla}_{[0,w]}(X/W)$ respecting the endomorphism structure $\iota$. Clearly, the category for $f=1$ is nothing but the category of Fontaine modules. We introduce similarly its $p$-torsion counterpart $MF^{\nabla}_{[0,w],n,f}(X/W)$ (and $MF^{\nabla}_{[0,w],f}(X_{n+1}/W_{n+1})$), where the extra endomorphism structure is given by an embedding of $\Z/p^n\Z$-algebras:
$$
\iota: W_n(\F_{p^f}) \hookrightarrow \End_{MF}(H,\nabla,Fil,\Phi).
$$

\section{Higgs correspondence in positive characteristic}
Let us begin with the definitions of a (pre)periodic Higgs-de Rham flow and a (pre)periodic Higgs module. Let $X_1$ be a smooth variety over $k$ and $X_2$ a $W_2$-lifting of $X_1$.
\begin{definition}\label{definition of periodic flow}
A preperiodic Higgs-de Rham flow over $X_1$ (with respect to the given $W_2$-lifting $X_2$) is a tuple $(E,\theta,Fil_0,\cdots,Fil_{e+f-1},\phi)$, where $e\geq 0$ and $f\geq 1$ are two integers, $(E,\theta)$ is a Higgs module in the category $HIG_{p-1}(X_1)$, $Fil_i, 0\leq i\leq e+f-1$ is a Griffiths transverse filtration on $C_1^{-1}(E_i,\theta_i)$ where $(E_0,\theta_0)=(E,\theta)$ and
$$(E_i,\theta_i):=Gr_{Fil_{i-1}}(H_{i-1},\nabla_{i-1}), 1\leq i\leq e+f$$ is inductively
defined, and $\phi$ is an isomorphism of graded Higgs modules
$$
(E_{e+f},\theta_{e+f})\cong (E_e,\theta_e).
$$
It is said to be periodic of period $f$ (or $f$-periodic) if the integer $e$ in above is zero. The $(E_i,\theta_i)$s (resp. $(H_i,\nabla_i)$s) are called the Higgs (resp. de-Rham) terms of the flow. A Higgs module $(E,\theta)$ over $X_1$ is said to be (pre)periodic if there exists a (pre)periodic Higgs-de Rham flow with the leading Higgs term $(E,\theta)$.
\end{definition}
One may complete a preperiodic Higgs-de Rham flow over $X_1$ into a Higgs-de Rham flow in a natural way: note that the isomorphism $\phi$ induces the isomorphism $$C_1^{-1}(\phi): C_1^{-1}(E_{e+f},\theta_{e+f})\cong C_1^{-1}(E_e,\theta_e),$$ and therefore one obtains naturally the filtration $Fil_{e+f}$ on $C_1^{-1}(E_{e+f},\theta_{e+f})$ by pulling back $Fil_{e}$ via the isomorphism. Then continue successively.\\

This section aims to establish the Higgs correspondence between the category of periodic Higgs-de Rham flows over $X_1$ and the category of strict $p$-torsion Fontaine modules with extra endomorphism structure. Let us introduce the category $HDF_{w,f}(X_2/W_2)$ as follows : its object is given by a periodic Higgs-de Rham flow $(E,\theta,Fil_0,Fil_1,\cdots,Fil_{f-1},\phi)$ over $X_1$ such that each filtration $Fil_i$ is of level $\leq w$. Note that $(E,\theta)$ in a periodic Higgs-de Rham must be a graded Higgs module.  A morphism between two objects is a morphism of graded Higgs modules respecting the additional structures. As an illustration, we
explain a morphism in the category $HDF_{w,1}(X_2/W_2)$ of one-periodic Higgs-de Rham flow in detail.
Let $(E_i,\theta_i,Fil_i,\phi_i), i=1,2$ be two objects in the category. Then a morphism
$$f: (E_1,\theta_1,Fil_1,\phi_1)\to (E_2,\theta_2,Fil_2,\phi_2)$$ is given by a morphism of graded Higgs modules
$$
f: (E_1,\theta_1)\to (E_2,\theta_2)
$$
such that the induced morphism of flat modules (by the functoriality of $C_1^{-1}$)
$$C_1^{-1}(f): C_1^{-1}(E_1,\theta_1)\rightarrow C_1^{-1}(E_2,\theta_2)$$
preserves the filtrations, and moreover the induced
morphism of graded Higgs modules is compatible with $\phi$s,
that is, the following diagram of natural morphisms commutes:
\begin{equation*}\label{eq1}
 \begin{CD}
 Gr_{Fil_1}C_1^{-1}(E_1,\theta_1)@>\phi_1>>(E_1,\theta_1)\\
 @VGrC_1^{-1}(f)VV@  VVfV\\
  Gr_{Fil_2}C_1^{-1}(E_2,\theta_2)@>\phi_2>>(E_2,\theta_2).
 \end{CD}
\end{equation*}
Recall that $MF^{\nabla}_{[0,w],f}(X_2/W_2)$ is the category of strict $p$-torsion Fontaine modules with extra endomorphism $\F_{p^f}$. The Higgs correspondence in positive characteristic is the following
\begin{theorem}\label{correspondence in the type (0,f) case}
Notation as above. Let $w\leq p-1$ and $f$ be a natural number. Then there is an equivalence of categories between the category $MF^{\nabla}_{[0,w],f}(X_2/W_2)$ and the category $HDF_{w,f}(X_2/W_2)$.
\end{theorem}

We take an open covering $\{U_i\}$ of $X_2/W_2$ consisting of open affine subsets which are smooth over $W_2$, together with an absolute Frobenius
lifting $F_{U_i}$ on each $U_i$. By modulo $p$, one obtains an open affine covering $\{U_{i,1}\}$ for $X_1$. We show first
a special case of the theorem, namely the $f=1$ case.
\begin{proposition}\label{correspondence in the type (0,1) case}
There is an equivalence of categories between the category of strict $p$-torsion Fontaine modules and the category of one-periodic Higgs-de Rham flows over $X_1$.
\end{proposition}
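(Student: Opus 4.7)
The plan is to construct explicit quasi-inverse functors $\mathcal F$ and $\mathcal G$ between the two categories.

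For $\mathcal F$, given a strict $p$-torsion Fontaine module $(H,\nabla,Fil,\Phi)$, I set $(E,\theta):=Gr_{Fil}(H,\nabla)$ and use the Frobenius datum $\Phi$ to manufacture a canonical isomorphism of flat modules $\Psi_\Phi:C_1^{-1}(E,\theta)\xrightarrow{\sim}(H,\nabla)$. This is Proposition~3.1 of \cite{LSZ}, already invoked in the proof of Proposition~\ref{geometric case}. Transporting $Fil$ along $\Psi_\Phi$ produces a Hodge filtration $Fil_0$ on $C_1^{-1}(E,\theta)$, and $\phi:=Gr(\Psi_\Phi)$ is then the tautological identification of the gradings; the resulting tuple $(E,\theta,Fil_0,\phi)$ is a one-periodic Higgs-de Rham flow in the sense of the definitions above. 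Functoriality on morphisms follows from the functoriality of $Gr_{Fil}$ together with the naturality of $\Psi_\Phi$ with respect to morphisms of Fontaine modules.

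For $\mathcal G$, given $(E,\theta,Fil_0,\phi)$, I set $(H,\nabla):=C_1^{-1}(E,\theta)$ with filtration $Fil:=Fil_0$, and must construct the Frobenius structure $\Phi$ out of $\phi$. The approach is local. On each small affine $U_{i,1}$ of the chosen cover, with its Frobenius lift $F_{\hat U_i}$, the inverse Cartier transform admits an explicit model as a twisted Frobenius pullback, so composing the inverse of this local model with $\phi$ furnishes a local relative Frobenius $\Phi_i$. To glue the $\Phi_i$ to a global $\Phi$, I would invoke the canonical comparison isomorphisms intrinsic to $C_1^{-1}$, which measure the discrepancy between two Frobenius lifts on an overlap $U_{i,1}\cap U_{j,1}$. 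Horizontality of $\Phi$ with respect to $\nabla$ is inherited from the local models, while strong $p$-divisibility in the sense of Fontaine-Laffaille-Faltings is forced by the grading shift built into $\phi$, i.e. the fact that $\phi$ restricts on the $i$-th graded piece to a map into $E^{i,n-i}$ rather than a higher step of the filtration.

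Quasi-inverseness is then essentially tautological: $\mathcal G\circ\mathcal F\cong\mathrm{id}$ reduces once more to Proposition~3.1 of \cite{LSZ}, which guarantees that the Frobenius recovered from $\phi$ in the local models agrees with the original $\Phi$, while $\mathcal F\circ\mathcal G\cong\mathrm{id}$ is built into the construction of $\mathcal G$. The main obstacle I expect is the gluing step for $\Phi$, together with the verification of strong $p$-divisibility: one must check that the local Frobenii $\Phi_i$ coming from distinct Frobenius lifts differ on overlaps by exactly the canonical comparison map hard-wired into the definition of $C_1^{-1}$, so that they descend to a well-defined global morphism, and that the resulting $\Phi$ satisfies the Fontaine-Laffaille compatibility with $Fil$. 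Both points come down to a careful comparison of the Taylor/exponential formula defining $C_1^{-1}$ on a small affine with the Fontaine-Laffaille formula for the divided Frobenius on a filtered flat module; the assumption $n\le p-1$ enters here to guarantee the relevant truncated exponentials are well-defined modulo $p$.
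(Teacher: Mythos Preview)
Your outline follows the paper's approach closely: the paper builds the two functors $\mathcal{GR}$ and $\mathcal{IC}$ (your $\mathcal F$ and $\mathcal G$) exactly as you describe, and your construction of $\mathcal F$ matches Lemma~\ref{from Faltings to Higgs the fixed point case} verbatim. The quasi-inverseness is also handled as you suggest.

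There is one genuine misstatement and one missing idea in your treatment of $\mathcal G$. First, strong $p$-divisibility has nothing to do with a ``grading shift built into $\phi$''; in the strict $p$-torsion case it simply means that each local $\Phi_i$ is an \emph{isomorphism}, and this is immediate once you write down the correct formula. The paper's formula is cleaner than what you describe: since locally $H|_{U_{i,1}} = F_{U_{i,1}}^*E|_{U_{i,1}}$ by the very construction of $C_1^{-1}$, one may take
\[
\Phi_{(U_i,F_{\hat U_i})} := F_{U_{i,1}}^*\phi : F_{U_{i,1}}^*Gr_{Fil}H|_{U_{i,1}} \longrightarrow F_{U_{i,1}}^*E|_{U_{i,1}} = H|_{U_{i,1}},
\]
which is an isomorphism because $\phi$ is. Second, the single fact that makes both the gluing \emph{and} the horizontality work is that $\phi$ is a morphism of Higgs bundles, i.e.\ $\phi\circ\theta' = \theta\circ\phi$ where $\theta' = Gr_{Fil}\nabla$. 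The paper's verification of the Taylor-formula compatibility (Condition~(3)) and of horizontality (Lemma~\ref{horizontal property}) are both short diagram chases that reduce to exactly this identity; your proposal does not isolate it, and ``horizontality is inherited from the local models'' is not an argument. Once you have the formula $\Phi_i = F_{U_{i,1}}^*\phi$ and the Higgs-compatibility of $\phi$ in hand, both verifications become straightforward.
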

For simplicity, we denote $MF$ for $MF^{\nabla}_{[0,w],1}(X_2/W_2)$ and $HDF$ for $HDF_{w,1}(X_2/W_2)$. In the following paragraph, we shall construct two functors
$$\mathcal{GR}: MF\to HDF, \quad \mathcal{IC}: HDF\to MF,
$$
and then show they are quasi-inverse to each other. For an $(H,\nabla,Fil,\Phi)\in MF$, let $(E,\theta):=Gr_{Fil}(H,\nabla)$ be the associated graded Higgs module. The following lemma gives the first functor.
\begin{lemma}\label{from Faltings to Higgs the fixed point case}
There is a filtration $Fil_{\exp}$ on $C_1^{-1}(E,\theta)$ together
with an isomorphism of graded Higgs modules
$$
\phi_{\exp}: Gr_{Fil_{exp}}(C_1^{-1}(E,\theta))\cong (E,\theta),
$$
which is induced by the filtration $Fil$ and the relative
Frobenius $\Phi$.
\end{lemma}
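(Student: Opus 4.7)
The strategy is to exploit the natural isomorphism $\Psi: C_1^{-1}(E,\theta)\xrightarrow{\sim}(H,\nabla)$ available for a strict $p$-torsion Fontaine module, which is precisely Proposition 3.1 of \cite{LSZ} already invoked in the proof of Proposition \ref{geometric case}. Once $\Psi$ is in hand, one defines $Fil_{\exp}:=\Psi^{-1}(Fil)$ on $C_1^{-1}(E,\theta)$ and lets $\phi_{\exp}$ be the induced isomorphism on gradings, whose codomain is canonically $Gr_{Fil}(H,\nabla)=(E,\theta)$. The content of the lemma therefore reduces to extracting $\Psi$ from the Fontaine module structure $(Fil,\Phi)$ and verifying that it is compatible with the Higgs and flat structures.

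I would unpack this locally on the fixed affine cover $\{U_i\}$ with its chosen Frobenius liftings $F_{\hat U_i}$. On each $U_{i,1}$, the Ogus--Vologodsky construction realizes $C_1^{-1}(E,\theta)|_{U_{i,1}}$ as $F_{U_{i,1}}^*E$ equipped with a connection that differs from $\nabla_{can}$ by an explicit term built from $\theta$ and $dF_{\hat U_i}/p$. On the other hand, the strong $p$-divisibility of $\Phi$ supplies an $\sO$-linear map $\Phi_i: F_{U_{i,1}}^*E\to H|_{U_{i,1}}$, defined on each summand $F_{U_{i,1}}^*(Fil^j/Fil^{j+1})$ as $p^{-j}\Phi|_{Fil^j}$; strong divisibility forces $\Phi_i$ to be an isomorphism, while horizontality of $\Phi$ combined with $\theta=Gr_{Fil}\nabla$ forces $\Phi_i$ to intertwine the Ogus--Vologodsky connection with $\nabla$. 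Setting $\Psi_i:=\Phi_i$, the pullback filtration $\Psi_i^{-1}(Fil|_{U_{i,1}})$ visibly has $E|_{U_{i,1}}$ as its associated graded Higgs module.

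The main obstacle is gluing: the $\Psi_i$ depend on the chosen Frobenius liftings and one must verify that on overlaps $U_i\cap U_j$ the two descriptions agree. This is precisely the compatibility encoded in the globalization of $C_1^{-1}$ in \cite{OV}: the transition automorphism between the local realizations $F_{U_{i,1}}^*E$ and $F_{U_{j,1}}^*E$ is a Taylor-type expansion in $\theta$ whose exponential flavor is what gives $Fil_{\exp}$ its name, and the same expansion governs the transition from $\Phi_i$ to $\Phi_j$ via the horizontality of the global $\Phi$ with respect to $\nabla$. Granting this matching the $\Psi_i$ patch to a global isomorphism $\Psi$, yielding the global filtration $Fil_{\exp}$ and the isomorphism $\phi_{\exp}$; Griffiths transversality for $Fil_{\exp}$ is then automatic as the $\Psi$-pullback of the corresponding property of $Fil$.
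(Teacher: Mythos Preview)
Your proposal is correct and follows essentially the same approach as the paper: both invoke the isomorphism $C_1^{-1}(E,\theta)\cong (H,\nabla)$ induced by the relative Frobenius (the paper cites \cite[Proposition 5]{LSZ}, you cite \cite[Proposition 3.1]{LSZ}), pull back $Fil$ along it to define $Fil_{\exp}$, and take the associated graded map as $\phi_{\exp}$. Your additional local unpacking of $\Psi$ and the Taylor-formula gluing is faithful to how that cited isomorphism is actually built, but the paper simply treats it as a black box and gives the two-line argument.
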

\begin{proof}
By \cite[Proposition 1.4]{LSZ}, the relative Frobenius
induces an isomorphism of flat modules
$$
\tilde \Phi: C_1^{-1}(E,\theta)\cong (H,\nabla).
$$
So we define $Fil_{exp}$ on $C_1^{-1}(E,\theta)$ to be the inverse
image of $Fil$ on $H$ by $\tilde \Phi$. It induces tautologically an
isomorphism of graded Higgs modules
$$
\phi_{\exp}=Gr(\tilde \Phi): Gr_{Fil_{exp}}(C_1^{-1}(E,\theta))\cong
(E,\theta).
$$
\end{proof}
Next, the functor $C_1^{-1}$ induces the second functor $\mathcal{IC}$ as follows. Given an object $(E,\theta,Fil,\phi)\in HDF$, we define the triple by
$$(H,\nabla,Fil)=(C_1^{-1}(E,\theta),Fil).$$ What remains is to
produce a relative Frobenius $\Phi$ from the $\phi$. This is the most technical point of the whole proof. Following Faltings \cite[Ch. II. d)]{Fa1}, it suffices to give for each pair
$(U_i,F_{ U_i})$ an $\sO_{U_{i,1}}$-morphism
$$\Phi_{(U_i,F_{ U_i})}: F_{U_{i,1}}^*Gr_{Fil}H|_{U_{i,1}}\to H|_{U_{i,1}},$$
where $F_{U_{i,1}}$  is the absolute Frobenius of $U_{i,1}$, satisfying the following conditions:
\begin{enumerate}
    \item strong $p$-divisibility, that is, $\Phi_{(U_i,F_{ U_i})}$ is an isomorphism,
    \item horizontal property,
    \item over each $U_{ij,1}:=U_{i,1}\cap U_{j,1}$, $\Phi_{(U_i,F_{ U_i})}$ and $\Phi_{(U_j,F_{ U_j})}$ are related via the Taylor formula. Precisely, if the gluing map for $H_{i}:=H|_{U_{i,1}}$ and $H_j:=H|_{U_{j,1}}$ is $G_{ij}$, then we shall have the following commutative diagram:
     $$   \xymatrix{
          F^*_{U_{i,1}}Gr_{Fil}H_i|_{U_{ij,1}} \ar[d]_{\widetilde{G}_{ij}} \ar[rr]^{\Phi_{(U_i,F_{ U_i})}|_{U_{ij,1}}}& & H_i|_{U_{ij,1}} \ar[d]^{G_{ij}} \\
          F^*_{U_{i,1}}Gr_{Fil}H_j|_{U_{ij,1}} \ar[d]_{\varepsilon_{ij}}  && H_j|_{U_{ij,1}} \ar[d]^{Id} \\
          F^*_{U_{j,1}}Gr_{Fil}H_j|_{U_{ij,1}} \ar[rr]^{\Phi_{(U_j,F_{ U_j})}|_{U_{ij,1}}} && H_j|_{U_{ij,1}},  }
        $$
where  $\widetilde{G}_{ij}$ denotes the obvious map induced by $G_{ij}$, and $\varepsilon_{ij}$ is defined by the Taylor formula which is given by the following expression:
\begin{equation}\label{Taylor_formula}
e\otimes 1\to e\otimes 1+\sum_{|\underline{k}|=1}^w(\theta_{\partial}')^{\underline{k}}(e)\otimes\frac{z^{\underline{k}}}{p^{|\underline{k}|}\underline{k}!},
\end{equation}
where $\theta'$ denotes the Higgs field  of  $Gr_{Fil}(H_j,\nabla)$. Here we take a system of \'{e}tale local coordinates $\{\tilde{t}_1,\cdots,\tilde{t}_d\}$ of $U_{ij,2} $, which induces a system of  \'{e}tale local coordinates  $\{t_1,\cdots,t_d\}$ on $U_{ij,1} $, and $\underline {k}=(k_1,\cdots,k_d)$ is a multi-index, $z^{\underline{k}}=z_1^{k_1}\cdots z_d^{k_d}$ with $$z_k=F^*_{U_{i,2}}(\tilde{t}_k)-F^*_{U_{j,2}}(\tilde{t}_k),$$ and $(\theta')^{\underline{k}}_{\partial}=(\theta'_{\partial_{t_1}})^{k_1}\cdots (\theta'_{\partial_{t_d}})^{k_d}$.
\end{enumerate}
Recall that $H=\{H_i:=F^*_{U_{i,1}}E|_{U_{i,1}},G_{ij}\}_{i\in I}$, where $G_{ij}$ has similar expression  of  $\varepsilon_{ij}$ as \ref{Taylor_formula} (see \cite[The poof of Proposition 1.4]{LSZ}) :
\[
G_{ij}(e\otimes 1) = e\otimes 1  + \sum_{|\underline{k}|=1}^w\theta_{\partial}^{\underline{k}}(e)\otimes\frac{z^{\underline{k}}}{p^{|\underline{k}|}\underline{k}!}.
\]
We define
$$
\Phi_{(U_i,F_{ U_i})}=F_{U_{i,1}}^*\phi: F_{U_{i,1}}^*Gr_{Fil}H|_{U_{i,1}}\to F_{U_{i,1}}^*E|_{U_{i,1}} .
$$
By construction, $\Phi_{(U_i,F_{ U_i})}$ is strongly
$p$-divisible (this is Condition (1)). As $\phi$ is globally defined, we have the following diagram:
\[\begin{CD}
Gr_{Fil}H_i|_{U_{ij,1}}@>\phi_i|_{U_{ij,1}}>>E|_{U_{ij,1}}\\
@VGr(G_{ij})VV@ VVIdV\\
Gr_{Fil}H_j|_{U_{ij,1}}@>\phi_j|_{U_{ij,1}}>>E|_{U_{ij,1}}.\\
\end{CD}
\]
Pulling back the above diagram via $F^*_{U_{i,1}}$ , we get the following diagram:
\[\begin{CD}
F^*_{U_{i,1}}Gr_{Fil}H_i|_{U_{ij,1}}@>F^*_{U_{i,1}}(\phi_i)|_{U_{ij,1}}>>F^*_{U_{i,1}}E|_{U_{ij,1}}\\
@V\widetilde{G_{ij}}VV@ VVIdV\\
F^*_{U_{i,1}}Gr_{Fil}H_j|_{U_{ij,1}}@>F^*_{U_{i,1}}(\phi_j)|_{U_{ij,1}}>>F^*_{U_{i,1}}E|_{U_{ij,1}}.\\
\end{CD}
\]
Then we extend it to the following diagram:
\[
\begin{CD}
F^*_{U_{i,1}}Gr_{Fil}H_i|_{U_{ij,1}}@>F^*_{U_{i,1}}(\phi_i)|_{U_{ij,1}}>>F^*_{U_{i,1}}E|_{U_{ij,1}}\\
@V\widetilde{G_{ij}}VV@ VVIdV\\
F^*_{U_{i,1}}Gr_{Fil}H_j|_{U_{ij,1}}@>F^*_{U_{i,1}}(\phi_j)|_{U_{ij,1}}>>F^*_{U_{i,1}}E|_{U_{ij,1}}\\
@V\varepsilon_{ij} VV @VV  G_{ij} V\\
F^*_{U_{j,1}}Gr_{Fil}H_j|_{U_{ij,1}}@>F^*_{U_{j,1}}(\phi_j)|_{U_{ij,1}}>>F^*_{U_{j,1}}E|_{U_{ij,1}}\\
\end{CD}
\]
As $\phi\circ \theta'= \theta\circ \phi$ , we have for any local section $e$ of $Gr_{Fil}H_j|_{U_{ij,1}}$,
\begin{eqnarray*}
G_{ij}\circ F^*_{U_{i,1}}(\phi_j)(e\otimes 1)&=&\phi_j(e)\otimes 1+\sum_{|\underline{k}|=1}^w\theta_{\partial}^{\underline{k}}(\phi_j(e))\otimes\frac{z^{\underline{k}}}{p^{|\underline{k}|}\underline{k}!}\\
&=&\phi_j(e)\otimes 1+\sum_{|\underline{k}|=1}^w\phi_j((\theta')_{\partial}^{\underline{k}}(e))\otimes\frac{z^{\underline{k}}}{p^{|\underline{k}|}\underline{k}!}\\
&=&F^*_{U_{j,1}}(\phi_j)\circ\varepsilon_{ij}(e\otimes 1).
\end{eqnarray*}
So the lower square of the last diagram is commutative (this is Condition (3)). What remains to show is Condition (2).
\begin{lemma}\label{horizontal property}
Each $\Phi_{(U_i,F_{ U_i})}$ is horizontal with respect
to $\nabla$.
\end{lemma}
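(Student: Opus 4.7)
The plan is to identify the locally defined maps $\Phi_{(U_i,F_{\hat U_i})}=F_{U_{i,1}}^{*}\phi$ with the restriction to $U_{i,1}$ of the globally defined morphism $C_1^{-1}(\phi)$ of flat modules, which is horizontal by functoriality. To carry this out, first I would recall from the construction of the inverse Cartier transform in \cite{OV} (and the local description used in \cite{LSZ}) that, over an affine $U_{i,1}$ equipped with the Frobenius lift $F_{\hat U_i}$, for any nilpotent Higgs module $(E',\theta')\in HIG_{p-1}(X_1)$ one has a canonical identification
\[
C_1^{-1}(E',\theta')\big|_{U_{i,1}}\;\cong\;\bigl(F_{U_{i,1}}^{*}E'|_{U_{i,1}},\;\nabla_{\mathrm{can}}+\zeta_{i}\circ F_{U_{i,1}}^{*}\theta'\bigr),
\]
where $\zeta_i=\tfrac{1}{p}\,dF_{\hat U_i}\colon F_{U_{i,1}}^{*}\Omega_{U_{i,1}}\to\Omega_{U_{i,1}}$ is the operator produced by the Frobenius lift. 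The gluing of these local models along the overlaps $U_{ij,1}$ by the Taylor-series maps $\varepsilon_{ij}$ is precisely the data that recovers the global flat module $C_1^{-1}(E',\theta')$.

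Next I would apply this to the two Higgs modules relevant here: $(E,\theta)$ and $(E_1,\theta_1)=Gr_{Fil}(H,\nabla)$. For the former we obtain the local model of $(H,\nabla)=C_1^{-1}(E,\theta)$, while for the latter we obtain the local model of $C_1^{-1}(E_1,\theta_1)$, which is what sits at $F_{U_{i,1}}^{*}Gr_{Fil}H|_{U_{i,1}}$ once it is regarded as the source of a candidate Frobenius structure on $(H,\nabla)$. Because the isomorphism $\phi\colon (E_1,\theta_1)\xrightarrow{\sim}(E,\theta)$ is a morphism of Higgs modules, i.e.\ $\phi\circ\theta_1=\theta\circ\phi$, a direct computation with the local formula yields
\[
\bigl(\nabla_{\mathrm{can}}+\zeta_i\circ F_{U_{i,1}}^{*}\theta\bigr)\circ F_{U_{i,1}}^{*}\phi
\;-\;F_{U_{i,1}}^{*}\phi\circ\bigl(\nabla_{\mathrm{can}}+\zeta_i\circ F_{U_{i,1}}^{*}\theta_1\bigr)
\;=\;\zeta_i\circ F_{U_{i,1}}^{*}(\theta\circ\phi-\phi\circ\theta_1)\;=\;0,
\]
using that $\nabla_{\mathrm{can}}$ commutes with any Frobenius pullback of an $\sO_{X_1}$-linear map. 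This is exactly the horizontality of $\Phi_{(U_i,F_{\hat U_i})}$ with respect to $\nabla$.

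Conceptually, this is nothing but the statement that $C_1^{-1}$ is a functor: the morphism $F_{U_{i,1}}^{*}\phi$ is, under the identification above, the restriction to $U_{i,1}$ of the flat-module morphism $C_1^{-1}(\phi)\colon C_1^{-1}(E_1,\theta_1)\to C_1^{-1}(E,\theta)=(H,\nabla)$, and morphisms of flat modules are by definition horizontal. I would therefore phrase the argument in two sentences once the local model of $C_1^{-1}$ is recalled: horizontality on each $U_{i,1}$ is functoriality of $C_1^{-1}$ applied to the Higgs morphism $\phi$. The main obstacle, if any, is purely bookkeeping: one must check that the connection appearing implicitly on the source $F_{U_{i,1}}^{*}Gr_{Fil}H|_{U_{i,1}}$ in the statement of the horizontality condition is indeed the one coming from the local description of $C_1^{-1}(E_1,\theta_1)$, and that under the chosen Frobenius lift $F_{\hat U_i}$ this matches the formula used above; this is already implicit in the compatibility diagram via $\varepsilon_{ij}$ stated as Condition (3) in the construction, so no further work is needed.
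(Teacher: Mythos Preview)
Your proposal is correct and follows essentially the same approach as the paper: both reduce horizontality to the compatibility of $\phi$ with the Higgs fields, pull back by Frobenius, and apply $\tfrac{dF_{\hat U_i}}{p}$. The paper carries this out as an explicit diagram chase, while you package the same computation as functoriality of $C_1^{-1}$ under the local identification $C_1^{-1}(E',\theta')|_{U_{i,1}}\cong (F_{U_{i,1}}^*E',\,\nabla_{\mathrm{can}}+\zeta_i\circ F_{U_{i,1}}^*\theta')$; the underlying calculation is identical.
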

\begin{proof}
Put $\tilde H=Gr_{Fil}H$, $\theta'=Gr_{Fil}\nabla$,
$\Phi_i=\Phi_{(U_i,F_{ U_i})}$ and $F_{U_{i,1}}$ the absolute
Frobenius over $U_{i,1}$. Following Faltings \cite[Ch. II.. d)]{Fa1}, it
suffices to show the commutativity of the following diagram
$$
\CD
  F^*_{U_{i,1}}\tilde H|_{U_{i,1}} @>\Phi_i>> H|_{U_{i,1}} \\
  @V F_{ U_i}^*\nabla VV @V \nabla VV  \\
  F^*_{U_{i,1}}\tilde H|_{U_{i,1}}\otimes \Omega_{U_{i,1}}@>\Phi_i\otimes Id>> H|_{U_{i,1}}\otimes
  \Omega_{U_{i,1}},
\endCD
$$
where $F_{ U_i}^*\nabla$ is a connection induced by $\frac{dF_{{U}_i}}{p}(F^*_{U_{i,1}}\theta')$, i.e. the composite of
$$
F^*_{U_{i,1}}\tilde{H}|_{U_{i,1}}\stackrel{F^*_{U_{i,1}}(\theta')}{\longrightarrow}F^*_{U_{i,1}}\tilde{H}|_{U_{i,1}}\otimes
F^*_{U_{i,1}}\Omega_{U_{i,1}} \stackrel{Id\otimes\frac{dF_{ U_i}}{p}}{\longrightarrow}F^*_{U_{i,1}}\tilde{H}|_{U_{i,1}}\otimes
\Omega_{U_{i,1}}.
$$
Thus it is to show the commutativity of the next diagram:
$$
\CD
  F^*_{U_{i,1}}\tilde H|_{U_{i,1}} @>F^*_{U_{i,1}}(\phi)>>F^*_{U_{i,1}}E|_{U_{i,1}} \\
  @V \frac{dF_{{U}_i}}{p}(F^*_{U_{i,1}}(\theta')) VV @V \frac{dF_{ U_i}}{p}(F^*_{U_{i,1}}(\theta)) VV  \\
  F^*_{U_{i,1}}\tilde H|_{U_{i,1}}\otimes \Omega_{U_{i,1}}@>F^*_{U_{i,1}}(\phi)\otimes Id>>F^*_{U_{i,1}}E|_{U_{i,1}}\otimes
  \Omega_{U_{i,1}}.
\endCD
$$
As $\phi$ is a morphism of graded Higgs modules, one has the following
commutative diagram:
\begin{equation*}
\begin{CD}
 \tilde{H}|_{U_ i}  @>\phi >> E|_{U_{i,1}}\\
 @V\theta' VV       @VV\theta V\\
 \tilde{H}|_{U_{i,1}}\otimes\Omega_{U_{i,1}}@>\phi\otimes Id>>  E|_{U_{i,1}}\otimes
 \Omega_{U_{i,1}}.
 \end{CD}
\end{equation*}
The pullback via $F^*_{U_{i,1}}$ of the above diagram yields
commutative diagrams:
$$
\xymatrix{
 F^*_{U_{i,1}}\tilde{H}|_{U_{i,1}}\ar[d]_{F^*_{U_{i,1}}(\theta')} \ar[r]^{F^*_{U_{i,1}}(\phi)}
                & F^*_{U_{i,1}}E|_{U_{i,1}} \ar[d]_{F^*_{U_{i,1}}(\theta)} \ar@/^/[ddr]^{\frac{dF_{ U_i}}{p}(F^*_{U_{i,1}}(\theta)) }  \\
  F^*_{U_{i,1}}\tilde{H}|_{U_{i,1}}\otimes F^*_{U_{i,1}}\Omega_{U_{i,1}}\ar[r]^{F^*_{U_{i,1}}(\phi)\otimes Id} \ar@/_/[drr]_{F^*_{U_{i,1}}(\phi)\otimes \frac{dF_{ U_i}}{p}}
                &  F^*_{U_{i,1}}E|_{U_{i,1}}\otimes F^*_{U_{i,1}}\Omega_{U_{i,1}} \ar@{>}[dr]|-{ Id\otimes\frac{dF_{ U_i}}{p}}            \\
                &               & F^*_{U_{i,1}}E|_{U_{i,1}}\otimes \Omega_{U_{i,1}}.              }
$$
Chasing the outside of the above diagram gives the required commutativity.
\end{proof}
Now we can prove Proposition \ref{correspondence in the type (0,1) case}.
\begin{proof}
The equivalence of categories follows by providing natural
isomorphisms of functors:
$$
\mathcal{GR}\circ \mathcal{IC}\cong Id, \quad
\mathcal{IC}\circ \mathcal{GR}\cong Id.
$$
We define first a natural isomorphism $\sA$ from
$\mathcal{IC}\circ \mathcal{GR}$ to $Id$: for
$(H,\nabla,Fil,\Phi)\in MF$, put $$(E,\theta,
Fil,\phi)=\mathcal{GR}(H,\nabla,Fil,\Phi),\quad (H',\nabla',Fil',
\Phi')=\mathcal{IC}(E,\theta,Fil,\phi).$$ Then one verifies
that the map $$\tilde \Phi: (H',\nabla')=C_1^{-1}\circ
Gr_{Fil}(H,\nabla)\cong (H,\nabla)$$ gives an isomorphism from
$(H',\nabla',Fil',\Phi')$ to $(H,\nabla,Fil,\Phi)$ in the category
$MF$. We call it $\sA(H,\nabla,Fil,\Phi)$. It is
straightforward to verify that $\sA$ is indeed a transformation.
Conversely, a natural isomorphism $\mathcal{B}$ from
$\mathcal{GR}\circ\mathcal{IC}$ to $Id$ is given as follows:
for $(E,\theta,Fil,\phi)$, put $$(H,\nabla,Fil,
\Phi)=\mathcal{IC}(E,\theta,Fil,\phi)\quad
(E',\theta',Fil',\phi')=\mathcal{GR}(H,\nabla,Fil,\Phi).$$ Then
$\phi: Gr_{Fil}\circ C_1^{-1}(E,\theta)\cong (E,\theta)$ induces an
isomorphism from $(E',\theta',Fil',\phi')$ to $(E,\theta,Fil,\phi)$
in $HDF$, which we define to be
$\sB(E,\theta,Fil,\phi)$. It is direct to check that $\sB$ is a
natural isomorphism.
\end{proof}
Before moving to the proof of Theorem \ref{correspondence in the
type (0,f) case} in general, we shall introduce an intermediate
category, the category of one-periodic Higgs-de Rham flows with endomorphism structure $\F_{p^f}$: an object is a five
tuple $(E,\theta,Fil,\phi,\iota)$, where $(E,\theta,Fil,\phi)$ is
object in $HDF$ and $\iota: \F_{p^f}\hookrightarrow
\End_{HDF}(E,\theta,Fil,\phi)$ is an embedding of
$\F_p$-algebras. As an immediate consequence of Proposition \ref{correspondence in the type
(0,1) case}, we have
\begin{corollary}\label{Corresponendence between Faltings catgory with endo and Higgs with endo} There is an equivalence of categories between the category of strict $p$-torsion Fontaine modules with endomorphism structure $\F_{p^f}$ and the category of one-periodic Higgs-de Rham flows over $X_1$ with endomorphism structure $\F_{p^f}$.
\end{corollary}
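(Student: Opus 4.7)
The plan is to bootstrap directly from Proposition \ref{correspondence in the type (0,1) case}: since that proposition already produces quasi-inverse functors $\mathcal{GR}: MF \to HDF$ and $\mathcal{IC}: HDF \to MF$, all that remains is to show the equivalence is compatible with adding an $\F_{p^f}$-action by endomorphisms. The entire argument is purely categorical, using only that an equivalence of categories induces a ring isomorphism on endomorphism rings of corresponding objects.

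First, I would upgrade $\mathcal{GR}$ to a functor $\widetilde{\mathcal{GR}}$ from $MF_f^{\nabla}$ to the category of one-periodic Higgs-de Rham flows with $\F_{p^f}$-structure. Given $(H,\nabla,Fil,\Phi,\iota) \in MF_f^{\nabla}$, I set $\widetilde{\mathcal{GR}}(H,\nabla,Fil,\Phi,\iota) := (\mathcal{GR}(H,\nabla,Fil,\Phi), \mathcal{GR}(\iota))$, where by $\mathcal{GR}(\iota)$ I mean the composite $\F_{p^f} \xrightarrow{\iota} \End_{MF}(H,\nabla,Fil,\Phi) \xrightarrow{\mathcal{GR}} \End_{HDF}(\mathcal{GR}(H,\nabla,Fil,\Phi))$. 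Because $\mathcal{GR}$ is part of an equivalence, it is fully faithful, so this composite is still an injective homomorphism of $\F_p$-algebras. On morphisms, $\widetilde{\mathcal{GR}}$ sends an $\F_{p^f}$-equivariant morphism to its image under $\mathcal{GR}$, which is automatically $\F_{p^f}$-equivariant by naturality.

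Symmetrically, I would define $\widetilde{\mathcal{IC}}$ on the Higgs side by the same recipe, replacing $\mathcal{GR}$ with $\mathcal{IC}$. The natural isomorphisms $\sA: \mathcal{IC}\circ\mathcal{GR} \cong \mathrm{Id}$ and $\sB: \mathcal{GR}\circ\mathcal{IC} \cong \mathrm{Id}$ constructed in the proof of Proposition \ref{correspondence in the type (0,1) case} are natural transformations of the underlying functors, hence commute with any endomorphism, in particular with the image of each $a \in \F_{p^f}$ under $\iota$. Therefore $\sA$ and $\sB$ descend (without modification) to natural isomorphisms of the extended functors, proving that $\widetilde{\mathcal{GR}}$ and $\widetilde{\mathcal{IC}}$ are quasi-inverse.

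There is no genuine obstacle here; the only point that requires verification is that Proposition \ref{correspondence in the type (0,1) case} genuinely delivers an \emph{equivalence} and not merely a pair of constructions, so that the induced map on endomorphism rings is bijective. This was indeed established via the explicit natural isomorphisms $\sA$ and $\sB$ in the proof of that proposition, so the corollary follows formally.
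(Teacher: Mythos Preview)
Your proposal is correct and takes essentially the same approach as the paper: the paper states the corollary as an ``immediate consequence'' of Proposition~\ref{correspondence in the type (0,1) case} and gives no further argument, and what you have written is precisely the formal unpacking of why an equivalence of categories induces an equivalence on the associated categories of objects equipped with an $\F_{p^f}$-endomorphism structure.
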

Obviously, Corollary \ref{Corresponendence between Faltings catgory with endo and Higgs with endo} and the next proposition will complete the proof
of Theorem \ref{correspondence in the type (0,f) case}.
\begin{proposition}\label{correspondence from HB_f and HB_(0,f)}
There is an equivalence of categories between the category of one-periodic Higgs-de Rham flows of level $\leq w$ over $X_1$ with endomorphism structure $\F_{p^f}$ and the category $HDF_{w,f}(X_2/W_2)$.
\end{proposition}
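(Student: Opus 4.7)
The plan is to exploit the splitting $\F_{p^f} \otimes_{\F_p} k \cong k^f$ arising from the $f$ embeddings $\tau_0,\ldots,\tau_{f-1}: \F_{p^f} \hookrightarrow k$ (with $\tau_{i+1} = F_k \circ \tau_i$, indices in $\Z/f\Z$) in order to convert one-periodic flows equipped with $\F_{p^f}$-structure into $f$-periodic flows. Any $\sO_{X_1}$-module $M$ carrying an $\F_p$-algebra embedding $\F_{p^f} \hookrightarrow \End(M)$ splits canonically and functorially as $M = \bigoplus_{i \in \Z/f\Z} M_{(i)}$, where $a \in \F_{p^f}$ acts on $M_{(i)}$ by the scalar $\tau_i(a)$; this decomposition is compatible with Higgs fields, connections, Hodge filtrations and all morphisms. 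Hence the five-tuple $(E,\theta,Fil,\phi,\iota)$ breaks canonically into block-graded components.

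The decisive computation is the interaction of $C_1^{-1} = C^{-1}_{\sX/\sS}\circ \pi^*$ with this grading. The Ogus--Vologodsky functor $C^{-1}_{\sX/\sS}$ is $k$-linear and preserves the grading, but the Frobenius pullback $\pi^*$ shifts it by $+1$: for a local section $s$ of $E_{(i)}$ and $c$ of $\sO_{X_1'}$,
\[
a\cdot(s \otimes c) \;=\; \tau_i(a)\,s \otimes c \;=\; s \otimes F_k(\tau_i(a))\,c \;=\; \tau_{i+1}(a)\,(s\otimes c)
\]
in $\pi^*E = \pi^{-1}E \otimes_{\pi^{-1}\sO_{X_1}}\sO_{X_1'}$, so $\pi^* E_{(i)}$ lies in the $\tau_{i+1}$-eigenspace and therefore $C_1^{-1}(E,\theta)_{(j)} = C_1^{-1}(E_{(j-1)},\theta_{(j-1)})$. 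The $\F_{p^f}$-equivariance of $\phi$ then forces it to split as a family of isomorphisms
\[
\phi_{(j)}: Gr_{Fil_{(j)}}\, C_1^{-1}(E_{(j-1)},\theta_{(j-1)}) \xrightarrow{\sim} (E_{(j)},\theta_{(j)}), \qquad j \in \Z/f\Z,
\]
which are exactly the data assembling into an object of $HDF_f$.

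The forward functor will send $(E,\theta,Fil,\phi,\iota)$ to the flow with initial term $(E_{(0)},\theta_{(0)})$, filtrations $Fil_j$ obtained by transporting $Fil_{(j+1)}$ along the inductively constructed identifications, and period-closing isomorphism $\phi'$ built from composing the $\phi_{(j)}$s. The inverse functor assembles: a flow $(E_0,\theta_0,Fil_0,\ldots,Fil_{f-1},\phi')$ becomes $\tilde E := \bigoplus_i E_i$ with $\F_{p^f}$ acting on the $i$-th summand by $\tau_i$, Higgs field $\tilde \theta := \bigoplus \theta_i$, total filtration $\bigoplus Fil_i$ on $C_1^{-1}(\tilde E,\tilde \theta) = \bigoplus C_1^{-1}(E_i,\theta_i)$, and $\tilde \phi$ equal to the identity on the first $f-1$ summands (using the defining equalities $(E_{i+1},\theta_{i+1}) = Gr_{Fil_i}\, C_1^{-1}(E_i,\theta_i)$ of the flow) and $\phi'$ on the last; the grading shift from the previous paragraph ensures $\tilde \phi$ is automatically $\F_{p^f}$-equivariant.

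The main obstacle will be bookkeeping: one must consistently track the $+1$ shift caused by $\pi^*$ and reconcile the flow definition's use of the literal equality $(E_{j+1},\theta_{j+1}) = Gr_{Fil_j}\,C_1^{-1}(E_j,\theta_j)$ with the decomposition picture, where those identifications appear as the isomorphisms $\phi_{(j)}$. Once this bookkeeping is arranged, verifying that the two functors are quasi-inverse and that morphisms correspond is a direct check: a morphism in the one-periodic $\F_{p^f}$-linear category is equivariant by hypothesis and so decomposes into components compatible with the flow, while a morphism of flows assembles block-diagonally into an $\F_{p^f}$-equivariant morphism.
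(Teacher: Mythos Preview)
Your proposal is correct and follows essentially the same approach as the paper: decompose the one-periodic object into $\F_{p^f}$-eigenspaces, observe that $C_1^{-1}$ cyclically shifts these eigenspaces by $+1$, and read off the $f$-periodic flow (and conversely, assemble the flow into a direct sum with the evident $\F_{p^f}$-action). The only cosmetic difference is that the paper fixes a primitive element $\xi\in\F_{p^f}$ and indexes the eigenspaces by the eigenvalues $\xi^{p^i}$, checking the shift via an explicit matrix identity, whereas you phrase the same decomposition via the embeddings $\tau_i:\F_{p^f}\hookrightarrow k$ and isolate the shift as coming from the base-change $\pi^*$ inside $C_1^{-1}$; both arguments then verify the two constructions are quasi-inverse by the same straightforward bookkeeping.
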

Start off with an object $(E,\theta,Fil_0,\cdots,Fil_{f-1},\phi)$ in
$HDF_{w,f}(X_2/W_2)$. Put
$$(G,\eta):=\bigoplus_{i=0}^{f-1}(E_i,\theta_i)$$ with
$(E_0,\theta_0)=(E,\theta)$. As the functor $C_1^{-1}$ is compatible
with direct sum, one has the identification
$$
C_1^{-1}(G,\eta)=\bigoplus_{i=0}^{f-1}C_1^{-1}(E_i,\theta_i).
$$
We equip $C_1^{-1}(G,\eta)$ with the filtration $Fil=\bigoplus_{i=0}^{f-1}Fil_i$ by the above identification. Also
$\phi$ induces a natural isomorphism of graded Higgs modules $$\tilde \phi:
Gr_{Fil}C_1^{-1}(G,\eta)\cong (G,\eta)$$ as follows: as
$$Gr_{Fil}C_1^{-1}(G,\eta)=\bigoplus_{i=0}^{r-1}Gr_{Fil_i}C_1^{-1}(E_i,\theta_i),$$
we require that $\tilde \phi$ maps the factor
$Gr_{Fil_i}C_1^{-1}(E_i,\theta_i)$ identically to the factor
$(E_{i+1},\theta_{i+1})$ for $0\leq i\leq f-2$ (assume $f\geq 2$ to
avoid the trivial case) and the last factor
$Gr_{Fil_{f-1}}(E_{f-1},\theta_{f-1})$ isomorphically to
$(E_0,\theta_0)$ via $\phi$. Thus the constructed quadruple
$(G,\eta,Fil,\tilde \phi)$ is a periodic Higgs-de Rham flow of period
one.
\begin{lemma}\label{lemma from 0,f to f}
Notation as above. There is a natural embedding of
$\F_p$-algebras
$$
\iota: \F_{p^r}\to \End_{HDF}(G,\eta,Fil,\tilde
\phi).
$$
Thus the extended tuple $(G,\eta,Fil,\tilde \phi,\iota)$ is a one-periodic Higgs-de Rham flow with endomorphism structure $\F_{p^f}$.
\end{lemma}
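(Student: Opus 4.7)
The plan is to define, for each $\lambda \in \F_{p^f}$, an endomorphism $\iota(\lambda)$ of $G = \bigoplus_{i=0}^{f-1} E_i$ by declaring it to act on the summand $E_i$ as scalar multiplication by $\lambda^{p^i}$ (using the fixed inclusion $\F_{p^f} \subset k$), and to verify in turn that $\iota$ is an $\F_p$-algebra embedding into $\End_{HDF_{1}}(G,\eta,Fil,\tilde\phi)$. Since each $\iota(\lambda)$ is a scalar on each summand, it automatically commutes with the direct-sum Higgs field $\eta$ and preserves any $\sO_{X_1}$-submodule filtration; multiplicativity is $(\lambda\mu)^{p^i} = \lambda^{p^i}\mu^{p^i}$, additivity is the Frobenius identity $(\lambda+\mu)^{p^i} = \lambda^{p^i}+\mu^{p^i}$ valid in characteristic $p$, and injectivity is immediate because $\lambda \neq 0$ forces $\lambda^{p^i} \neq 0$.

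The essential step is the compatibility $\tilde\phi \circ Gr\, C_1^{-1}(\iota(\lambda)) = \iota(\lambda)\circ\tilde\phi$. The key calculation is that the Frobenius pullback $\pi^*$ sends the scalar endomorphism ``multiplication by $\mu$'' of an $\sO_{X_1}$-module $V$ to ``multiplication by $\mu^p$'' on $\pi^*V$, because of the tensor relation $(\mu v)\otimes b = v\otimes \mu^p b$ in $V\otimes_{\sO_{X_1},\pi^\#}\sO_{X_1'}$. Since $C^{-1}_{\sX/\sS}$ and the passage to $Gr_{Fil_i}$ are $k$-linear, it follows that $Gr\, C_1^{-1}(\iota(\lambda))$ acts on $Gr_{Fil_i} C_1^{-1}(E_i,\theta_i)$ as scalar multiplication by $\lambda^{p^{i+1}}$. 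For $0\leq i\leq f-2$ this matches $\iota(\lambda)|_{E_{i+1}} = \lambda^{p^{i+1}}$, so the relevant square commutes. The wraparound index $i = f-1$ is precisely where the hypothesis $\lambda \in \F_{p^f}$ enters: $\lambda^{p^f} = \lambda = \lambda^{p^0}$ matches the action on $E_0$ transported by $\phi$.

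The main obstacle, then, is simply bookkeeping the Frobenius twist introduced by $\pi^*$; once one observes that the exponent shift $\lambda^{p^i}\rightsquigarrow \lambda^{p^{i+1}}$ produced by $C_1^{-1}$ is cancelled exactly by the cyclic index shift built into $\tilde\phi$, the $\F_{p^f}$-endomorphism structure $\iota$ drops out canonically, and the extended tuple $(G,\eta,Fil,\tilde\phi,\iota)$ is visibly an object of $HDF_1$ with endomorphism structure $\F_{p^f}$.
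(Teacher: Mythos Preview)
Your proposal is correct and follows essentially the same approach as the paper. The paper fixes a primitive element $\xi\in\F_{p^f}$ and defines $\iota(\xi)$ to act on $E_i$ as multiplication by $\xi^{p^i}$, then verifies the compatibility with $\tilde\phi$ via a matrix identity; you instead define $\iota(\lambda)$ for every $\lambda$ simultaneously and make explicit the reason for the Frobenius twist (namely that $\pi^*$ in $C_1^{-1}=C^{-1}_{\sX/\sS}\circ\pi^*$ is $\sigma$-semilinear on scalars), which the paper's matrix computation encodes but does not spell out.
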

\begin{proof}
Choose a primitive element $\xi_1$ in $\F_{p^f}|\F_{p}$ once and for all. To define the embedding $\iota$, it suffices to specify the image $s:=\iota(\xi_1)$,
which is defined as follows: write
$$(G,\eta)=(E_0,\theta_0)\oplus (E_1,\theta_1)\oplus\cdots\oplus (E_{f-1},\theta_{f-1}).$$
Then $s=m_{\xi_1}\oplus m_{\xi_1^p}\oplus\cdots\oplus m_{\xi_1^{p^{f-1}}}$, where
$m_{\xi_1^{p^i}},i=0,\cdots,f-1$ is the multiplication map by $\xi_1^{p^i}$. It
defines an endomorphism of $(G,\eta)$ and preserves $Fil$ on
$C_1^{-1}(G,\eta)$. Write $(Gr_{Fil}\circ C_1^{-1})(s)$ to be the
induced endomorphism of $Gr_{Fil}C_1^{-1}(G,\eta)$. It remains to
verify the commutativity $$\tilde \phi\circ s=(Gr_{Fil}\circ
C_1^{-1})(s)\circ \tilde \phi.$$ In terms of a local basis, it boils
down to the following obvious equality
{\small
$$
\left(
  \begin{array}{cccc}
    0 & 1 &\ldots&0\\
    \vdots&\vdots&\ddots&\vdots\\
    0&0&\ldots&1\\
    \phi & 0 &\ldots&0\\
  \end{array}
\right)\left(
  \begin{array}{cccc}
    \xi_1 & 0 &\ldots&0\\
        0&\xi_1^{p}&\ldots&0\\
    \vdots&\vdots&\ddots&\vdots\\
 0 & 0 &\ldots&\xi_1^{p^{f-1}}\\
  \end{array}
\right)=\left(
  \begin{array}{cccc}
    \xi_1^p & 0 &\ldots&0\\
       0&\xi_1^{p^2}&\ldots&0\\
    \vdots&\vdots&\ddots&\vdots\\
 0 & 0 &\ldots&\xi_1\\
  \end{array}
\right)\left(
  \begin{array}{cccc}
    0 & 1 &\ldots&0\\
    \vdots&\vdots&\ddots&\vdots\\
    0&0&\ldots&1\\
    \phi & 0 &\ldots&0\\
  \end{array}
\right).
$$
}
\end{proof}
Conversely, given a one-periodic Higgs-de Rham flow with endomorphism structure $\F_{p^f}$, say $(G,\eta,Fil,\phi,\iota)$, we can associate it an object in $HDF_{w,f}(X_2/W_2)$ as follows: the endomorphism $\iota(\xi_1)$
decomposes $(G,\eta)$ into eigenspaces:
$$
(G,\eta)=\bigoplus_{i=0}^{f-1}(G_i,\eta_i),
$$
where $(G_i,\eta_i)$ is the eigenspace to the eigenvalue
$\xi_1^{p^i}$. The isomorphism $C_1^{-1}(\iota(\xi_1))$ induces the
eigen-decomposition of the de Rham module as well:
$$
(C_1^{-1}(G,\eta),Fil)=\bigoplus_{i=0}^{f-1}(C_1^{-1}
(G_i,\eta_i),Fil_i).
$$
Under the decomposition, the isomorphism $\phi:
Gr_{Fil}C_1^{-1}(G,\eta)\cong (G,\eta)$ decomposes into
$\oplus_{i=0}^{f-1}\phi_i$ such that for $i\leq f-2$,
$$
\phi_i: Gr_{Fil_i}C_1^{-1}(G_i,\eta_i)\cong (G_{i+1},\eta_{i+1}),
$$
and $\phi_{f-1}: Gr_{Fil_{f-1}}C_1^{-1}(G_{f-1},\eta_{f-1})\cong (G_{0},\eta_{0})$.
Set $(E,\theta)=(G_0,\eta_0)$.
\begin{lemma}\label{lemma from f to 0,f}
Let $(E_0,\theta_0)=(E,\theta)$. Then the filtrations $\{Fil_i\}$s and isomorphisms of graded Higgs modules
$\{\phi_i\}$s induce inductively the filtration $\widetilde{Fil}_i$
on $C_1^{-1}(E_i,\theta_i), i=0,\cdots,f-1$ and the isomorphism of
graded Higgs modules
$$
\tilde \phi: Gr_{\widetilde{Fil}_{f-1}}C_1^{-1}(E_{f-1},\theta_{f-1})\cong
(E,\theta).
$$
Thus the extended tuple
$(E,\theta,\widetilde{Fil}_0,\cdots,\widetilde{Fil}_{f-1},\tilde
\phi)$ is an object in $HDF_{w,f}(X_2/W_2)$.
\end{lemma}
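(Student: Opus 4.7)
The plan is to construct the filtrations $\widetilde{Fil}_i$ and the isomorphism $\tilde\phi$ recursively, transporting the data $(Fil_i,\phi_i)$ supplied by the eigen-decomposition along an inductively produced chain of isomorphisms of graded Higgs modules $\psi_i:(E_i,\theta_i)\xrightarrow{\sim}(G_i,\eta_i)$.

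I will start with $(E_0,\theta_0):=(G_0,\eta_0)=(E,\theta)$, $\psi_0=\mathrm{id}$, and $\widetilde{Fil}_0:=Fil_0$, regarded as a Hodge filtration on $C_1^{-1}(E_0,\theta_0)=C_1^{-1}(G_0,\eta_0)$. Supposing that $(E_i,\theta_i)$, the isomorphism $\psi_i$, and $\widetilde{Fil}_i=C_1^{-1}(\psi_i)^*Fil_i$ have already been produced for some $0\leq i\leq f-2$, I set
$$(E_{i+1},\theta_{i+1}):=Gr_{\widetilde{Fil}_i}C_1^{-1}(E_i,\theta_i),$$
define
$$\psi_{i+1}:=\phi_i\circ Gr\,C_1^{-1}(\psi_i):(E_{i+1},\theta_{i+1})\xrightarrow{\sim}(G_{i+1},\eta_{i+1}),$$
and then declare $\widetilde{Fil}_{i+1}:=C_1^{-1}(\psi_{i+1})^*Fil_{i+1}$, which is again a Hodge filtration of level $\leq p-1$ because $\psi_{i+1}$ is an isomorphism. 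After $f-1$ iterations I have $(E_{f-1},\theta_{f-1})$ together with $\psi_{f-1}$ and $\widetilde{Fil}_{f-1}$, and the same recipe, read now with the cyclic convention $(f-1)+1\equiv 0\pmod f$, produces
$$\tilde\phi:=\phi_{f-1}\circ Gr\,C_1^{-1}(\psi_{f-1}):Gr_{\widetilde{Fil}_{f-1}}C_1^{-1}(E_{f-1},\theta_{f-1})\xrightarrow{\sim}(G_0,\eta_0)=(E,\theta).$$

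What remains is to check that every $(E_i,\theta_i)$ inherits a grading under which $\widetilde{Fil}_i$ obeys Griffiths transversality and the isomorphisms $\psi_i,\tilde\phi$ are maps of graded Higgs modules; this is automatic from the functoriality of $C_1^{-1}$ and $Gr$ together with the fact that each $(G_i,\eta_i)$ is graded as a direct summand of the graded $(G,\eta)$, so no substantive verification is required. The only non-formal ingredient in the whole construction is the cyclic-shift description of $\phi_i$ already recorded in the statement preceding the lemma, which in turn encodes the Frobenius twist of the endomorphism structure on the source of $\phi$ relative to that on the target — the same matrix identity established at the end of Lemma \ref{lemma from 0,f to f}. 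Granted that cyclic shift, I expect no genuine obstacle; the remaining work, to be handled together with the quasi-inverse direction in Lemma \ref{lemma from 0,f to f}, is to verify that the lengthening/decomposition operations are mutually inverse up to natural isomorphism, which follows directly from the definitions.
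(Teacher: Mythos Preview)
Your proposal is correct and follows essentially the same approach as the paper: the paper also transports $Fil_i$ along $C_1^{-1}(\phi_0),\ldots$ to define $\widetilde{Fil}_i$ and then composes the resulting graded maps with $\phi_{f-1}$ to obtain $\tilde\phi$, though it only writes out the case $f=2$ whereas you give the general recursion via the auxiliary isomorphisms $\psi_i$. Your remark about the cyclic shift of eigenspaces under $C_1^{-1}$ is the justification for the decomposition $\phi=\oplus_i\phi_i$ stated just before the lemma, which the paper takes as given at this point.
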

\begin{proof}
The filtration $\widetilde{Fil}_{0}$ on
$C_1^{-1}(E_0,\theta_0)$ is just $Fil_0$. Set $$(E_1,\theta_1)=Gr_{Fil_0}C_1^{-1}(E_0,\theta_0).$$ Via the isomorphism
$$C_1^{-1}(\phi_0):C_1^{-1}Gr_{Fil_0}C_1^{-1}(G_0,\eta_0)\cong
C_1^{-1}(G_1,\eta_1),$$ we obtain the filtration
$\widetilde{Fil}_{1}$ on $C_1^{-1}(E_1,\theta_1)$ from the $Fil_1$ on $C_1^{-1}(G_1,\eta_1)$ by pull-back. By construction, one has the isomorphism
$$
GrC_1^{-1}(\phi_0): Gr_{\widetilde{Fil}_1}C_1^{-1}(E_1,\theta_1)\cong Gr_{Fil_1}C_1^{-1}(G_1,\eta_1).
$$
Repeating the same procedure for $(E_2,\theta_2)$ and so on, we shall inductively obtain the filtration $\widetilde{Fil}_i$ on $C_1^{-1}(E_i,\theta_i)$ for $i=1,\cdots,f-1$. Finally we define
$$
\tilde \phi: Gr_{\widetilde{Fil}_{f-1}}C_1^{-1}(E_{f-1},\theta_{f-1})= (Gr_{\widetilde{Fil}_{f-1}}C_1^{-1})\circ\cdots\circ  (Gr_{\widetilde{Fil}_{0}}C_1^{-1})(E,\theta)\to (E,\theta)
$$
to be the composite $(GrC_1^{-1})^{f-1}(\phi_0)\circ\cdots \circ(GrC_1^{-1})(\phi_{f-2})\circ \phi_{f-1}$.
\end{proof}
We come to the proof of Proposition \ref{correspondence from HB_f
and HB_(0,f)}.
\begin{proof}
For $f=1$, there is nothing to prove. Suppose $f\geq 2$ in the following. Note that Lemma \ref{lemma from 0,f to f} gives a functor
$\sE$ from $HDF_{w,f}(X_2/W_2)$ to the category of one-periodic Higgs-de Rham flows with endomorphism structure $\F_{p^f}$, while Lemma
\ref{lemma from f to 0,f} gives a functor $\sD$ in the opposite direction.
We show that they give an equivalence of categories. It is direct to
see that
$$
\sD\circ\sE=Id.
$$
So it remains to give a natural isomorphism $\tau$ between
$\mathcal{E}\circ\mathcal{D}$ and $Id$. For $(E,\theta,Fil,\phi,\iota)$, let
$$\mathcal{D}(E,\theta,Fil,\phi,\iota)=(G,\eta,Fil_0,
\cdots,Fil_{f-1},\tilde \phi),$$ and $$\mathcal{E}(G,\eta,Fil_0,\cdots, Fil_{f-1},\tilde
\phi)=(E',\theta',Fil',\phi',\iota').$$
By the construction, we get that $(E',\theta')$ is equal to
$$
(G,\eta)\oplus Gr_{Fil_0}C_1^{-1}(G,\eta)\oplus \cdots\oplus (Gr_{Fil_{f-2}}C_1^{-1})\circ\cdots \circ(Gr_{Fil_0}C_1^{-1})(G,\eta).
$$
Let $(E,\theta)=(E_0,\theta_0)\oplus (E_1,\theta_1)\oplus \cdots\oplus (E_{f-1},\theta_{f-1})$ be the eigen-decomposition of $(E,\theta)$ under $\iota(\xi_1)$. For $1\leq i\leq f-1$, there is a natural isomorphism $\phi_{i-1}\circ (GrC^{-1})\phi_{i-2}\circ\cdots\circ (GrC^{-1})^{i-1}\phi_0$ of graded Higgs modules between the factors:
$$
(Gr_{Fil_{i-1}}C_1^{-1})\circ(Gr_{Fil_{i-2}}C_1^{-1})\cdots \circ(Gr_{Fil_0}C_1^{-1})(G,\eta)\cong (E_i,\theta_i).
$$
Thus $Id\oplus \bigoplus_{i=1}^{f-1}\phi_{i-1}\circ (GrC^{-1})\phi_{i-2}\circ\cdots\circ (GrC^{-1})^{i-1}\phi_0$ provides us an isomorphism of graded Higgs modules from $(E',\theta')$ to $(E,\theta)$. It is easy to check that it yields an isomorphism of
$\tau(E,\theta,Fil,\phi,\iota)$ in the latter category. The functorial property of $\tau$ is easily verified.
\end{proof}
This completes the Higgs correspondence in positive characteristic. In the following we deduce from it some direct consequences.\\

{\itshape Crystalline $\F_{p^f}$-representations.}\\
 
Let $X/W$ be a smooth proper scheme over $W$. An $\F_{p^f}$-representation of $\pi_1(X_{K})$ is said to be crystalline if it is crystalline as an $\F_p$-representation by restriction of scalar. In other words, a crystalline $\F_{p^f}$-representation is a crystalline $\F_p$-representation $\V$ together with an embedding of $\F_p$-algebras $\F_{p^f}\hookrightarrow \End_{\pi_1(X_{K})}(\V)$. Similarly, one has the notion of crystalline $W_n(\F_{p^f})$-representation for $n\in \N\cup \{\infty\}$. The following corollary is immediate from Theorem \ref{Faltings thm} and Theorem \ref{correspondence in the type (0,f) case}.
\begin{corollary}\label{correspondence from crystalline represenations and
HB_(0,f)}
Let $X/W$ be a smooth proper scheme.  Assume $w\leq p-2$. There is an equivalence of categories between the category of crystalline $\F_{p^f}$-representations of $\pi_1(X_{K})$ with Hodge-Tate weight $\leq w$ and the category of $f$-periodic Higgs-de Rham flows of level $\leq w$ over $X_1$.
\end{corollary}
For an object $(E,\theta,Fil_0,\cdots,Fil_{f-1},\phi)\in HDF_{w,f}(X_2/W_2)$,  we define its shift and lengthening as follows: note for $(E_f,\theta_f)=Gr_{Fil_{f-1}}(H_{f-1},\nabla_{f-1})$,
$C_1^{-1}(\phi)$ induces the pullback filtration
$(C_1^{-1}(\phi))^*Fil_0$ on $C^{-1}_1(E_f,\theta_f)$ and an
isomorphism of graded Higgs modules $GrC_1^{-1}(\phi)$ on the gradings. Then it
is easy to check that the tuple
$$(E_1,\theta_1,Fil_1,\cdots,Fil_{f-1},C_1^{-1}(\phi)^*Fil_0,GrC_1^{-1}(\phi))$$
is an object in $HDF_{w,f}(X_2/W_2)$, which we call the \emph{shift} of $(E,\theta,Fil_0,\cdots,Fil_{f-1},\phi)$. For any
multiple $lf, l\geq 1$, we can lengthen
$(E,\theta,Fil_0,\cdots,Fil_{f-1},\phi)$ to an object of
$HDF_{w,lf}(X_2/W_2)$: similar to above, we can inductively define the
induced filtration on $(H_j,\nabla_j), f\leq j\leq lf-1$ from
$Fil_i$s via $\phi$. One has the induced isomorphism of graded Higgs
modules
$$(GrC_1^{-1})^{l'f}(\phi):
(E_{(l'+1)f},\theta_{(l'+1)f})\cong (E_{l'f},\theta_{l'f}), \quad 0\leq
l'\leq l-1.
$$
The isomorphism $\phi_{l}: (E_{lf},\theta_{lf})\cong
(E_0,\theta_0)$ is defined to be the composite of them. The obtained
object $(E,\theta,Fil_0,\cdots,Fil_{lf-1},\phi_l)$ is called the
$l-1$-th \emph{lengthening} of
$(E,\theta,Fil_0,\cdots,Fil_{f-1},\phi)$. The following result will be obvious from the proof of Theorem \ref{correspondence in the type (0,f) case}.
\begin{corollary}\label{operations on Higgs-de Rham sequences}
Notation as in Corollary \ref{correspondence from crystalline represenations and
HB_(0,f)}. Let $\rho$ be the corresponding crystalline $\F_{p^f}$-representation to $(E,\theta,Fil_0,\cdots,Fil_{f-1},\phi)$. Then the followings are true:
\begin{itemize}
    \item [(i)] The shift of
    $(E,\theta,Fil_0,\cdots,Fil_{f-1},\phi)$ corresponds to
   $\rho^{\sigma}=\rho\otimes_{\F_{p^f},\sigma}\F_{p^f}$, the $\sigma$-conjugation
   of $\rho$. Here $\sigma\in \Gal(\F_{p^f}|\F_p)$ is the Frobenius
   element.
    \item [(ii)] For $l\in \N$, the $l-1$-th lengthening of
    $(E,\theta,Fil_0,\cdots,Fil_{f-1},\phi)$ corresponds to the extension of scalar
    $\rho\otimes_{\F_{p^f}}\F_{p^{lf}}$.
\end{itemize}
\end{corollary}

{\itshape Locally freeness of preperiodic Higgs modules.}\\

In the following, we explain that a preperiodic Higgs module is locally free, a posteriori property.
\begin{proposition}\label{locally freeness of periodic bundles}
Periodic Higgs module is locally free.
\end{proposition}
\begin{proof}
Let $(E,\theta)$ be a periodic Higgs module. Then a periodic Higgs-de Rham
flow with its leading term $(E,\theta)$ gives an object in the category
$HDF_{w,f}(X_2/W_2)$ for some $f$. Let
$(H,\nabla,Fil,\Phi,\iota)$ be the corresponding object in
$MF^{\nabla}_{[0,w],f}(X_2/W_2)$ after Theorem \ref{correspondence in the type (0,f) case}. The proof of \cite[Theorem 2.1]{Fa1} (cf. page 32
loc. cit.) asserts that $Fil$ is a filtration of locally free
subsheaves of $H$ and locally split, which implies that $Gr_{Fil}H$ is locally free.
It follows immediately that $(E,\theta)$ is also locally free.
\end{proof}
A. Langer \cite[Proposition 1.1 \S5.3]{Langer2} has obtained the following enhancement of the previous result (notice however that the assumption that $\rk \ E\leq p$ in the cited statement on the Higgs module is in our case unnecessary due to a slightly different definition of Higgs-de Rham flow).
\begin{corollary}[Langer]\label{locally free}
Preperiodic Higgs module is locally free.
\end{corollary}
\begin{proof}
It follows from Corollary \ref{locally freeness of periodic bundles} and Lemma 3 \cite{Langer2}.
\end{proof}
\begin{corollary}\label{Hodge filtration property of filtrations in preperioidc flow}
The Griffiths transverse filtrations in a preperiodic Higgs-de Rham flow are filtrations by locally free subsheaves and locally split.
\end{corollary}
\begin{proof}
The periodic case has been explained in the proof of Proposition \ref{locally freeness of periodic bundles}. But it also follows from Corollary \ref{locally free}, by induction on the level of filtrations. So does the preperiodic case.  
\end{proof}

\section{Inverse Cartier transform over a truncated Witt ring}
The inverse Cartier transform of Ogus-Vologodsky \cite{OV} has played a pivotal role in the notion of a (periodic) Higgs-de Rham flow in characteristic $p$. In order to obtain the analogous notion of (periodic) Higgs-de Rham flow over a truncated Witt ring, we need to construct a lifting of the inverse Cartier transform. In this section, $X_n$ is a smooth scheme over $W_n$ and $X_{n+1}$ is a $W_{n+1}$-lifting of $X_n$.  \\

An anonymous referee has kindly pointed to us that the work of A. Shiho \cite{Shiho} is related to our construction below. Recall, for each $n\in \N$, $S_n=\Spec\ W_n$ and $F_{S_n}: S_n\to S_n$ the Frobenius automorphism. In \cite{Shiho} Shiho constructs a functor from the category of quasi-nilpotent Higgs modules on $X_n^{(n)}$ to the category of quasi-nilpotent flat modules on $X_n$, where $X_n$ is a smooth scheme over $W_n$ and $X_n^{(n)}=X_n\times_{F_{S_n}^n}S_{n}$, under the assumption that $X_n^{(m)}=X_n\times_{F_{S_n}^m}S_{n}, 0\leq m\leq n$ admits a smooth lifting $X^{(m)}_{n+1}$ to $S_{n+1}$ and the Frobenius liftings $F_{n+1}^{m}: X^{(m-1)}_{n+1}\to X^{(m)}_{n+1}$ over $S_{n+1}$ exist. The functor is a nice $p$-adic reincarnation of the notion of $\lambda$-connection in complex differential geometry. However, the assumption on the Frobenius lifting is very restrictive for a projective $W_n$-scheme, which is however the basic assumption to formulate the semistability for Higgs modules. For example, a smooth projective curve over $W_2$ admits no Frobenius lifting once its genus is greater than one. Our construction was inspired by the fact that Ogus-Vologodsky's construction extends the theory of strict $p$-torsion Fontaine modules (see \S4 \cite{OV}, see also \cite{SXZ}, \cite{LSZ}). We have worked out a generalization for sub strict $p^n$-torsion Fontaine modules in \cite{SZPeriodic} and the current construction is then a further generalization (without assuming the existence of an ambient strict $p^n$-torsion Fontaine module). \\

Let us put $X_n'=X_n\times_{F_{S_n}}S_n$. Then $X'_{n+1}$ is a smooth lifting of $X_n'$ over $W_{n+1}$. The mod $p^{n-1}$ reduction $X_n\otimes \Z/p^{n-1}\Z$ of $X_n$ is denoted by $X_{n-1}$. Similarly for $X_{n-1}'$. Let us introduce a category $\sH(X_n')$ of Higgs modules over $X_n'$ as follows: an object is given by a tuple $$(E,\theta, \bar{H}, \bar{\nabla},\overline{Fil},\bar{\psi} ),$$ where $(E,\theta)$ is a graded Higgs module over $X_n'$ of exponent $\leq p-2$, $(\bar{H}, \bar{\nabla}, \overline{Fil})$ a de Rham module over $X'_{n-1}$ with the level of Hodge filtration $\leq p-2$ and
$$
\bar{\psi}: Gr_{\overline{Fil}}(\bar{H}, \bar{\nabla})\cong (\bar{E},\bar{\theta}):=(E,\theta)\otimes \Z/p^{n-1}\Z
$$
an isomorphism of graded Higgs modules over $X_{n-1}'$. The morphism in the category is defined in the obvious way. For $n=1$, the above tuple is reduced to a nilpotent graded Higgs module over $X_1'$ of exponent $\leq p-2$. So $\sH(X_1')$ is a full subcategory of $HIG_{p-1}(X_1')$.

\begin{theorem}\label{lifting of inverse cartier} 
Notation as above. There exists a functor $\sC_{n}^{-1}$ from the category $\sH(X_n')$ to the category $MIC(X_n)$ of flat modules over $X_n$ such that $\sC_{n}^{-1}$ lifts $\sC_{n-1}^{-1}$ and such that $\sC_{1}^{-1}$ agrees with the inverse Cartier transform $C^{-1}_{\sX/\sS}$ of Ogus-Vologodsky \cite{OV} with $(\sX,\sS)=(X_1/k,X_2'/W_2)$.
\end{theorem}

We shall also introduce an intermediate category $\widetilde{MIC}(X_n')$ which we call the category of twisted flat modules over $X_n'$. The construction of the functor $\sC_n^{-1}$ consists of constructing the first functor $\sT_n: \sH(X_n')\to \widetilde{MIC}(X_n')$ and the second functor $\sF_n: \widetilde{MIC}(X_n')\to MIC(X_n)$. The category $\widetilde{MIC}(X_n')$ is closely related to the category of quasi-nilpotent $\sO_{X_n'}$-modules with integrable $p$-connections of Shiho \cite{Shiho} which we shall explain later. The motivation to introduce this new category is mainly because of the necessity to make sense of $p$-powers in the denominators appearing in the Taylor formula (\ref{gluing function}).\\

Let $X$ be a smooth scheme over $S_n$. First recall that a \emph{Lie algebroid} on $X$ is a locally free $\sO_{X}$-module $\sA$ equipped with a skew-symmetric $\sO_{X}$-bilinear pairing
$$
[\cdot,\cdot]_{\sA}:\sA\times \sA\to \sA
$$
satisfying the Jacobi identity and an action of $\sA$ on $\sO_{X}$ by derivations (so-called \emph{anchor map}) which is given by an $\sO_{X}$-linear morphism of Lie algebras
$$
\alpha: \sA\to T_{X}
$$
subject to the compatibility condition (Leibniz rule)
$$
[x,fy]_{\sA}=\alpha(x)(f)y+f[x,y]_{\sA}.
$$
Let us consider the following Lie algebroid  $(T_{X},\alpha,\{\cdot,\cdot\}:=p[\cdot,\cdot])$, with the anchor $\alpha=p\cdot Id: T_{X}\to T_{X}$ and $\{D_1,D_2\}=p[D_1,D_2] $ for any local sections $D_i, i=1,2$ of $T_{X}$, where $[\cdot,\cdot]$ is the usual Lie bracket for the tangent sheaf. Let $\mathcal{D}^{(-1)}_{X}$ denote the sheaf of enveloping algebras of the Lie algebroid $(T_{X},\alpha,\{\cdot,\cdot\})$. Thus $ \mathcal{D}^{(-1)}_{X}$ is generated by the algebra of functions $\mathcal{O}_{X}$ and the $\mathcal{O}_{X}$-module of derivations $T_{X}$, subject to the module and commutator relations
$$f\cdot D =fD, \quad D \cdot f-f\cdot D=pD(f), \quad D \in T_{X}, \quad f\in \mathcal{O}_{X},$$ and the Lie algebroid relation
$$
D_1\cdot D_2-D_2\cdot D_1=\{D_1,D_2\}, \quad D_1,D_2\in T_{X}.
$$
Next we introduce a sheaf of twisted differential operators on $X$ as follows.
\begin{definition}
Notation as above. Let $U\subset X$ be an open affine subset over $S_n$. Set $\widetilde{\mathcal{D}}_{X}(U)$ to be the algebra generated over $ \mathcal{D}^{(-1)}_{X}(U)$ by symbols
$$\Bigg\{ \gamma_m(D_1,\cdots ,D_{p-1+m}) \!\!\ \Big| \textrm{ for } m\in \mathbb{N}, \textrm{ and any } D_1,\cdots D_{p-1+m}\in T_{X}(U) \Bigg\},$$ subject to the following six relations:
\begin{enumerate}
\item $ p^m\cdot \gamma_m(D_1,\cdots ,D_{p-1+m})= D_1\cdots D_{p-1+m}$;
\item $\gamma_m(D_1,\cdots ,\alpha D_i+\alpha'  D_i',\cdots D_{p-1+m})= \alpha\cdot\gamma_m(D_1,\cdots, D_i,\cdots ,D_{p-1+m})$\\ $+\alpha'\cdot\gamma_m(D_1,\cdots, D_i',\cdots,D_{p-1+m})$,  $\alpha, \alpha'\in W_n$, $D_i,D_i'\in  T_{X}(U)$;
\item for $m\geq 1$,   $ \gamma_m(D_1,\cdots,  D_{p-1+m})\cdot f=  f\cdot \gamma_m(D_1,\cdots,  D_{p-1+m})$\\$+ \sum_{i=1}^{p-2+m} \gamma_{m-1}(D_1,\cdots, D_{i-1}, D_i(f)D_{i+1},\cdots , D_{p-1+m})+$\\$
    \gamma_{m-1}(D_1,\cdots , D_{p-2+m})\cdot D_{p-1+m}(f)$;
  \item   $\gamma_{m}(D_1,\cdots,D_i,D_{i+1},\cdots, D_{p-1+m})=\gamma_{m}(D_1,\cdots,D_{i+1},D_{i},\cdots, D_{p-1+m})$\\$+ \gamma_{m-1}(D_1,\cdots ,D_{i-1},[D_i,D_{i+1}],D_{i+2},\cdots, D_{p-1+m}) $;
\item $\gamma_{m_1}(D_1,\cdots,D_{p-1+m_1})\cdot \gamma_{m_2}(D_{p+m_1},\cdots, D_{2p-2+m_1+m_2})$\\$=p^{p-1}\gamma_{p-1+m_1+m_2}(D_1,\cdots,D_{2p-2+m_1+m_2})$;
  \item $\gamma_{m}(D_1,\cdots,D_{p-1+m})\cdot D_{p+m}=D_1\cdot\gamma_{m}(D_2,\cdots,D_{p+m})$\\
        $= p\cdot \gamma_{m+1}(D_1,\cdots,D_{p-1+m}, D_{p+m}) $.
\end{enumerate}
The sheaf of twisted differential operators $\widetilde{\mathcal{D}}_{X}$ is the associated sheaf to the presheaf $U\mapsto \widetilde{\mathcal{D}}_{X}(U)$.
\end{definition}
The above definition is to legitimate the $p$-powers appearing in the denominators of differential operators. In particular, one may regard $\gamma_m(D_1,\cdots ,D_{p-1+m})$ as a symbol for $\frac{D_1\cdots D_{p-1+m}}{p^m}$. Since $\widetilde{\mathcal{D}}_{X}$ contains $\mathcal{O}_{X}$ as a subsheaf of algebras, it has a natural left $\mathcal{O}_{X}$-module structure. Thus it contains $T_{X}$ as a left $\mathcal{O}_{X}$-submodule.
\begin{definition}\label{twisted connection}
A twisted connection on an $\sO_{X}$-module $H$ is a $W_n$-morphism between sheaves of $W_n$-algebras
$$
\tilde{\nabla}: \widetilde{\mathcal{D}}_{X}\to \End_{W_n}(H)
$$
extending the structural morphism $\mathcal{O}_{X}\to \End_{W_n}(H)$.
\end{definition}
A coherent $\sO_{X}$-module equipped with a twisted connection is said to a \emph{twisted flat module}. Let $\widetilde{MIC}(X)$ be the category of
twisted flat modules over $X$. An explanation of the relation with the notion of an $\sO_{X}$-module with integrable $p$-connection is in order. We cite the following definition from Shiho \cite{Shiho}:
\begin{definition}[{\cite[Definition 1.1-1.2,1.5]{Shiho}}]\label{def of p-connection}
Notation as above. A $p$-connection $\nabla$ on an $\sO_{X}$-module $H$ is an $W_n$-linear map $\nabla:H\to H\otimes\Omega_{X}$, such that
$$
\nabla(fh)= p\cdot df\otimes h + f\nabla(h), \quad f\in \mathcal{O}_{X},h\in H.
$$
It is said to be integrable if $\nabla_1\circ \nabla=0$, where $$\nabla_1: H\otimes \Omega_{X}\to H\otimes \Omega^2_X$$ is the $W_n$-linear map defined by via the formula $$\nabla_1(h\otimes \omega)=\nabla(h)\wedge \omega+ph\otimes d\omega.$$ Moreover, an integrable $p$-connection $(H,\nabla)$ is quasi-nilpotent if locally
$\theta^{a}=0$ once $|a|\geq N$ for some natural number $N$ where $\theta$ is the connection one-form $\nabla(e)=\sum_i\theta_i(e)dt_i$
with respect to a set of \'{e}tale local coordinates $\{t_1,\cdots,t_d\}$ of $X$ and $\theta^a=\prod_i\theta_i^{a_i}$ for a multi-index $a=(a_1,\cdots,a_d)\in \N^d$ and $|a|=\sum_ia_i$.
\end{definition}
In the above definition, the natural number $N$ depends on the local expression of a local section $e$. However, in Lemma 1.6 \cite{Shiho}, an integrable $p$-connection $\nabla$ on $H$ being quasi-nilpotent is shown to be independent of choices of local coordinates. Let us denote the category of $\sO_X$-modules with $p$-connections by $MC^{(-1)}(X)$ and the integrable ones by $MIC^{(-1)}(X)$. To our purpose, we also
need to introduce a level structure on the quasi-nilpotency of an integrable $p$-connection.
\begin{definition}\label{quasi-nilpotence}
Notation as above. Let $(H,\nabla)$ be a quasi-nilpotent $\sO_X$-module with an integrable $p$-connection. It is said to be quasi-nilpotent of level $\leq m$ if
$$
\nabla_{D_1}\circ\cdots\circ \nabla_{D_{m+1}}=0, \qquad \textrm{  for any   } D_1,\cdots D_{m+1}\in T_{X}.
$$
Denote the category of quasi-nilpotent $\sO_X$-modules with an integrable $p$-connection of level $\leq m$ by $MIC^{(-1)}_{m}(X)$.
\end{definition}
We shall point out that it is rather subtle to read the level structure in terms of  local coordinates. Now there is a functor from $\widetilde{MIC}(X)$ to $MIC^{(-1)}_{p-2+n}(X)$: given a twisted connection $\tilde \nabla$ on $H$, one considers only the restriction to $T_{X}\subset \tilde{\sD}_{X}$ which yields a $W_n$-linear morphism
$$
\nabla:=\tilde \nabla|_{T_{X}}: H\to H\otimes \Omega_X.
$$
It follows directly from the definition that this is indeed an integrable $p$-connection and quasi-nilpotent of level $\leq p-2+n$. Notice that the integrability amounts to the relation
$$
\nabla(D_1)\circ \nabla(D_2)-\nabla(D_2)\circ \nabla(D_1)= \nabla(\{D_1,D_2\}), \quad  D_1,D_2\in  T_{X}.
$$
For $m\geq p$, we may write
$$
\frac{\nabla(D_1)\circ\cdots \circ \nabla(D_m)}{m!}=
\frac{p^{m+1-p}}{m!}\cdot\widetilde{\nabla}(\gamma_{m+1-p}(D_1,\cdots,D_m)),
$$
and as the factor $\frac{p^{m+1-p}}{m!}$ converges to zero $p$-adically as $m$ goes to the infinity, $\frac{\nabla(D_1)\circ\cdots \circ \nabla(D_m)}{m!}\to 0$ when $m\to\infty$. This convergence property is crucial in the construction of our second functor $\sF_n$. As a side remark, we do not know whether the above functor from $\widetilde{MIC}(X)$ to $MIC^{(-1)}_{p-2+n}(X)$ is essentially surjective (but it is not faithful). Later, we shall come back to this point again.\\

Now we can proceed to the construction of our first functor $\sT_n: \sH(X_n')\to \widetilde{MIC}(X_n')$. We shall present two approaches in the following. The first approach, which is our original approach, is a method of ``local lifting-global gluing" which may be more familiar to a reader whose background is in complex algebraic geometry. The second approach is based on a beautiful construction suggested by the referee who kindly allows us to reproduce his/her idea here. This direct approach is much more transparent. \\

{\itshape First approach.}\\

Let $X$ be a smooth scheme over $S_n$. Let $\bar X:=X\otimes \Z/p^{n-1}\Z$ be its reduction mod $p^{n-1}$. Since our arguments rely heavily on local calculations using a basis of local sections of an $\sO_X$-module, we will restrict ourselves in this approach to the full subcategory $\sH_{lf}(X)\subset \sH(X)$ consisting of  \emph{locally free} objects. However, this restriction is caused mainly for simplicity in the local arguments.  For a general coherent object, we may use a set of minimal generators with possible relations for a coherent object locally.  Also, we need to introduce some other categories which will be used only in this approach. The category $\sH^{ni}(X)$ (resp. $\sH^{ni}_{lf}(X)$) is a variant of $\sH(X)$ (resp. $\sH_{lf}(X)$): its object is also a tuple $(E,\theta, \bar{H}, \bar{\nabla},\overline{Fil},\bar{\psi} )$; but the integrabilities on $\theta$ and $\bar{\nabla}$ are not required. For brevity, an object in $\sH(X)$ or $\mathcal{H}^{ni}(X)$ is written as $(E,\bar{H})$. Second, let $MCF_{p-2}(X)$ be a category of filtered $\sO_X$-modules equipped with (not necessarily integrable) connections: its object is a triple $(H,\nabla,Fil)$, where $H$ is a locally free $\sO_{X}$-module, $\nabla$ is a $W_n$-linear connection on $H$, and $Fil$ a finite exhaustive decreasing filtration of locally free $\sO_X$-submodules on $H$ of level $\leq p-2$ which is locally split and satisfies Griffiths' transversality. There is a diagram of these categories connected by natural functors:
$$
\begin{tikzcd}
\,&MCF_{p-2}(X)\arrow{d}{\,}[swap]{R_n}\arrow{rd}{G_n}\\
\sH(X)\arrow[hook]{r}&\sH^{ni}(X)&MC^{(-1)}(X)
\arrow[hookleftarrow]{r}&MIC^{(-1)}_{p-3+n}(X)
\end{tikzcd}
$$

The functor $R_n$ is the obvious one: to $(H,\nabla, Fil)$, one associates the graded Higgs module $(E,\theta)=Gr_{Fil}(H,\nabla)$, which is locally free by the assumption on the filtration,  and also $(\bar{H},\bar{\nabla},\overline{Fil})$,  its mod $p^{n-1}$ reduction. One notices that there is a natural isomorphism of graded Higgs modules
$$
\bar{\psi}: Gr_{\overline{Fil}}(\bar{H}, \bar{\nabla})\cong Gr_{Fil}(H,\nabla)\otimes \Z/p^{n-1}\Z.
$$
The functor $G_n$ is a variant of a construction due to Faltings \cite[Ch.II]{Fa1}, which originates from \cite{FL} in the case $X=S_n$. Given an object $(H,\nabla, Fil)\in MCF_{p-2}(X)$, the object $(\tilde H,\tilde \nabla)=G_n(H,\nabla, Fil)$ is defined as follows: $\tilde{H}$ is the cokernel of the first  map of the following exact sequence
\begin{equation}\label{defining tilde H}
\begin{CD}
\oplus_i Fil^i@>[-1]-p\cdot Id>> \oplus_i Fil^i@>\rho>> \tilde{H}\to 0,
 \end{CD}
 \end{equation}
where $[-1]:=\oplus_i(Fil^i\hookrightarrow Fil^{i-1})$,  $Id$ denotes the identity map, and $\rho$ is the natural projection map. Here we have extended the filtration so that $$Fil^i=Fil^0, i\leq -1, \quad Fil^{j}=0, j\geq p.$$ Consider the $W_n$-linear map
 $$
 \nabla':=\oplus_i\nabla|_{Fil^i},\quad \nabla|_{Fil^i}: Fil^i\to Fil^{i-1}\otimes \Omega_{X}.
 $$
The image $([-1]-p\cdot Id)(\oplus_iFil^i)\subset \oplus_i\nabla|_{Fil^i}$ being preserved, $\nabla'$ induces a $W_n$-linear map
$$
\tilde{\nabla}: \tilde{H}\to \tilde{H}\otimes \Omega_{X}.
$$
One checks immediately that $\tilde{\nabla}$ is indeed a $p$-connection. \\

First, we show that the functor $R_n$ is locally essentially surjective.
\begin{lemma}\label{local lifting}
Let $X$ be a smooth affine scheme over $S_n$. Let $(E,\bar H)$ be an object in $\sH_{lf}^{ni}(X)$. Suppose each component of $E=\oplus_kE_k$ is a free $\sO_X$-module. Then there exists an object $(H,\nabla,Fil)\in MCF_{p-2}(X)$ such that $R_n(H,\nabla,Fil)=(E,\bar H)$.
\end{lemma}
\begin{proof}
Write $E=\oplus_{k=0}^w E^{w-k,k}, \theta=\oplus_{k=0}^{w-1}\theta_w$, and   for $0\leq k \leq w$ take a set of basis $\{e_k\}$ for the $\sO_X$-module $E^{w-k,k}$. Under the basis, $\theta_k$ is expressed in terms of a matrix of differential one-forms which we write again by $\theta_k$.
Put the bar over $\theta_k$ as well as $\{e_k\}$ to mean their mod $p^{n-1}$-reduction. Then take the basis $\{f'_k\}_{0\leq k\leq w}$ of $Gr_{\overline{Fil}}(\bar{H})$ such that
$$
\bar{\psi}(f'_k)=\bar{e}_k.
$$
Also choose a basis $\{\bar{f}_k\}_{0\leq k\leq w}$ of $\bar{H}$ such that its image in $Gr_{\overline{Fil}}(\bar{H})$ is $\{f'_k\}_{0\leq k\leq w}$.
By Griffiths' transversality, the connection matrix $(\bar{a}_{ij})$ representing the connection $\bar{\nabla}$ under the basis $\{\bar{f}_{k}\}$, i.e., $$\bar{\nabla}(\bar{f}_i)=\sum_j \bar{a}_{ij}\bar{f_j},$$ has the property
$\bar{a}_{ij}=0, \ j>i+1$. Now we take a matrix of differential one-forms $(a_{ij})$ over $X$ as follows: for $i\geq j$, take any lift $a_{ij}$ of $\bar{a}_{ij}$; for $j=i+1$, take $a_{ij}=\theta_i$; for $j>i+1$, take $a_{ij}=0$. Now let $H$ be the free $\sO_X$-module generated by $\{f_{k}\}_{0\leq k\leq w}$ whose mod $p^{n-1}$ reduction are $\{\bar{f}_k\}_{0\leq k\leq w}$ and let $Fil^{w-k}$ be the submodule freely generated by the elements $\{f_j \}_{0\leq j\leq k}$, which gives the filtration $Fil$ on $H$. Then a connection $\nabla$ on $H$ is determined by the formula $\nabla(f_i)=\sum_j a_{ij}f_j$. The so-constructed triple $(H,\nabla,Fil)$ is indeed an object in $MCF_{p-2}(X)$ lifting $(E,\bar H)$, i.e., $R_n(H,\nabla,Fil)=(E,\bar H)$ as required.
\end{proof}
Next we show the following
\begin{lemma}\label{landing in Shiho's category}
If $R_n(H,\nabla,Fil)$ lies in the full subcategory $\sH_{lf}(X)\subset \sH_{lf}^{ni}(X)$,
then $G_n(H,\nabla,Fil)$ lies in the full subcategory $MIC^{(-1)}_{p-3+n}(X)$.
\end{lemma}
\begin{proof}
Set $(\tilde H,\tilde \nabla):=G_n(H,\nabla,Fil)$. Once the integrability of $\tilde \nabla$ is verified, the statement on quasi-nilpotency of level $\leq p-3+n$ follows directly from the definition. As the integrability is a local property, we assume a set of \'{e}tale local coordinate $\{t_1,\cdots, t_d\}$ of $X$ and a filtered basis $\{f_k\}_{0\leq k \leq w}$ of $H$. It suffices to check that for  $1\leq i,j\leq d$, $\tilde{\nabla}(\partial t_i)$ commutates with $\tilde{\nabla}(\partial t_j)$.\\

Let $\tilde{f}_k$ be the image of $f_k$ (elements of $Fil^{k}$) in $\tilde{H}$. Then $\{\tilde{f}_k\}_{0\leq k\leq w}$ forms a basis of $\tilde{H}$. As in Lemma (\ref{local lifting}), we may assume the matrix $A:=(a_{ij})$ of $\nabla$ under the basis $\{f_k\}_{0\leq k\leq w}$ has the property that $a_{ij}=0$ for $j>i+1$. Then the matrix of $\tilde{\nabla}$ under the basis $\{\tilde{f}_k\}_{0\leq k\leq m}$ is $\tilde{A}=(\tilde{a}_{ij})$, with $\tilde{a}_{ij}=p^{i+1-j}a_{ij}$ for $j\leq i+1$, and the rest are zero. Then $\tilde{\nabla}(\partial t_i)\circ \tilde{\nabla}(\partial t_j)$  under the basis $\{\tilde{f}_k\}$ is represented by the matrix
$$ p\partial t_i(\partial t_j\tilde{A})+ (\partial t_j\tilde{A})\cdot(\partial t_i\tilde{A}).
$$
It remains to show the equality for all $i,j$:
$$
 p\partial t_i(\partial t_j\tilde{A})+ (\partial t_j\tilde{A})\cdot(\partial t_i\tilde{A})- p\partial t_j(\partial t_i\tilde{A})- (\partial t_i\tilde{A})\cdot(\partial t_j\tilde{A})=0,
$$
For $0\leq r\leq w-2$ with $t=r+1$ and $s=r+2$, the corresponding entry in the above equality means
$$
 (\partial t_i a_{rt}) (\partial t _j a_{ts})-(\partial t_j a_{rt}) (\partial t _i a_{ts}) =0,
$$
which is actually equivalent to the integrability of the Higgs field $\theta$; For $0\leq r,s\leq w$ with $s\leq r+1$, it means then
$$
p^{r+2-s} (\partial t_i\wedge \partial t_j) (da_{rs})= p^{r+2-s}\sum_{t=0}^w \left[(\partial t_i a_{rt}) (\partial t _j  a_{ts})-(\partial t_j a_{rt}) (\partial t _i a_{ts})\right].
$$
As $r+2-s\geq 1$, the above equation is implied by the following equation, which is equivalent to the integrability of $\bar{\nabla}$ :
$$
(\partial \bar{t}_i\wedge \partial \bar{t}_j) (d\bar{a}_{rs})=  \sum_{t=0}^w \left[(\partial \bar{t}_i \bar{a}_{rt}) (\partial \bar{t}_j  \bar{a}_{ts})-(\partial \bar{t}_j \bar{a}_{rt}) (\partial \bar{t}_i  \bar{a}_{ts})\right]
$$
Here $\{\bar{t}_1,\cdots,\bar{t}_d \}$ means the induced set of \'{e}tale local coordinates on $\bar X$.
\end{proof}
We shall take a step further to land the object $G_n(H,\nabla,Fil)$ in the last lemma in the category of twisted connections.
\begin{lemma}\label{extend to twisted connection}
Notation as above. Then the integrable $p$-connection of $G_n(H,\nabla,Fil)$ can be extended to a twisted connection in a functorial way.
\end{lemma}
\begin{proof}
If $(H,\nabla,Fil)$ is the mod $p^n$ reduction of an object over $W$, then it is clear how to extend $\tilde \nabla$ to a twisted connection: for $\gamma_m(D_1,\cdots,D_{p-1+m})$ with $D_i\in T_X$, one defines $\tilde \nabla(\gamma_m(D_1,\cdots,D_{p-1+m}))$ to be $\frac{\tilde \nabla_{D_1}\circ\cdots\circ\tilde\nabla_{D_{p-1+m}}}{p^m}$, which is defined by first lifting to $W$, then dividing $p^m$ and finally taking the reduction modulo $p^n$. The reason that $\tilde \nabla_{D_1}\circ\cdots \circ\tilde \nabla_{D_{p-1+m}}$ is divisible by $p^m$ is examined as follows: by Griffiths' transversality, it follows that $$\nabla'_{D_{m+1}}\circ\cdots \circ \nabla'_{D_{p-1+m}}: Fil^i\to Fil^{i+1-p}.$$  As $Fil^j=0$ for $j\geq w+1$, and $w\leq p-2$, one sees that the image of  $\nabla'_{D_{m+1}}\circ\cdots \circ \nabla'_{D_{p-1+m}}$ lies in $Fil^i$ with $i\leq -1$. In the quotient $\tilde H$, for $i\leq -1$, the image of each element in $Fil^{i-1}$ is equal to $p$ times the same element in $Fil^{i}$. Thus, since $\nabla'$ always shifts the indices of direct factors of $\oplus_iFil^i$ by minus one, the restriction of $\tilde\nabla$ to the image $\bar{Fil}^{-1}$ (which is isomorphic to $H$) of $Fil^{-1}$ in $\tilde H$ is divisible by $p$. On the other hand, this also gives us a way to extend $\tilde \nabla$ to an twisted connection in the general case. Indeed, we simply define $\tilde{\nabla} (\gamma_m(D_1,\cdots, D_{p-1+m}))$ to be the composite $\frac{\nabla'_{D_1}}{p}\circ\cdots \circ \frac{\nabla'_{D_m}}{p}\circ \nabla'_{D_{m+1}}\circ \cdots \circ \nabla'_{D_{p-1+m}}$, where the symbol $\frac{\nabla_{D_i}'}{p}$ is defined to be the map $\nabla_{D_i}$ on $\bar{Fil}^{-1}\subset \tilde H$. One verifies directly that this defines a twisted connection which extends $\tilde \nabla$.
\end{proof}
We believe the truth of the following
\begin{question}
Can $MIC^{(-1)}_{p-3+n}(X)$ be realized as a full subcategory of  $\widetilde{MIC}(X)$?
\end{question}
We shall also provide the gluing morphisms.
\begin{lemma} \label{unique}
Let $(H_i,\nabla_i,Fil_i)\in MCF_{p-2}(X), i=1,2,3$.  Suppose there are isomorphisms for $1\leq i<j\leq 3$,
$$
 \bar{f}_{ij}: (\bar{H_i},\bar{\nabla}_i,\overline{Fil}_i)\cong (\bar{H_j},\bar{\nabla}_j,\overline{Fil}_j),\quad  f^{G}_{ij}: Gr_{Fil_i}(H_i,\nabla_i)\cong Gr_{Fil_j}(H_j,\nabla_j),
 $$
satisfying
 $$
 Gr(\bar{f}_{ij})= f^G_{ij}\mod p^{n-1}.
 $$
Then there are isomorphisms in the category of $\widetilde{\textrm{MIC}}(X)$  between the objects $(\tilde{H}_i,\tilde{\nabla} _i):=G_n(H_i,\nabla_i,Fil_i)$
 $$
 \tilde{f}_{ij}: (\tilde{H}_i,\tilde{\nabla} _i)\cong (\tilde{H}_j,\tilde{\nabla}_j).
 $$
Moreover, if there are cocycle conditions
$$
 \bar{f}_{13}=\bar{f}_{23}\circ \bar{f}_{12},\quad    f^G_{13}=f^G_{23}\circ f^{G}_{12},
$$
then $\{\tilde{f}_{ij}\}$s also satisfy the cocycle condition $$\tilde{f}_{13}=\tilde{f}_{23}\circ\tilde{f}_{12}.$$
\end{lemma}
\begin{proof}
For an $s\in Fil_i^k \setminus Fil^{k+1}_i$, we denote by $\tilde{s}$ (resp. $\hat s$, $\bar s$) its image under the natural map $Fil^k_i\to \tilde{H}_i$ (resp. $Fil_i^k\to Fil_i^k/Fil_i^{k+1}$, $H_i\to \bar{H}_i$).  Consider sections of $Fil^{k}_j$ whose images under the map $H_j\to \bar{H}_j$ are equal to $\bar{f}_{ij}(\bar{s})$ and under the map $Fil_j^k\to Fil_j^k/Fil_j^{k+1}$ equal to $f_{ij}^G(\hat{s})$ at the same time. As the difference of any two such sections lies in $p^{n-1}Fil_j^{k+1}$, they give rise to a unique section $\tilde{f}_{ij}(\tilde{s})\in\tilde{H}_j$.  So we define $\tilde{f}_{ij}: \tilde H_i\to \tilde H_j$ by sending $\tilde{s}$ to $\tilde{f}_{ij}(\tilde{s})$. It is straightforward to verify the well-definedness of $\tilde{f}_{ij}$ as well as the compatibility with twisted connections. The cocycle condition for $\{\tilde f_{ij}\}$ follows directly from the definition.
\end{proof}
Now we can give the construction of the first functor. One shall notice that it does not require $X$ to be $W_{n+1}$-liftable.
\begin{proposition}\label{gluing}
Let $X$ be a smooth scheme over $S_n$. Then there is a functor $\mathcal{T}_n$ from the category $\mathcal{H}_{lf}(X)$ to the category $\widetilde{MIC}(X)$.
\end{proposition}
\begin{proof}
Let $(E,\bar H)$ be an object in $\sH_{lf}(X)$. Take an open affine covering $\sU=\{U_i\}_{i\in I}$ of $X$ such that $E$ and $\bar H$ are free modules over each $U_i$. By Lemma \ref{local lifting}, we can take an object $(H_i,\nabla_i,Fil_i)\in MCF_{p-2}(U_i)$ for $U_i\in \sU$ such that $R_n(H_i,\nabla_i,Fil_i)=(E,\bar H)|_{U_i}$ (we call $(H_i,\nabla_i,Fil_i)$ a local lifting).  Since over $U_i\cap U_j$, the restrictions of $(H_i,\nabla_i,Fil_i)$ and $(H_j,\nabla_j,Fil_j)$ are both local liftings of $(E,\bar H)|_{U_i\cap U_j}$, there are isomorphisms by Lemma \ref{unique}
$$
\tilde f_{ij}: G_n(H_i,\nabla_i,Fil_i)|_{U_i\cap U_j}\cong G_n(H_j,\nabla_j,Fil_j)|_{U_i\cap U_j}
$$
satisfying the cocycle condition on $U_i\cap U_j\cap U_k$. Thus gluing the local objects $ \{G_n(H_i,\nabla_i,Fil_i)\}_{U_i\in \sU}$ in $\widetilde{MIC}(U_i)$ via the isomorphisms $\{\tilde{f}_{ij}\}$ yields an object in $\widetilde{MIC}(X)$. We denote a so-constructed object by $(\tilde H,\tilde \nabla)_{\sU,\sL_{\sU}}$, where $\sU$ denotes for an open affine covering of $X$ and $\sL_{\sU}$ consists of a local lifting of $(E,\bar H)$ restricted to each $U\in \sU$. We need to show that this object is independent of the choice of local liftings and the choice of open affine coverings up to canonical isomorphism. For two sets of local liftings $\sL^1_{\sU}=\{(H^1_i,\nabla^1_i,Fil_i^{1})\}_{i\in I}$ and  $\sL^2_{\sU}=\{(H^2_i,\nabla^2_i,Fil_i^{2})\}_{i\in I}$, Lemma \ref{unique} provides an isomorphism over each $U_i\in \sU$:
$$
\mu_i: G_n(H^1_i,\nabla^1_i,Fil_i^{*1})\cong  G_n(H^2_i,\nabla^2_i,Fil_i^{*2})
$$
and over $U_i\cap U_j$, the equality $\tilde f^{2}_{ij}\circ \mu_i=\mu_j\circ \tilde f^{1}_{ij}$ holds. So the set of local isomorphisms $\{\mu_i\}_{i\in I}$ glues into a global isomorphism
$$\mu: (\tilde H,\tilde \nabla)_{\sU,\sL_{\sU}}\cong (\tilde H,\tilde \nabla)_{\sU,\sL'_{\sU}}.$$
One can further show that for any three choices of local liftings, the so-obtained isomorphisms satisfy the cocycle relation. So the object $(\tilde H,\tilde \nabla)_{\sU,\sL_{\sU}}$ is independent of the choice of local liftings up to canonical isomorphism and therefore can be denoted by $(\tilde H,\tilde \nabla)_{\sU}$. Next for any two choices $\sU,\sU'$ of open affine coverings of $X$, we find a common refinement $\sU''$ of both. Again, by Lemma \ref{unique}, one constructs an isomorphism
$$
\nu_{\sU,\sU''}: (\tilde H,\tilde \nabla)_{\sU}\cong (\tilde H,\tilde \nabla)_{\sU''}
$$
and similarly $\nu_{\sU',\sU''}$. Then define the isomorphism
$$
\nu_{\sU,\sU'}=\nu_{\sU',\sU''}^{-1}\circ \nu_{\sU,\sU''}: (\tilde H,\tilde \nabla)_{\sU}\cong (\tilde H,\tilde \nabla)_{\sU'}
$$
which satisfies also the cocycle relation for any three choices of open affine coverings (another way to remove this independence is to take the direct limit with respect to the directed set of all open affine coverings with the partial order given by refinement). Thus, we get our functor $\sT_n$ by associating $(\tilde H,\tilde \nabla)$ to $(E,\bar H)$  as above which is defined up to canonical isomorphism.
\end{proof}

{\itshape Second approach.}\\

This approach is due to the referee, who gave us a global construction of the functor from the whole category $\sH(X)$ to $MIC^{(-1)}(X)$ in the weight one case. We shall generalize his/her method in the following.\\

Notation as above. For an object $(E,\theta,\bar{H},\bar{\nabla},\overline{Fil},\bar{\psi})\in \mathcal{H}(X)$, one notices that the isomorphism of graded Higgs modules $\bar{\psi}: Gr_{\overline{Fil}}(\bar{H},\bar{\nabla)}\cong (\bar{E},\bar{\theta})$ allows us to define the following composite morphism:
$$
j: \overline{Fil}^i\to \overline{Fil}^i/\overline{Fil}^{i+1}\stackrel{\bar \psi}{\cong} \bar E^i.
$$
Let $\eta: E^i\to \bar E^i$ be the natural projection by the reduction modulo $p^{n-1}$. Let $\overline{Fil}^i\times_{\bar{E}^i}E^i$ denote the kernel of the morphism of $\sO_X$-modules:
$$
 \begin{CD}
 \overline{Fil}^i\oplus E^i@>-j+\eta>> \bar{E}^i.
 \end{CD}
$$
We are going to associate to $(E,\bar H)$ a twisted flat module $(H^{\sharp},\nabla^{\sharp})$. The $\sO_X$-module $H^{\sharp}$ is defined to be the cokernel of the morphism
$$
\begin{CD}
 \oplus_{i}(\overline{Fil}^{i}\times_{\bar{E}^{i}} E^{i})@>\epsilon>> \oplus_{i}(\overline{Fil}^{i}\times_{\bar{E}^{i}} E^{i}),
 \end{CD}
 $$
where for $x\times y\in \overline{Fil}^{i+1}\times_{\bar{E}^{i+1}} E^{i+1}$,
$$
\epsilon(x\times y)=(x\times0)+ (-px\times-py) \in (\overline{Fil}^{i}\times_{\bar{E}^{i}} E^{i})\oplus (\overline{Fil}^{i+1}\times_{\bar{E}^{i+1}} E^{i+1}).
$$
Let $c: \oplus_{i}(\overline{Fil}^{i}\times_{\bar{E}^{i}} E^{i})\to H^{\sharp}$ be the natural morphism. We define a $W_n$-linear additive map (not a connection) as follows:
\begin{equation}\label{connection map}
\nabla'': \oplus_{i}(\overline{Fil}^{i}\times_{\bar{E}^{i}} E^{i})\to \oplus_{i}(\overline{Fil}^{i-1}\times_{\bar{E}^{i-1}} E^{i-1})\otimes \Omega_{X},
\end{equation}
which takes $x\times y\in  \overline{Fil}^{i}\times_{\bar{E}^{i}} E^{i}$ to $\bar{\nabla}(x)\times \theta(y)\in (\overline{Fil}^{i-1}\times_{\bar{E}^{i-1}} E^{i-1})\otimes\Omega_{X}$. Here we have used the natural identification
$$
(\overline{Fil}^{i-1}\otimes \Omega_{\bar{X}})\times_{(\bar{E}^{i-1}\otimes\Omega_{\bar{X}})} (E^{i-1}\otimes\Omega_{X})\cong(\overline{Fil}^{i-1}\times_{\bar{E}^{i-1}} E^{i-1})\otimes\Omega_{X}.
$$
We shall show that this map descends to a $p$-connection over $H^{\sharp}$.
\begin{lemma}
Let $A_i$ be the image of $\overline{Fil}^{i}\times_{\bar{E}^{i}} E^{i}$ under the map $\epsilon$. Then $$\nabla''(A_i)\subset A_{i-1}\otimes \Omega_{X}$$ and the induced $W_n$-linear additive map $\nabla^\sharp:  H^\sharp\to  H^\sharp\otimes \Omega_{X}$ is an integrable $p$-connection.
\end{lemma}
\begin{proof}
For $x\times y \in \overline{Fil}^{i}\times_{\bar{E}^{i}} E^{i}$,  one computes that
$$\nabla''(\epsilon(x\times y))=  \bar{\nabla}(x)\times0+  \bar{\nabla}(-px)\times \theta(-py)=\epsilon(\bar \nabla (x)\times \theta(y)),$$ which
lies in  $A_{i-1}\otimes \Omega_{X}$. Therefore, $\nabla''$ induces an additive $W_n$-linear map $\nabla^\sharp$ on $H^\sharp$. It suffices to show $\nabla^\sharp$ is a $p$-connection, as the integrability follows directly from the integrabilities of $\bar \nabla$ and $\theta$. For $f\in \sO_X$ and $x\times y\in \overline{Fil}^{i}\times_{\bar{E}^{i}} E^{i}$,  it is reduced to show the equality
$$
\nabla^\sharp(fc(x\times y))= f\nabla^\sharp(c(x\times y))+ pc(x\times y)\otimes df.
$$
Because
\begin{eqnarray*}
\nabla''(f(x\times y))&=&\nabla''(\bar{f}\cdot x\times f\cdot y )=\bar{\nabla}(\bar{f}\cdot x)\times \theta(fy)\\
&=& (d\bar{f}\cdot x+ \bar{f}\bar{\nabla}(x))\times f\theta(y)= d\bar{f}\cdot x\times 0 + \bar{f}\bar{\nabla}(x)\times f\theta(y)\\
&=&  (x\times 0)\otimes df + f\nabla''(x\times y),
\end{eqnarray*}
where $\bar f\in \sO_{\bar X}$ is the mod $p^{n-1}$-reduction of $f$, and because
$c(x\times 0)=c(p(x\times y))$, the required equality follows.
\end{proof}
\begin{lemma}\label{twisted connection in second approach}
Notation as above. The $p$-connection $\nabla^\sharp$  extends to a twisted connection.
\end{lemma}
\begin{proof}
This step is similar to Lemma \ref{extend to twisted connection}. Let $D_1,\cdots, D_{p-1+m}\in T_{X}$ for $m\geq 0$. Note that
$$
\nabla''_{D_{m+1}}\circ\cdots \circ \nabla''_{D_{p-1+m}} :\overline{Fil}^i\times_{\bar{E}^i}E^i\to \overline{Fil}^{i+1-p}\times_{\bar{E}^{i+1-p}}E^{i+1-p},
$$
and as $\overline{Fil}^i=0$, $E^i=0$ for $i\geq w+1$ and $w\leq p-2$, the image of   $\nabla''_{D_{m+1}}\circ\cdots \circ \nabla''_{D_{p-1+m}}$ lies in  $\overline{Fil}^i\times_{\bar{E}^i}E^i$ for $i\leq -1$. As $\overline{Fil}^{i}=\bar{H}$ and $\bar{E}^{i }=E^{i}=0$ for $i\leq -1$, one defines the map
$$
 \frac{\nabla''}{p}:=\bar{\nabla}\times 0: \overline{Fil}^i\times_{\bar{E}^i}E^i\to \overline{Fil}^i\times_{\bar{E}^i}E^i\otimes \Omega_{X}, \qquad i\leq -1.
$$
Then the map
$$
\nabla^\sharp(\gamma_m(D_1,\cdots, D_{p-1+m})):= \frac{\nabla''_{D_1}}{p}\circ\cdots \circ \frac{\nabla''_{D_m}}{p}\circ \nabla''_{D_{m+1}}\circ \cdots \circ \nabla''_{D_{p-1+m}}
$$
gives rise to a twisted connection extending the $p$-connection $\nabla^{\sharp}$.
 \end{proof}

 {\itshape Equivalence.}

 \begin{proposition}
The functor $\sT_n^{\sharp}$, restricted to the subcategory $\sH_{lf}(X)$, is naturally equivalent to the functor $\sT_n$.
 \end{proposition}
\begin{proof}
Given an object $(E,\bar{H})\in\mathcal{H}_{lf}(X)$,  we set
$$
 (\tilde{H},\tilde{\nabla} ):=\mathcal{T}_n(E,\bar{H}), \quad (H^\sharp,\nabla^\sharp):=\mathcal{T}^\sharp_n(E,\bar{H})
$$
defined as above. We are going to exhibit a natural isomorphism $$\lambda: (H^\sharp,\nabla^\sharp)\cong (\tilde{H},\tilde{\nabla}).$$ 
Without loss of generality, we may assume that there is an object $(H ,\nabla , Fil)\in MCF_{p-2}$ such that $R_n(H,\nabla,Fil)=(E,\bar H)$ and $G_n(H,\nabla,Fil)=(\tilde H,\widetilde{\nabla})$. Consider the following commutative diagram:
\begin{equation}\label{diagram}
\xymatrixcolsep{6pc}
\xymatrix{
               &   \oplus_{i}Fil^{i}/pFil^{i}  \ar[ld]_{p^{n-1}} \ar[d]^{p^{n-1}\cdot[-1]} \ar[rd]^{0}       \\
\oplus_{i}Fil^{i} \ar[r]^{[-1]-p\cdot Id}\ar@{->>}[d]_{\eta\times j} \ar[rd]^{\kappa} & \oplus_{i} Fil^i\ar@{->>}[r]^{\rho}\ar@{->>}[d]^{\eta\times j}&   \tilde{H} \\
\oplus_{i}\overline{Fil}^{i}\times_{\bar{E}^{i}}E^{i}\ar[r]_{[-1]\times0-p\cdot Id}&\oplus_{i}\overline{Fil}^{i}\times_{\bar{E}^{i}}E^{i}\ar@{->>}[ru]_{\bar{c}}.}
\end{equation}
The middle row is (\ref {defining tilde H}), the exact sequence defining $\tilde{H}$.  The middle column is also an exact sequence because of locally free assumption, where the map $\eta$ denotes the reduction map by mod $p^{n-1}$ and $j$ the natural projection $Fil^i\to Fil^i/Fil^{i+1}=E^i$. By the commutativity of left upper triangle, we see  that the map $\rho$ factors through the surjective map $\bar{c}$, so that there is an exact sequence
$$
\begin{CD}
\oplus_{i}Fil^{i}@>\kappa>>\oplus_{i}\overline{Fil}^{i}\times_{\bar{E}^{i}}E^{i}@>\bar{c}>>\tilde{H}@>>>0.
\end{CD}
$$
By the commutativity of  the left lower triangle and the surjectivity of the left vertical map $\eta\times j$, the above exact sequence is replaced by the following exact sequence:
$$
\begin{CD}
\oplus_{i}\overline{Fil}^{i}\times_{\bar{E}^{i}}E^{i}@>[-1]\times0-p\cdot Id>>\oplus_{i}\overline{Fil}^{i}\times_{\bar{E}^{i}}E^{i}@>\bar{c}>>\tilde{H}@>>>0.
\end{CD}
$$
Thus there is a unique isomorphism $\lambda: H^{\sharp}\to \tilde H$, to render the following diagram commutative
$$\xymatrixcolsep{6pc}
\xymatrix{
 \oplus_{i}\overline{Fil}^{i}\times_{\bar{E}^{i}}E^{i}\ar[r]^{[-1]\times0-p\cdot Id}&  \oplus_{i}\overline{Fil}^{i}\times_{\bar{E}^{i}}E^{i}\ar@{->>}[r]^{\bar{c}}\ar@{->>}[dr]^{c} &\tilde{H}    \\
 & & H^\sharp\ar[u]^{\wr}_{\lambda}}.
$$
The isomorphism $\lambda$  is compatible with twisted connections. Indeed, the twisted connection $\tilde{\nabla}$ on $\tilde{H}$ is induced by the maps
$$
\nabla': Fil^i\to Fil^{i-1}\otimes \Omega_{X_n},  \qquad i\geq 0,
$$
and
$$
\frac{\nabla'}{p} : Fil^j\to Fil^{j}\otimes \Omega_{X_n},  \qquad j\leq -1,
$$
while $ \nabla^\sharp$ on $H^\sharp$ by the formula (\ref{connection map}) and $\frac{\nabla''}{p}$ in Lemma \ref{twisted connection in second approach}. Since the vertical map $\eta\times j$ in the middle column of diagram (\ref{diagram}) is compatible with the map $\nabla'$ on $\oplus_{i}Fil^i$ and the map $\nabla''$ on $\oplus_{i}\overline{Fil}^{i}\times_{\bar{E}^{i}}E^{i}$, and it is also compatible with the map $\frac{\nabla'}{p}$ on $\oplus_{j\leq-1}Fil^j$ and the map $\frac{\nabla''}{p}$ on $\oplus_{j\leq -1}\overline{Fil}^{j}\times_{\bar{E}^{j}}E^{j}$, it follows that the map $\lambda$ is compatible with $\widetilde{\nabla}$ on $\tilde{H}$ and $ \nabla^\sharp$ on $ H^\sharp$.
\end{proof}
This completes the construction of the first functor from $\sH(X_n')$ to $\widetilde{MIC}(X_n')$, by setting $X=X_n'$ in the above two approaches. \\

Next, we are going to construct the second functor $\sF_n: \widetilde{MIC}(X_n')\to MIC(X_n)$. In contrast to the first functor, it requires the $W_{n+1}$-liftability of $X'_n$ and depends on the choice of $W_{n+1}$-liftings. Again, the method of ``local lifting-global gluing'' in the first approach of the former functor will be applied. Here ``local lifting'' means a local lifting of the relative Frobenius and the gluing morphisms are provided by the difference of two relative Frobenius liftings via the Taylor formula. We shall remind that A. Shiho \cite{Shiho} has obtained the functor from the category of quasi-nilpotent modules with integrable $p$-connections over $X_n'$ to the category of flat modules over $X_n$ under the assumptions both on the existence of $W_{n+1}$-liftings of $X_n'$, $X_n$ and on the existence of the relative Frobenius lifting over $W_{n+1}$. His construction is closely related to ours in the local case, but they were independently obtained.
\begin{proposition}\label{second functor}
Let $X$ be a smooth scheme over $S_{n}$ and $X'=X\times_{F_{S_n}}S_n$. Assume $X'$ admits a smooth lifting $\tilde X'$ over $S_{n+1}$. Then there is a functor $\mathcal{F}_n$ from the category of $\widetilde{MIC}(X')$ to the category $MIC(X)$.
\end{proposition}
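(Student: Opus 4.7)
My plan is to extend the ``local lifting plus global gluing'' strategy used for $\sT_n$, but now the local data are liftings of the relative Frobenius rather than liftings of an object of $MCF_{p-2}$. The underlying $\sO_X$-module of $\sF_n(H,\tilde\nabla)$ will be $F^*H$, where $F\colon X\to X'$ is the relative Frobenius, and the connection will be assembled from local connections built from local Frobenius lifts, with the gluing data on overlaps provided by a Taylor-formula automorphism analogous to the one appearing in \S3.

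First I would cover $X$ by small affine opens $\{U_\alpha\}$ so that each $U_\alpha$ admits a smooth $W_{n+1}$-lift $\tilde U_\alpha$ and a lift $\tilde F_\alpha\colon \tilde U_\alpha\to \tilde X'$ of $F|_{U_\alpha}$; both lifts exist over an affine because smoothness lifts and a morphism from an affine into a smooth target lifts once its source does. For each choice, I would define a $W_n$-linear map $\nabla_\alpha\colon F^*H|_{U_\alpha}\to F^*H|_{U_\alpha}\otimes \Omega_{U_\alpha}$ by composing the restriction of $\tilde\nabla$ to $T_{X'}\subset \widetilde{\mathcal{D}}_{X'}$ (which is a $p$-connection in the sense of Shiho) with the morphism $d\tilde F_\alpha/p\colon F^*\Omega_{X'}\to \Omega_{X}$; the latter is well defined modulo $p^n$ because $\tilde F_\alpha$ reduces to the Frobenius modulo $p$, so $d\tilde F_\alpha$ takes values in $p\,\Omega_{\tilde U_\alpha}$ and $\Omega_{\tilde U_\alpha}$ is $p$-torsion free. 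The factor of $p$ in the $p$-connection Leibniz rule $\tilde\nabla(fh)=pdf\otimes h + f\tilde\nabla(h)$ is exactly cancelled by $d\tilde F_\alpha/p$, so $\nabla_\alpha$ is a genuine connection by the chain rule; integrability follows from integrability of $\tilde\nabla$ together with the Lie-algebroid relation $\{D_1,D_2\}=p[D_1,D_2]$ in $\widetilde{\mathcal{D}}_{X'}$.

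On an overlap $U_{\alpha\beta}$ the two Frobenius lifts differ by $z_k:=\tilde F_\alpha^*\tilde t'_k-\tilde F_\beta^*\tilde t'_k\in p\,\sO_{\tilde U_{\alpha\beta}}$ for any system of \'etale local coordinates $\{\tilde t'_k\}$ on $\tilde X'$, hence $w_k:=z_k/p\in \sO_{\tilde U_{\alpha\beta}}$. I would define the gluing automorphism of $F^*H|_{U_{\alpha\beta}}$ by the Taylor formula
$$
\epsilon_{\alpha\beta}(h\otimes 1)=\sum_{\underline k\geq 0}\tilde\nabla^{\underline k}_{\partial_{\tilde t'}}(h)\otimes\frac{z^{\underline k}}{p^{|\underline k|}\underline k!}=\sum_{\underline k\geq 0}\tilde\nabla^{\underline k}_{\partial_{\tilde t'}}(h)\otimes\frac{w^{\underline k}}{\underline k!}.
$$
For $|\underline k|\leq p-1$ the factorial $\underline k!$ is a unit, so the summand is patently integral. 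For $|\underline k|\geq p$ the twisted structure becomes essential: one rewrites $\tilde\nabla^{\underline k}_{\partial_{\tilde t'}}=p^{|\underline k|-p+1}\,\tilde\nabla(\gamma_{|\underline k|-p+1}(\partial_{\tilde t'}^{\underline k}))$ using relation (1) of the definition of $\widetilde{\mathcal{D}}_{X'}$, and the standard estimate for the $p$-adic valuation of $\underline k!$ (bounded by $|\underline k|/(p-1)$) gives a net $p$-valuation growing like $|\underline k|(p-2)/(p-1)-(p-1)\to\infty$. So every summand is integral, only finitely many are nonzero modulo $p^n$, and the formula defines an honest $\sO_{U_{\alpha\beta}}$-linear automorphism of $F^*H|_{U_{\alpha\beta}}$.

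The remaining steps are (i) $\epsilon_{\alpha\beta}$ intertwines $\nabla_\beta$ and $\nabla_\alpha$, (ii) the cocycle identity $\epsilon_{\alpha\gamma}=\epsilon_{\alpha\beta}\circ\epsilon_{\beta\gamma}$ on triple overlaps, and (iii) independence (up to canonical isomorphism) of the choices of affine cover, of local lifts $\tilde U_\alpha$, and of local Frobenius lifts $\tilde F_\alpha$; functoriality of $\sF_n$ in $(H,\tilde\nabla)$ is then automatic because every step is canonical given the local choices. I expect steps (i)--(iii) to reduce, after the usual multinomial expansion and the relations $\{D_1,D_2\}=p[D_1,D_2]$ and $p^m\gamma_m=D_1\cdots D_{p-1+m}$, to formal Taylor-series identities. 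The hard part will be the $p$-adic bookkeeping: verifying that the formal manipulations indeed yield $p$-adically integral operators and that the countably many relations collapse correctly modulo $p^n$. Once this is established, the globally defined flat module $\sF_n(H,\tilde\nabla)$ obtained by gluing is the desired functor.
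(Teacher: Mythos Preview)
Your approach is essentially the same as the paper's: local Frobenius lifts give local connections, the Taylor formula provides gluing, and the twisted-connection structure ensures $p$-adic convergence of the divided-power terms. The convergence estimate, the cocycle verification, and the compatibility with connections all proceed as you indicate.

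There is one conceptual slip worth correcting. You write that the underlying $\sO_X$-module is $F^*H$ for ``the relative Frobenius $F\colon X\to X'$'', and you treat the gluing maps $\epsilon_{\alpha\beta}$ as automorphisms of the fixed bundle $F^*H|_{U_{\alpha\beta}}$. But for $n\geq 2$ there is in general no global Frobenius lift $F\colon X\to X'$ over $S_n$; this is precisely the difficulty the construction is designed to circumvent. What you actually have are local lifts $F_\alpha:=\tilde F_\alpha\otimes\Z/p^n\Z\colon U_\alpha\to U'_\alpha$, and the local modules $F_\alpha^*(H|_{U'_\alpha})$ are \emph{not} canonically restrictions of a single global pullback. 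The Taylor map $\epsilon_{\alpha\beta}$ is an isomorphism $F_\beta^*H|_{U'_{\alpha\beta}}\to F_\alpha^*H|_{U'_{\alpha\beta}}$ between genuinely different $\sO_{U_{\alpha\beta}}$-modules, not an automorphism of a fixed one. The paper makes this explicit by introducing, on each overlap, an auxiliary Frobenius lift $\tilde F_{ij}$ and writing $G_{ij}=\alpha_j\circ\alpha_i^{-1}$ as a composite of two Taylor isomorphisms; your direct formula works too, but you should phrase it as an isomorphism between the two pullbacks. Once this is straightened out, the rest of your outline goes through exactly as in the paper.
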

\begin{proof}
We divide the whole construction into several small steps.  \\

{\itshape Step 0. } For convenience, let us introduce $\tilde X=\tilde X'\times_{F_{S_{n+1}}^{-1}}S_{n+1}$, which lifts $X$ and will be used in Step 2.  Let $\tilde \sU'=\{\tilde U'_i\}_{i\in I}$ be an open affine covering of $\tilde X'$ (assume each member is flat over $S_{n+1}$). Then by the obvious base change, it induces an open affine covering $\tilde \sU=\{\tilde U_i\}$ of $\tilde X$, and by reduction modulo $p^{n}$, open affine coverings $\sU'$ (resp. $\sU$) of $X'$ (resp. $X$). For each $i\in I$, we take a morphism $\tilde F_i: \tilde U_i\to \tilde U_i'$ over $S_{n+1}$ lifting the relative Frobenius $\tilde U\otimes \Z/p\Z\to \tilde U'\otimes \Z/p\Z$ over $k$ and denote $\tilde F_i\otimes \Z/p^{n}\Z$ by $F_i$. Given a twisted flat module $(\tilde{H},\tilde{\nabla} )$ over $X'$, its restriction to $U'_i$ is denoted by $(\tilde H_i,\tilde \nabla_i)$.  \\

{\itshape Step 1.} Each $\tilde F_i$ defines a morphism over $S_n$:
$$
\frac{d\tilde F_{i}}{p}:F_{i}^*\Omega_{U_{i}'}\to \Omega_{U_{i}}.
$$
Put $H_i:=F_{i}^*\tilde{H}_i$. As discussed before (see the paragraph after Definition \ref{quasi-nilpotence}), one can naturally regard $\tilde \nabla_i$ as an integrable $p$-connection on $\tilde H_i$. By doing so, we can consider the following formula:
\begin{align}\label{formula for connection}
\nabla_i(f\otimes e):=df\otimes e + f\cdot (\frac{d\tilde F_{i}}{p}\otimes1)( 1\otimes \tilde{\nabla}_i(e)),\quad f\in\mathcal{O}_{U_{i}},\ e\in\tilde{H}_i.
\end{align}
 \begin{claim}\label{def of connection from p-connection}
 $\nabla_i$ is well-defined and it defines an integrable connection on $H_i$.
 \end{claim}
 \begin{proof}
One computes that
 \begin{eqnarray*}
 \nabla_i(1\otimes fe)&=&(\frac{d\tilde F_{i}}{p}\otimes1)( 1\otimes \tilde{\nabla_i}(fe))=(\frac{d\tilde F_{i}}{p}\otimes1)( 1\otimes pdf\cdot e + 1\otimes f\cdot\tilde{\nabla_i}(e))\\
 &=&d(F_{i}^*f)\otimes e+ F_{i}^*f\cdot (\frac{d\tilde F_{i}}{p}\otimes1)( 1\otimes \tilde{\nabla_i}(e))=\nabla_i(F_{i}^*f\otimes e ),
\end{eqnarray*}
which shows the well-definedness. Let $\{t_1,\cdots t_d\}$ be a set of \'{e}tale local coordinates of $U_{i}$ and $\{t'_{\alpha}\}_{1\leq \alpha\leq d}$ the corresponding one of $U'_i$. Note for $1\leq j,k \leq d$ and any $e\in\tilde{H}_i$,
 $$
 \nabla_i(\partial t_j)(1\otimes e)= \sum_{\alpha=1}^d\left(\partial t_j \frac{d\tilde F_{i}}{p}(1\otimes dt'_{\alpha})\right)\otimes \tilde{\nabla}(\partial t'_{\alpha})(e).
 $$
So one computes that
$$
\begin{CD}
 \nabla_i(\partial t_k)\circ \nabla_i(\partial t_j)(1\otimes e)=\sum_{\alpha=1}^{d} \partial_{t_k}\left(\partial t_j  \frac{d\tilde F_{i}}{p}(1\otimes dt'_{\alpha})\right)\otimes \tilde{\nabla} (\partial t'_{\alpha})(e)\\
 +\sum_{1\leq \alpha,\beta\leq d}\left(\partial t_j  \frac{d\tilde F_{i}}{p}(1\otimes dt'_{\alpha})\right)\cdot \left(\partial t_k  \frac{d\tilde F_{i}}{p}(1\otimes dt'_{\beta})\right)\otimes \tilde{\nabla} (\partial t'_{\beta})\circ\tilde{\nabla} (\partial t'_{\alpha})(e)\\
 =\sum_{\alpha=1}^{d} \partial_{t_j}\left(\partial t_k  \frac{d\tilde F_{i}}{p}(1\otimes dt'_{\alpha})\right)\otimes \tilde{\nabla} (\partial t'_{\alpha})(e)\\
 +\sum_{1\leq \alpha,\beta\leq d}\left(\partial t_j  \frac{d\tilde F_{i}}{p}(1\otimes dt'_{\alpha})\right)\cdot \left(\partial t_k  \frac{d\tilde F_{i}}{p}(1\otimes dt'_{\beta})\right)\otimes \tilde{\nabla} (\partial t'_{\alpha})\circ\tilde{\nabla} (\partial t'_{\beta})(e)\\
=\nabla_i(\partial t_j)\circ \nabla_i(\partial t_k)(1\otimes e).
\end{CD}
$$
In the above second equality we have used the integrability of $\tilde \nabla_i$, i.e., $\tilde{\nabla}_i (\partial t'_\alpha)$ commutes with  $\tilde{\nabla}_i (\partial t'_\beta)$. Then  for any $f\in\mathcal{O}_{U_{i}}$ and any $e\in\tilde{H} _i$, one has

{\small
 $$
 \begin{CD}
 \nabla_i(\partial t_k)\circ \nabla_i(\partial t_j)(f\otimes e)\\
 = \frac{\partial^2 f}{\partial t_k\partial t_j}\otimes e + \frac{\partial f}{\partial t_j}\cdot\nabla_i(\partial t_k)(1\otimes e)+
 \frac{\partial f}{\partial t_k}\cdot\nabla_i(\partial t_j)(1\otimes e)+ f\cdot\nabla_i(\partial t_k)\nabla_i(\partial t_j)(1\otimes e)\\
  = \frac{\partial^2 f}{\partial t_j\partial t_k}\otimes e + \frac{\partial f}{\partial t_j}\cdot\nabla_i(\partial t_k)(1\otimes e)+
 \frac{\partial f}{\partial t_k}\cdot\nabla_i(\partial t_j)(1\otimes e)+ f\cdot\nabla_i(\partial t_j)\nabla_i(\partial t_k)(1\otimes e)\\
 =\nabla_i(\partial t_j)\circ \nabla_i(\partial t_k)(f\otimes e).
  \end{CD}
 $$
}

 So $\nabla_i(\partial t_k)$ commutes with $\nabla_i(\partial t_j)$ for $1\leq k,j \leq d$, that is,  $\nabla_i$ is integrable as claimed.
 \end{proof}
{\itshape Step 2.}  The previous step provides us a set of local flat modules $\{(H_i,\nabla_i)\}_{i\in I}$ and we want to glue them into one global flat module. The point is to use the Taylor formula involving the difference of two relative Frobenius liftings to construct the gluing morphisms. This is well-known if $p$ is non-nilpotent in the base ring, and we use the formalism of a twisted connection to make sense of the $p$-powers in the denominators of the Taylor formula. \\

Let $\tilde U_i=\Spec\ \tilde R_i, i=1,2$ be two smooth schemes over $S_{n+1}$, equipped with a lifting of relative Frobenius $\tilde F_i: \tilde U_i\to \tilde U'_i$. Assume there is a set of \'{e}tale local coordinates $\{\tilde t_i\}$ for $\tilde U_1$ and suppose there is a morphism $\iota: \tilde U_2\to \tilde U_1$ over $S_{n+1}$, which induces the morphism $\iota': \tilde U'_2\to \tilde U'_1$. Let $R_i=\tilde R_i\otimes \Z/p^{n}\Z$, $U_i=\Spec\ R_i$ and $F_i=\tilde F_i\otimes \Z/p^{n}\Z$. Let $\{\tilde t'_i\}$ (resp. $\{t_i\}$) be the induced coordinate functions on $\tilde U'_1$ (resp. $U_1$). Given a twisted flat module $(\tilde H,\tilde \nabla)$ over $U'_1$, there is an isomorphism of $R_2$-modules $G_{21}: F_2^*\iota'^*\tilde H\cong \iota^*F_1^*\tilde H$ given by the Taylor formula
\begin{equation}\label{gluing function}
G_{21}(e\otimes 1)=\sum_{J}\frac{\tilde{\nabla}(\partial)^J}{J!}(e)\otimes z^{J}, \ e\in \tilde H,
\end{equation}
where $J$ is a multi-index $J:=(j_1,\cdots, j_d)$ with each component $j_l\geq 0$,  $J!:=\prod_{l=1}^{d}j_l!$, and
$$
\frac{\tilde{\nabla}(\partial)^J}{J!}:= \frac{(\tilde{\nabla}(\partial t'_1))^{j_1}}{j_1!}\circ\cdots \circ \frac{(\tilde{\nabla}(\partial t'_d))^{j_d}}{j_d!}
$$
with
$$
z^J:=\prod_{l=1}^dz_l^{j_l}, \quad
 z_l:= \frac{(\tilde F_2^*\iota'^*)(\tilde t'_l)-(\iota^*\tilde F_1^*)(\tilde t'_l)}{p}.
$$
As already explained right after Definition \ref{quasi-nilpotence}, $\frac{\tilde{\nabla}(\partial)^J}{J!}$ converges to zero $p$-adically as $|J|\to\infty$, the summation in the formula (\ref{gluing function}) is actually a finite sum. There is the cocycle relation between the isomorphisms for three objects, namely the equality $G_{31}=G_{21}\circ G_{32}$ holds. The proof is mostly formal: suppressing $\iota$s in the Taylor formula (\ref{gluing function}) and writing $\hat{z}$ (resp. $\tilde{z}$) for the $z$-function appeared in $G_{32}$ (resp. $G_{31}$). Then as $\tilde{z}=z+\hat{z}$, one calculates that
\begin{eqnarray*}
G_{21}\circ G_{32}(e\otimes1)&=&\sum_{I}\sum_{J}\frac{\tilde{\nabla}(\partial)^{I+J}}{I!J!}(e)\otimes  z^J \cdot  \hat{z}^I\\
 &=&\sum_{K}\frac{\tilde{\nabla}(\partial)^K}{K!}(e)\otimes \left(\sum_{I+J=K}\frac{K!}{I!J!}\cdot z^J\cdot\hat{z}^I \right) \\
 &=&\sum_{K}\frac{\tilde{\nabla}(\partial)^K}{K!}(e)\otimes(z+\hat{z})^K\\
 &=&\sum_{K}\frac{\tilde{\nabla}(\partial)^K}{K!}(e)\otimes \tilde{z}^K\\
 &=&G_{31}(e\otimes 1).
\end{eqnarray*}
The cocycle condition follows. Returning to our case, we shall apply the Taylor formula in the following way: write $\tilde U_{ij}=\tilde U_{i}\cap \tilde U_j$ and take a relative Frobenius lifting $\tilde F_{ij}: \tilde U_{ij}\to \tilde U'_{ij}$. Let $\iota_{1}: \tilde U_{ij}\to \tilde U_i$ and $\iota_2: \tilde U_{ij}\to \tilde U_j$ be the natural inclusions. Then we obtain the isomorphisms $\alpha_i$ and $\alpha_j$ in the following diagram:
$$
\begin{tikzcd}
\,&F_{ij}^*\tilde H|_{U'_{ij}}\arrow{rd}{\alpha_j}[swap]{\cong}\arrow{ld}{\cong}[swap]{\alpha_i}\\
H_i|_{U_{ij}}=F_i^*\tilde H|_{U'_{ij}}&&F_j^*\tilde H|_{U'_{ij}}=H_j|_{U_{ij}}
\end{tikzcd}
$$
The we define the isomorphism $G_{ij}:=\alpha_j\circ \alpha_i^{-1}: H_i|_{U_{ij}}\to H_j|_{U_{ij}}$ and the set of isomorphisms $\{G_{ij}\}$ satisfies the cocycle condition. There is one more property of $G_{ij}$, namely, it is compatible with connections. For $1\leq l\leq d$, let us identify $l$ with the multiple index $(0,\cdots ,1,\cdots, 0)$ with 1 at the $l$-th position. Then one calculates that
$$
(Id\otimes G_{ij})(\nabla_i(e\otimes 1))=\sum_{J}\sum_{l=1}^d\frac{ \tilde{\nabla}(\partial)^{J+l}}{J!}(e)\otimes z^J\cdot \frac{d\tilde F_{i}}{p}(dt'_l) ,
$$
and
$$
\nabla_j(G_{ij}(e\otimes 1))=\sum_{J}\sum_{l=1}^d\frac{\tilde{\nabla}(\partial)^{J+l}}{J!}(e)\otimes z^J\cdot \left( \frac{d\tilde F_{j}}{p}(dt'_l)
+ dz_l\right).
$$
As
$$
 dz_l=\frac{d\tilde F_{i}}{p}(dt'_l)-\frac{d\tilde F_{j}}{p}(dt'_l),
$$
it follows that
$$
(Id\otimes G_{ij})(\nabla_i(e\otimes 1))=\nabla_j(G_{ij}(e\otimes 1)).
$$
Thus, we use the set $\{G_{ij}\}$ to glue the local flat modules $\{(H_i,\nabla_i)\}$ together and obtain an object $(H,\nabla)$ in $MIC(X)$. \\

{\itshape Step 3.} We need to show the so-constructed object $(H,\nabla)$ is independent of the choice of relative Frobenius liftings and the choice of open affine coverings. But, after our argument on a similar independence-type statement in Proposition \ref{gluing}, this step becomes entirely formal. Therefore, to summarize the whole steps, we construct a flat module $(H,\nabla)$ over $X$ up to canonical isomorphism from any twisted flat module $(\tilde H,\tilde \nabla)$ over $X'$, and this gives our second functor $\sF_n$.
\end{proof}
\begin{remark}\label{global existence of Frobenius lifting}
In above, a simplification can be made when a global lifting $F_{\tilde X}$ of the relative Frobenius over $\tilde X$ (a $W_{n+1}$-lifting of $X$) exists. Set $F_X=F_{\tilde X}\otimes \Z/p^{n}\Z$. In this case, one has a globally defined morphism $\frac{dF_{\tilde X}}{p}: F_X^*\Omega_{X'}\to \Omega_X$. Then $H=F_X^*\tilde H$ and $\nabla$ is define by Formula (\ref{formula for connection}) in which various local $\frac{d\tilde F_{i}}{p}$s are replaced simply by $\frac{dF_{\tilde X}}{p}$. The reason is as follows: in Step 0, we may take an open affine covering $\tilde \sU=\{\tilde U_i\}$ of $\tilde X$ (whose element is flat over $W_{n+1}$) such that $F_{\tilde X}: \tilde U_i\to \tilde U'_i$ for all $i$ and then take $\tilde F_i$ to be the restriction to $F_{\tilde X}$ to $\tilde U_i$. Then it follows that the gluing functions $G_{ij}$s are all identity. 
\end{remark}

Now we proceed to the proof of Theorem \ref{lifting of inverse cartier}.
\begin{proof}[Proof of Theorem \ref{lifting of inverse cartier}]
We define the functor $\sC_n^{-1}: \sH(X_n')\to MIC(X_n)$ to be the composite of the
functor $\mathcal{T}_n: \sH(X_n')\to \widetilde{MIC}(X'_n)$ in Proposition \ref{gluing} and the functor
$\sF_n: \widetilde{MIC}(X'_n)\to MIC(X_n)$ in Proposition \ref{second functor}. From their very constructions, the two functors $\sT_n$ and $\sF_n$ are compatible with the reduction modulo $p^{n-1}$. So is their composite. The equivalence of the functor $\sC_1^{-1}$ over $W_1=k$ with the inverse Cartier transform of Ogus-Vologodsky \cite{OV} has been verified in \cite{LSZ}.
\end{proof}

\section{Higgs correspondence in mixed characteristic}
The section aims to establish the Higgs correspondence between the category of strict $p^n$-torsion Fontaine modules with endomorphism $W_n(\F_{p^f})$ (Variant 2 \S2) and the category of periodic Higgs-de Rham flows over $X_n$, for $n\geq 2$, so that it lifts the one in positive characteristic and its limit, as $n$ goes to infinity, yields the Higgs correspondence in mixed characteristic. \\

We proceed first to the definition of a Higgs-de Rham flow over $X_n$. We use the notation from \S 4. Let $\pi_n: X'_n\to X_n$ be the natural morphism by base change, which induces the obvious equivalence of categories $\pi_n^*: \sH(X_n)\cong \sH(X'_n)$. We define the functor
$$
C_n^{-1}:=\sC_{n}^{-1}\circ \pi_n^*: \sH(X_n)\to MIC(X_n).
$$
\begin{definition}\label{HDRFn}
A Higgs-de Rham flow over $X_n$ is a diagram of the following form:

\begin{adjustbox}{scale=0.85}
\xymatrix{
                                                                                                   &   &     &  (H_0,\nabla_0)\ar[dr]^{Gr_{Fil_0}}    &   &  (H_1,\nabla_1)\ar[dr]^{Gr_{Fil_1}}    \\
   (\bar{H}_{-1},\bar{\nabla}_{-1})\ar[dr]^{Gr_{\bar{Fil}_{-1}}}&&(E_0,\theta_0) \ar[ur]^{C_n^{-1}}  \ar[d] & &(E_1,\theta_1) \ar[ur]^{C_n^{-1}}&&\ldots.       \\
                                                                                                   &(\bar E_{-1},\bar \theta_{-1})\ar[r]^{\stackrel{\bar\phi}{\cong}}& (\bar{E}_{0}, \bar{\theta}_0)}
\end{adjustbox}

Here in the upper line $(H_i,\nabla_i):=C_n^{-1}(E_i,\theta_i)$ and $Fil_i$ is a finite exhaustive decreasing filtration of $\sO_{X_n}$-submodules on $H_i$ satisfying Griffiths' transversality with respect to $\nabla_i$; in the middle line each term $(E_i,\theta_i)$ is a graded Higgs module over $X_n$ and $(\bar{H}_{-1},\bar{\nabla}_{-1},\bar{Fil}_{-1})$ is a de Rham module over $X_{n-1}$; in the bottom line $\bar \phi$ is an isomorphism of graded Higgs modules from $(\bar E_{-1},\bar \theta_{-1})=Gr_{\bar{Fil}_{-1}}(\bar{H}_{-1},\bar{\nabla}_{-1})$ to $(\bar{E}_{0}, \bar{\theta}_0)$, the mod $p^{n-1}$ reduction of $(E_0,\theta_0)$.
\end{definition}
We emphasize that, in the above definition, the filtrations $Fil_i, i\geq 0$ and the isomorphism $\bar \phi$ are part of the defining data of the flow. Also, an explanation of the various inverse Cartier transforms in the flow is in order:  $C_n^{-1}(E_0,\theta_0)$ is the abbreviation for $$C_n^{-1}(E_0,\theta_0,\bar{H}_{-1},\bar{\nabla}_{-1},\bar{Fil}_{-1},\bar \phi),$$ and $C_n^{-1}(E_i,\theta_i)$ for $i\geq 1$ is the abbreviation 
for $$C_n^{-1}(E_i,\theta_i, (C_n^{-1}(E_{i-1},\theta_{i-1}),Fil_{i-1})\otimes \Z/p^{n-1}\Z), Id).$$
Comparing with the notion of a periodic Higgs-de Rham flow over $X_1$ (Definition \ref{definition of periodic flow}), one finds that the extra data in the lower left corner in Definition \ref{HDRFn} puts an additional condition on $(E_0,\theta_0)$ when $n\geq 2$. This is caused by the construction of the functor $C_n^{-1}, n\geq 2$. It is interesting to characterize those graded Higgs modules satisfying the condition, but more importantly, to know whether this assumption made on $C_n^{-1}, n\geq 2$ could be relaxed after all. \\

Next, we turn to the central notion of a periodic Higgs-de Rham over $X_n$ which is defined in an inductive way. For a flat bundle $(H,\nabla)$ over $X_n, n\geq 1$, a finite exhaustive decreasing filtration on $H$ is said to be a \emph{Hodge filtration}, if it consists of locally free subsheaves of $H$, is locally split, and obeys Griffiths' transversality with respect to $\nabla$. 
\begin{definition}\label{def for periodic flow over Wn}
A periodic Higgs-de Rham flow over $X_n, n\geq 2$ of period $f\in \N$ consists of the following data:
\begin{enumerate}
   \item  A periodic Higgs-de Rham flow $(\bar{E},\bar{\theta}, \bar{Fil}_0,\cdots, \bar{Fil}_{f-1}, \bar \phi)\in HDF_{p-2,f}(X_{n}/W_n)$;
\item A graded Higgs bundle $(E,\theta)\in HIG_{p-2}(X_n)$ lifting $(\bar{E},\bar{\theta})$;
  \item A Hodge filtration $Fil_i$ of level $\leq p-2$ on  $C_n^{-1}(E_{i},\theta_{i})$ lifting the Hodge filtration $\bar{Fil}_{i}$ on $C_{n-1}^{-1}(\bar E_{i},\bar \theta_{i})$;
  \item An isomorphism of graded Higgs modules over $X_n$
      $$
      \phi: Gr_{Fil_{f-1}}(C_n^{-1}(E_{f-1},\theta_{f-1}))\cong (E,\theta)
      $$
      lifting $\bar{\phi}$.
  \end{enumerate}
\end{definition}
In the above definition, $(E_0,\theta_0):=(E,\theta)$ and $(C_{n-1}^{-1}(\bar{E}_{f-1},\bar{\theta}_{f-1}), \bar{Fil}_{f-1},\bar{\phi})$ together make an object in the category $\mathcal{H}(X_n)$ so that $C_n^{-1}(E_0,\theta_0)$ is naturally defined. Also, $C_{n}^{-1}(E_i,\theta_i)$ for $i\geq 1$ is naturally defined, which is simply
$$C_n^{-1}(E_i,\theta_i,C_{n-1}^{-1}(E_{i-1},\theta_{i-1}),\bar{Fil}_{i-1}).$$ By Corollary \ref{Hodge filtration property of filtrations in preperioidc flow}, the filtrations on a periodic Higgs-de Rham flow over $X_1$ are indeed Hodge filtrations.\\

Thus the data of a periodic Higgs-de Rham flow of period $f$ over $X_n,  n\geq 2$ is encoded in the following tuple
$$
(E,\theta,Fil_0,\cdots,Fil_{f-1},\phi,\bar E,\bar \theta,\bar{Fil}_0,\cdots,\bar{Fil}_{f-1},\bar \phi).
$$
A morphism between two tuples is given by a pair $(f,\bar f)$, where $\bar f$ is a morphism in the category $HDF_{p-2,f}(X_{n}/W_n)$ and $f$ is a morphism of graded Higgs modules over $X_n$ lifting $\bar f$ and satisfying natural properties. Let us explain this in the one-periodic case in detail. Let $(E_i,\theta_i,Fil_i,\phi_i,\bar E_i,\bar \theta_i,\bar{Fil}_i,\bar \phi_i), i=1,2$ be two one-periodic flows over $X_n$. Then a morphism
$$
(f,\bar f): (E_1,\theta_1,Fil_1,\phi_1,\bar E_1,\bar \theta_1,\bar{Fil}_1,\bar \phi_1)\to (E_2,\theta_2,Fil_2,\phi_2,\bar E_2,\bar \theta_2,\bar{Fil}_2,\bar \phi_2)
$$
means the following: first,
$$
\bar f: (\bar E_1,\bar \theta_1,\bar{Fil}_1,\bar \phi_1)\to (\bar E_2,\bar \theta_2,\bar{Fil}_2,\bar \phi_2)
$$
is a morphism in $HDF_{p-2,1}(X_{n}/W_n)$; second, $f: (E_1,\theta_1)\to (E_1,\theta_2)$ is a morphism of graded Higgs modules over $X_n$ lifting $$\bar f: (\bar E_1,\bar \theta_1)\to (\bar E_2,\bar \theta_2);$$ third, the morphism
$$
C_{n}^{-1}(f): C_{n}^{-1}(E_1,\theta_1)\to C_n^{-1}(E_2,\theta_2),
$$
which is naturally defined by the previous two properties, is compatible with the Hodge filtrations and the induced morphism on graded Higgs modules is compatible with $\phi$s, that is, the following diagram commutes:
\begin{equation*}
 \begin{CD}
 Gr_{Fil_1}C_n^{-1}(E_1,\theta_1)@>\phi_1>>(E_1,\theta_1)\\
 @VGrC_n^{-1}(f)VV@  VVfV\\
  Gr_{Fil_2}C_n^{-1}(E_2,\theta_2)@>\phi_2>>(E_2,\theta_2).
 \end{CD}
\end{equation*}
Thus, the category $HDF_{p-2,f}(X_{n+1}/W_{n+1})$ of periodic Higgs-de Rham flow of period $f$ over $X_n$ is indeed inductively defined. Also, based upon the previous two definitions, it is straightforward to define a preperiodic Higgs-de Rham flow over $X_n$ (the key is to assure the well-definedness of the inverse Cartier transform of each Higgs term). Since this will be not used in the sequel, we leave this task to the reader.
Recall that $MF^{\nabla}_{[0,p-2],f}(X_{n+1}/W_{n+1})$ is the category of strict $p^n$-torsion Fontaine modules with extra endomorphism $W_{n}(\F_{p^f})$. The following theorem lifts Theorem \ref{correspondence in the type (0,f) case} in characteristic $p$ (but notice the stronger restriction on the Hodge-Tate weight).
\begin{theorem}\label{periodic corresponds to FF module over a truncated witt ring}
Let $X_{n+1}$ be a smooth scheme over $W_{n+1}$. For $f\in \N$, there is an equivalence of categories between the category $MF^{\nabla}_{[0,p-2],f}(X_{n+1}/W_{n+1})$ and the category $HDF_{p-2,f}(X_{n+1}/W_{n+1})$.
\end{theorem}
Similar to the characteristic $p$ case, the theorem will be reduced to the one-periodic case. Let us introduce the category of one-periodic Higgs-de Rham flows over $X_n$ with endomorphism structure $W_n(\F_{p^{f}})$. Its object is a tuple $(E,\theta,Fil,\phi,\iota,\bar E,\bar \theta,\bar{Fil},\bar \phi)$, where $(E,\theta,Fil,\phi,\bar E,\bar \theta,\bar{Fil},\bar \phi)$ is an object in the category $HDF_n:=HDF_{p-2,f}(X_{n+1}/W_{n+1})$ and
$$
\iota: W_n(\F_{p^f})\hookrightarrow \End_{HDF_{n}}(E,\theta,Fil,\phi,\bar E,\bar \theta,\bar{Fil},\bar \phi)
$$
is an embedding of $W_n(\F_p)$-algebras. A morphism of this category is a morphism of one-periodic Higgs-de Rham flows compatible with endomorphism structures. Thus Theorem \ref{periodic corresponds to FF module over a truncated witt ring} follows from the next two propositions.
\begin{proposition}\label{one-periodic case}
There is an equivalence of categories between the category $MF^{\nabla}_{[0,p-2],1}(X_{n+1}/W_{n+1})$ and the category $HDF_{p-2,1}(X_{n+1}/W_{n+1})$.
\end{proposition}
This proposition is just the one-periodic case of Theorem \ref{periodic corresponds to FF module over a truncated witt ring}, which implies immediately that $MF^{\nabla}_{[0,p-2],f}(X_{n+1}/W_{n+1})$ is equivalent to the category of one-periodic Higgs-de Rham flows over $X_n$ with endomorphism structure $W_n(\F_{p^f})$. We postpone its proof and show first the next
\begin{proposition}\label{linear algebra prop}
There is an equivalence of categories between the category of one-periodic Higgs-de Rham flows over $X_n$ with endomorphism structure $W_n(\F_{p^f})$ and the category $HDF_{p-2,f}(X_{n+1}/W_{n+1})$.
\end{proposition}
\begin{proof}
Recall that we have chosen a primitive element $\xi_1\in \F_{p^f}$ in the proof of Lemma \ref{lemma from 0,f to f}. Let $\xi$ be the Teichm\"{u}ller lift of $\xi_1$ in $W(\F_{p^f})$ and let $\xi_n$ be the mod $p^{n}$ reduction of $\xi$. Thus $\xi_n$ is a generator of $W_n(\F_{p^f})$ as $W_{n}(\F_p)$-algebra. The Frobenius automorphism of $W_n=W_n(k)$ acts on $\xi_n$ by the power $p$ map. We prove by induction on $n$. The $n=1$ case is Proposition \ref{correspondence from HB_f and HB_(0,f)}, where we have constructed the functor $\sE$ and its quasi-inverse $\sD$. Rewrite them into $\sE_1$ and respectively $\sD_1$. As the induction hypothesis, we assume that we have constructed a sequence of functors $\{\sE_{i}\}_{1\leq i\leq n-1}$ and $\{\sD_i\}_{1\leq i\leq n-1}$ such that (i) $\sE_i$ and $\sD_i$ are quasi-inverse to each other and therefore give a equivalence of categories between the category of one-periodic Higgs-de Rham flows over $X_i$ with endomorphism structure $W_i(\F_{p^f})$ and the category $HDF_{p-2,f}(X_i)$; (ii) $\sE_i$ (resp. $\sD_i$) lifts $\sE_{i-1}$ (resp. $\sD_{i-1}$). Now we proceed to show the case for $n$. Let us start with an object of $HDF_{p-2,f}(X_n)$:
$$
(E,\theta,Fil_0,\cdots,Fil_{f-1},\phi,\bar E,\bar \theta,\bar{Fil}_0,\cdots,\bar{Fil}_{f-1},\bar \phi).
$$
Following the characteristic $p$ case, we put
$$
(G,\eta):=\bigoplus_{i=0}^{f-1}(E_i,\theta_i),
$$
where $(E_0,\theta_0)=(E,\theta)$ and $(E_i,\theta_i)=C^{-1}_n(E_{i-1},\theta_{i-1}), i\geq 1$ are the remaining Higgs terms defined inductively in the flow. The Hodge filtration $Fil$ on $C_n^{-1}(G,\eta)$ as well as the isomorphism $\tilde \phi$ of graded Higgs modules over $X_n$ are defined exactly in the same way as the characteristic $p$ case (see the paragraph before Lemma \ref{lemma from 0,f to f}). Clearly, $(G,\theta)$ (resp. $Fil$ and $\tilde \phi$) lifts $(\bar G,\bar \theta)$ (resp. $\bar{Fil}$ and $\bar{ \tilde{ \phi}}$) and therefore $(G,\eta,Fil,\tilde \phi,\bar G,\bar \theta,\bar{Fil},\bar{ \tilde{\phi}})$ is a one-periodic flow over $X_n$. Replacing $\xi_1$ in the proof of Lemma \ref{lemma from 0,f to f} with $\xi_n$, one equips this one-periodic flow with an endomorphism structure $W_n(\F_{p^f})$. This gives us a functor $\sE_n$. Conversely,
to a one-periodic Higgs-de Rham flow $(G,\eta,Fil,\phi,\iota, \bar G,\bar \eta,\bar{Fil},\bar \phi)$ over $X_n$ with the endomorphism structure $W_n(\F_{p^f})$, we associate an $f$-periodic Higgs-de Rham flow over $X_n$ as follows: first, the induction hypothesis gives us an $f$-periodic flow over $X_{n-1}$ (by abuse of notation we have omitted the part over $X_{n-1}$ in the following expression):
$$(\bar E,\bar \theta, \bar{\widetilde{Fil_0}},\cdots,\bar{\widetilde{Fil_{f-1}}},\bar{\tilde \phi}).$$
Second, let $(G,\eta)=\oplus_{i=0}^{f-1}(G_i,\eta_i)$ be the eigen-decomposition under the endomorphism $\iota(\xi_n)$. Then $C_n^{-1}(G_i,\eta_i)$ is naturally defined and one has the eigen-decomposition under $C_n^{-1}(\xi_n)$:
$$
(C_n^{-1}(G,\eta),Fil)=\bigoplus_{i=0}^{f-1}(C_n^{-1}(G_i,\eta_i),Fil_i).
$$
So we put $(E,\theta)=(G_0,\eta_0)$, and following strictly the constructions in the characteristic $p$ case (see Lemma \ref{lemma from f to 0,f}), one obtains
the filtrations $\widetilde {Fil_i}, 0\leq i\leq f-1$ and the isomorphism $\tilde \phi$ of graded Higgs modules over $X_n$, so that the extended tuple
$$
(E,\theta, \widetilde{Fil_0},\cdots,\widetilde{Fil_{f-1}},\tilde \phi, \bar E,\bar \theta, \bar{\widetilde{Fil_0}},\cdots,\bar{\widetilde{Fil_{f-1}}},\bar{\tilde \phi})
$$
is an object in $HDF_{p-2,f}(X_n)$. This give us the functor $\sD_n$ in the reverse direction. Given the proof of Proposition \ref{correspondence from HB_f and HB_(0,f)}, the proof for that $\sE_n$ and $\sD_n$ give an equivalence of categories becomes completely formal and is therefore omitted. Finally, the lifting properties of $\sE_n$ and $\sD_n$
are direct consequences of our choice of $\xi_n$ at the beginning.
\end{proof}
The remaining paragraphs are devoted to the proof of Proposition \ref{one-periodic case}. In the following, we choose and then fix an open affine covering $\mathcal{U}= \{U_i\}_{i\in I}$ of $X_{n+1}/W_{n+1}$ whose element is flat over $W_{n+1}$, and for each $i$, an absolute Frobenius lifting $F_i$ over $U_i$. First a lemma:
\begin{lemma}\label{relation}
Let $(H,\nabla,Fil,\Phi)$ be a strict $p^n$-torsion Fontaine module. Let $(\bar H,\bar \nabla,\bar{Fil})$ be the mod $p^{n-1}$ reduction of $(H,\nabla,Fil)$ and $(E,\theta)$ the associated graded Higgs bundle. Then the relative Frobenius $\Phi$ naturally induces an isomorphism of flat bundles over $X_n$:
$$
\tilde \Phi: C_{n}^{-1}(E,\theta)\cong (H,\nabla).
$$
\end{lemma}
\begin{proof}
The lemma follows from the strong $p$-divisibility and horizontality of $\Phi$ (see \S2) and the very construction of $C_n^{-1}$. For $n=1$, this is Proposition 1.4 \cite{LSZ}. For simplicity, let us ignore the issue of the obvious base change caused by the Frobenius automorphism of the base ring $W_n$ in the argument. By the first approach of the functor $\mathcal{T}_n$, it follows that
$$
\mathcal{T}_n(E,\theta,\bar H,\bar \nabla,\bar{Fil})= G_n(H,\nabla, Fil).
$$
Write it to be $(\tilde{H},\tilde{\nabla})$. Let $\mathcal{U}_n= \{U_{i,n}\}_{i\in I}$ be the induced open affine covering of $X_n$, and let $F_{i,n}$ be the induced absolute Frobenius lifting over $U_{i,n}$. Then the evaluation $\Phi_i:=\Phi_{(U_{i,n},F_{i,n})}$ of $\Phi$ is an isomorphism of local flat bundles:
$$
\Phi_i :   (F_{i,n}^*\tilde{H}|_{U_{i,n}}, F_{i,n}^*(\tilde{\nabla}|_{U_{i,n}})) \cong  (H, \nabla)|_{U_{i,n}},
$$
where the connection $F_i^*(\tilde{\nabla}|_{U_{i,n}})$ is defined via the formula (\ref{formula for connection}). The fact that different evaluations of $\Phi$ are related via the Taylor formula means
\begin{eqnarray}\label{taylor formula}
 \Phi_i &=&  \Phi_j\circ \varepsilon_{ij},
\end{eqnarray}
where the $ \varepsilon_{ij}$ is given by the formula (\ref{gluing function}) (replacing the indices $21$ in that formula with $ij$). Thus, the very construction of $C_n^{-1}$ means exactly that the local isomorphisms $\Phi_i$s glue together into a global one $\tilde \Phi$ from $C_n^{-1}(E,\theta)$ to $(H,\nabla)$.
\end{proof}
\begin{proof}[Proof of Proposition \ref{one-periodic case}]
We divide the whole proof into three steps, following the one in the characteristic $p$ case. The proof is by induction on $n$, where the $n=1$ case is Proposition \ref{correspondence in the type (0,1) case}. Rewrite the functors in the proof of Proposition \ref{correspondence in the type (0,1) case} by $\mathcal{GR}_1:=\mathcal{GR}$ and $\mathcal{IC}_1:=\mathcal{IC}$. As the induction hypothesis, we assume then the existence of functors $\mathcal{GR}_i: MF^{\nabla}_{[0,p-2],1}(X_{i+1}/W_{i+1})\to HDF_{p-2,1}(X_{i+1}/W_{i+1})$ and $\mathcal{IC}_i$ in the opposite direction for $1\leq i\leq n-1$ such that (i) $\mathcal{GR}_i$ and $\mathcal{IC}_i$ are quasi-inverse to each other, and (2) $\mathcal{GR}_i$ (resp. $\mathcal{IC}_i$) lifts $\mathcal{GR}_{i-1}$ (resp. $\mathcal{IC}_{i-1}$). In the following, we construct a lifting $\mathcal{GR}_n$ (resp. $\mathcal{IC}_n$) of $\mathcal{GR}_{n-1}$ (resp. $\mathcal{IC}_{n-1}$) such that $\mathcal{GR}_n$ and $\mathcal{IC}_n$ are quasi-inverse to each other.\\

{\itshape From Fontaine module to one-periodic Higgs-de Rham flow}: Let $(H,\nabla,Fil,\Phi)$ be a strict $p^{n}$-torsion Fontaine module over $X_{n+1}/W_{n+1}$. Its reduction mod $p^{n-1}$ $(\bar H,\bar \nabla,\bar{Fil},\bar \Phi)$ is a strict $p^{n-1}$-torsion Fontaine module. Let
$$
(\bar E,\bar \theta, \bar{Fil}_{exp},\bar \phi, \bar{\bar E},\bar{\bar \theta}, \bar{\bar{Fil}}_{exp},\bar{\bar \phi})
$$
be the corresponding one-periodic flow over $X_{n-1}$ to $(\bar H,\bar \nabla,\bar{Fil},\bar \Phi)$ via the functor $\mathcal{GR}_{n-1}$. Set $(E,\theta)=Gr_{Fil}(H,\nabla)$. By Lemma \ref{relation}, we define $Fil_{exp}$ on $C_n^{-1}(E,\theta)$ to be the pull-back of $Fil$ on $H$ via the isomorphism $\tilde \Phi$. Thus, we obtain also an isomorphism of graded Higgs modules over $X_n$:
$$
\phi:=Gr(\tilde{\Phi}): Gr_{Fil_{exp}}(C_n^{-1}(E,\theta))\cong Gr_{Fil}(H,\nabla)=(E,\theta).
$$
The lifting property of the inverse Cartier transform in Theorem \ref{lifting of inverse cartier} implies that the so-obtained tuple $(E,\theta,Fil_{exp},\phi)$ lifts $(\bar E,\bar \theta, \bar{Fil}_{exp},\bar \phi)$, so that
$$
\mathcal{GR}_n(H,\nabla,Fil,\Phi):=(E,\theta,Fil_{exp},\phi, \bar E,\bar \theta, \bar{Fil}_{exp},\bar \phi)
$$
is a one-periodic Higgs-de Rham flow over $X_n$. Clearly, the functor $\mathcal{GR}_n$ lifts $\mathcal{GR}_{n-1}$.\\

{\itshape From one-periodic Higgs-de Rham flow to Fontaine module}:  From a given object $(E,\theta,Fil,\phi,\bar E,\bar \theta,\bar{Fil},\bar \phi)\in HDF_{p-2,1}(X_n)$, one derives immediately the de Rham module
$$
(H,\nabla,Fil):=(C_n^{-1}(E,\theta),Fil).
$$
In order to complete it into a Fontaine module, it remains to put a relative Frobenius $\Phi$ on it. Let $(\bar H,\bar \nabla)$ be the mod $p^{n-1}$ reduction of $(H,\nabla)$ which is equal to $C_{n-1}^{-1}(\bar E,\bar \theta)$. Set $(E_1,\theta_1)=Gr_{Fil}(H,\nabla)$. Then we have two objects in the category $\sH(X_n)$: $(E,\theta,\bar H,\bar \nabla,\bar{Fil},\bar \phi)$ and $(E_1,\theta_1,\bar H,\bar \nabla,\bar{Fil},Id)$. Then $\phi: (E_1,\theta_1)\cong (E,\theta)$ and the identity map on $(\bar H,\bar \nabla,\bar{Fil})$ give rise to an isomorphism
$$
\phi: (E_1,\theta_1,\bar H,\bar \nabla,\bar{Fil},Id)\cong (E,\theta,\bar H,\bar \nabla,\bar{Fil},\bar \phi).
$$
and therefore an isomorphism $\tilde \phi:=\mathcal{T}_n(\phi)$ of twisted flat modules after Proposition \ref{gluing}:
$$
(\tilde{H},\tilde \nabla):=\mathcal{T}_n(E_1,\theta_1,\bar H,\bar \nabla,\bar{Fil},Id)\cong \mathcal{T}_n(E,\theta,\bar H,\bar \nabla,\bar{Fil},\bar \phi):=(\tilde{H}_{-1},\tilde \nabla_{-1}).
$$
Note the isomorphism $\mathcal{T}_1(\phi)$ for $n=1$ is nothing but the original isomorphism $\phi$ between graded Higgs bundles. Then for an open affine covering of $X_{n+1}$ and the set of Frobenius liftings as given in Lemma \ref{relation}, following the method in the characteristic $p$ case, we define simply an $\sO_{U_i}$-isomorphism by
$$
\Phi_i=F_{i,n}^*(\tilde \phi): F_{i,n}^*\tilde{H}\cong F_{i,n}^*(\tilde{H}_{-1}),
$$
where the latter module is just $H|_{U_{i,n}}=C_n^{-1}(E,\theta)|_{U_{i,n}}$. At this point, the triple $(H|_{U_{i,n}},Fil|_{U_{i,n}},\Phi_i)$ is a local $p$-torsion Fontaine module without connection (i.e. it is an object in the local category $MF(R_{i,n})$ with $R_{i,n}=\Gamma(U_{i,n},\sO_{U_{i,n}})$ as given in Page 31 \cite{Fa1}). By Theorem 2.1 \cite{Fa1} ii), one knows that $H$ is a \emph{locally free} $\sO_{X_n}$-module. Moreover, by the construction of the functor $\sF_n$, $\Phi_i$ is indeed an isomorphism of flat modules:
$$
\Phi_i: (F_{i,n}^{*}\tilde H|_{U_{i,n}},F_{i,n}^*\tilde \nabla|_{U_{i,n}})\cong (H,\nabla)|_{U_{i,n}}.
$$
This gives the horizontal property as required by an evaluation of the relative Frobenius (see Variant 1 \S2). It remains to explain $\Phi_i$s are related via the Taylor formula (\ref{taylor formula}). Let $e$ be a local section of $\tilde H$ over $U_{i,n}$. Then one computes that over $U_{ij,n}:=U_{i,n}\cap U_{j,n}$,
\begin{eqnarray*}
G_{ij}\circ  \Phi_i(e\otimes 1) &=&\sum_{J}\frac{\tilde{\nabla}_{-1}(\partial)^J(\tilde \phi(e))}{J!}\otimes z^{J}  \\
    &=& \sum_{J}\frac{\tilde{\phi}(\tilde{\nabla} (\partial)^J(e))}{J!}\otimes z^{J} \\
    &=&\Phi_j(\sum_{J}\frac{\tilde{\nabla} (\partial)^J(e)}{J!}\otimes z^{J})= \Phi_j\circ \varepsilon_{ij}(e\otimes 1).
\end{eqnarray*}
The second equality follows from the property that $\tilde \phi$ respects the twisted connections, and the last equality follows from the transition over $U_{ij,n}$ of a local section of $\tilde H$ over $U_{i,n}$ to a local section of $\tilde H$ over $U_{j,n}$. So we obtain a relative Frobenius $\Phi$ from local $\Phi_i$s and thus a strict $p^n$-torsion Fontaine module over $X_n$:
$$
\mathcal{IC}_n(E,\theta,Fil,\phi,\bar E,\bar \theta,\bar{Fil},\bar \phi):=(H,\nabla,Fil,\Phi).
$$
That the functor $\mathcal{IC}_n$ lifts $\mathcal{IC}_n$ follows from the lifting property of the inverse Cartier transform and the construction of the relative Frobenius.\\

{\itshape Equivalence of categories}: This is done by induction on $n$ and the constructions of the natural transformations in the proof of Proposition \ref{correspondence in the type (0,1) case}.
\end{proof}
At this point, we have completed our theory on the Higgs correspondence in positive and mixed characteristic (\S3-\S5). In \cite{Katz73} (see also \cite[Remark 5.5]{Katz71}), N. Katz established the following result.
\begin{theorem}[Proposition 4.1.1 \cite{Katz73}]\label{Katz result}
	Let $X_n/W_n$ be an irreducible smooth affine scheme over $W_n$, equipped with an absolute Frobenius lifting $F_{X_n}$. Then there is an equivalence of categories between the category of $W_n(\F_{p^f})$-representations of $\pi_1(X_n)$ and the category of pairs $(E,\phi)$ consisting of a locally free sheaf of finite rank $E$ over $X_n$ together with an isomorphism $\phi:F_{X_n}^{*f}(E)\to E$. 
\end{theorem}
The Katz's correspondence has been further developed in the work \cite{EK} of Emerton-Kisin, where the role of unit $F$-crystals is emphasized. A pair $(E,\phi)$ in Theorem \ref{Katz result} is called a \emph{Frobenius-periodic} vector bundle over $X_n$. This is the prototype of the notion of perodic Higgs-de Rham flow. Indeed, to a Frobenius-perodic vector bundle $(E,\phi)$, one associates the perodic Higgs-de Rham flow $(E,0,Fil_{tr},\cdots,Fil_{tr},\phi)$ over $X_n$ of level zero, and this association is an equivalence of categories. Take an arbitrary $W$-lifting $X=\Spec\ R$ of $X_n$ and $\Gamma$ (resp. $\Gamma^{ur}$) the Galois group of the maximal extension of $R$ \'{e}tale over $R[1/p]$ (resp. over $R$) (Ch. II \cite{Fa1}). Then, one may show that the corresponding $W_n(\F_{p^f})$-representation of $\Gamma$ to $(E,0,Fil_{tr},\cdots,Fil_{tr},\phi)$ over $X_n$ after Theorem \ref{periodic corresponds to FF module over a truncated witt ring} and Theorem 2.6 \cite{Fa1} factors through the natural quotient $\Gamma\to \Gamma^{ur}=\pi_1(X_n)$, and the resulting representation of $\pi_1(X_n)$ coincides with the representation corresponding to $(E,\phi)$ after Theorem \ref{Katz result}. Moreover, our theory extends to the case where only the existence of $W_{n+1}$-lifting of $X_n$ is assumed. Namely, one may show that, given a $W_{n+1}$-lifting $X_{n+1}$ of $X_n$, there is an equivalence of categories between the category of $f$-periodic Higgs-de Rham flow over $X_n$ of level zero and the category of $W_n(\F_{p^f})$-representations of $\pi_1(X_n)$. \\

To demonstrate the usage of the theory in a higher Hodge-Tate weight situation, we provide an immediate construction of a $p$-divisible group over a geometric base over $W$ whose Kodaira-Spencer map is an isomorphism.
\begin{example}\label{p-divisible group}
Let $A_1$ be any ordinary abelian variety defined over $k$ of dimension $g$. By Serre-Tate theory, it has the canonical lifting $A$ over $W(k)$ with the Frobenius lifting $F: A\to A$. Consider the following  Higgs bundle $(E,\theta)$ over $A/W$:
$$
E^{1,0}\oplus E^{0,1}=\Omega_{A}\oplus \mathcal{O}_A, \qquad  \theta^{1,0}=Id:\Omega_{A}\to \mathcal{O}_{A}\otimes \Omega_A.
$$
In the following we show that this Higgs bundle is one-periodic. We set $(E_n,\theta_n):= (E,\theta)\otimes \Z/p^n\Z$, and $F_n= F\otimes \Z/p^n\Z$. First we construct a one-periodic Higgs-de Rham flow on $A_1$: since $A_2$ has the global Frobenius lifting $F_2$, it follows that
$$
C_1^{-1}(E_1,\theta_1):=(H_1,\nabla_1) ,
$$
with
$$
 H_1:= F_1^*E_1 \qquad \textrm{and      } \qquad  \nabla_1=\nabla_{can}+ \frac{dF_2}{p}(F_1^*\theta_1).
$$
A Hodge filtration of level one on $(H_1,\nabla_1)$ is defined by  $Fil_1^1=F_1^*\Omega_{A_1}=\Omega_{A_1}$. Set
$$
  (E'_1,\theta'_1):=Gr_{Fil_1}(H_1,\nabla_1).
$$
Then
$$
  E'_1=\Omega_{A_1}\oplus \mathcal{O}_{A_1}, \qquad \theta'^{1,0}_1= \frac{dF_2}{p}(F_1^*\theta_1).
$$
Because of the ordinariness of $A_1$, the Hasse-Witt map $$\frac{dF_2}{p}: H^0(A_1,\Omega_{A_1})\to  H^0(A_1,\Omega_{A_1})$$ is bijective. So $\theta'^{1,0}_1$ has to be an isomorphism. Then we proceed to show $(E'_1,\theta'_1)$ is isomorphic to $(E_1,\theta_1)$. Indeed, there is one natural choice $\psi_1:  E'_1\to E_1$ of isomorphisms described as follows: its $(0,1)$-component mapping $\mathcal{O}_{A_1}$ to itself is the identity, and its $(1,0)$-component mapping $\Omega_{A_1}$ to itself is the unique isomorphism commuting with the Higgs fields. Therefore, we have obtained a one-periodic flow over $A_1$ as claimed:
$$
  \xymatrix{ &(H_1,\nabla_1)\ar[dr]^{Gr_{Fil_1}}
     \\ (E_1,\theta_1) \ar[ur]^{C_{1}^{-1}}
 &&  (E'_1,\theta'_1) \ar@/^2pc/[ll]^{\stackrel{\psi_1}{\cong}  }}
$$
Next we proceed to the $W_2$-level. Using the fact that $A_3$ has the Frobenius lifting $F_3$, one computes that $\tilde{H}_{-1,2}=\Omega_{A_2}\oplus \mathcal{O}_{A_2}$, and $\tilde{\nabla}_{-1,2}=p\nabla_{can}+ \theta_2$. Using Remark \ref{global existence of Frobenius lifting}, it follows that
$$
C_2^{-1}(E_2,\theta_2)= (H_2,\nabla_2)
$$
with $H_2=F_2^*E_2=\Omega_{A_2}\oplus \mathcal{O}_{A_2}$ and $\nabla_2$ is the connection defined by the formula
$$
\nabla_2(f\otimes e)=df\otimes e+f\cdot(\frac{dF_3}{p}\otimes 1)(1\otimes \tilde\nabla_{-1,2}(e)),
$$
where $f$ (resp. $e$) is a local section of $\sO_{A_2}$ (resp. $\tilde{H}_{-1,2}$) (see Claim \ref{def of connection from p-connection}). Now we take the filtration $Fil_2^1=\Omega_{A_2}$, which lifts $Fil_1^1$. Then the associated graded Higgs bundle is
$$
E'_2= \Omega_{A_2}\oplus \mathcal{O}_{A_2}, \ \theta'^{1,0}_2=\frac{dF_3}{p}(F_2^*\theta_2)
$$
which lifts $(E'_1,\theta'_1)$. Again there is an obvious isomorphism
$$
\psi_2: (E'_2,\theta'_2)\to (E_2,\theta_2)
$$
which lifts $\psi_1$ and whose $(0,1)$-component is the identity map. So we obtain a one-periodic flow over $A_2$:
$$
\xymatrix{ &(H_2,\nabla_2)\ar[dr]^{Gr_{Fil_2}}
     \\ (E_2,\theta_2) \ar[ur]^{C_{2}^{-1}}
 &&  (E'_2,\theta'_2)\ar@/^2pc/[ll]^{\stackrel{\psi_2}{\cong}  }}
$$
Then one continues and constructs inductively a one-periodic flow over $A_n, n\geq 1$:
$$
\xymatrix{ &(H_n,\nabla_n)\ar[dr]^{Gr_{Fil_n}}
     \\ (E_n,\theta_n) \ar[ur]^{C_{n}^{-1}}
 &&  (E'_n,\theta'_n)\ar@/^2pc/[ll]^{\stackrel{\psi_n}{\cong}  }}
$$
Passing to the limit, one obtains therefore a one-periodic Higgs-de Rham flow over $A/W$, and hence a rank $g+1$ crystalline $\Z_p$-representation of Hodge-Tate weight one of the generic fiber $A^0$ of $A/W$ which by \cite[Theorem 7.1]{Fa1} corresponds to a $p$-divisible group over $A/W$.
\end{example}

\section{Strongly semistable Higgs modules}
 Let $X/k$ be a smooth projective variety over $k$, equipped with an ample divisor $Z$. The semistability in this section means the $\mu_Z$-slope semistability. Recall that a vector bundle over $X$ is said to be \emph{strongly semistable} if the bundle as well as its pullback under any power of Frobenius are semistable. The relation between strongly semistable bundles with trivial Chern classes and representations of the algebraic fundamental group was firstly revealed by Lange-Stuhler in the curve case (see \cite[\S1]{LS}). It asserts that a semistable vector bundle $E$ of degree zero over a curve can be trivialized after a finite morphism if and only if it is strongly semistable (\cite[Satz 1.9]{LS}). Note that $E$ being strongly semistable of degree zero is equivalent to the condition that there is a pair $(e,f)$ of integers with nonnegative $e$ and positive $f$ such that
$$
F_{X}^{*e+f}(E)\cong F_{X}^{*e}(E),
$$
where $F_{X}:X\to X$ denotes the absolute Frobenius morphism as usual. It corresponds to a representation of $\pi_1(X)$ into $\GL(k)$ if and only if $e$ in the above isomorphism can be taken to be zero (\cite[Proposition 1.2, Satz 1.4]{LS}). The result of Lange-Stuhler has been generalized to a singular curve by Deninger-Werner (see \cite[Theorem 18]{DW}) and this generalization played a key role in their partial $p$-adic analogue of the Narasimhan-Seshadri theory. Besides the intimate relation with the representation of $\pi_1$, the notion of strongly semistability is also useful in other situation, for example, in Langer's proof of boundedness of semistable sheaves and Bogomolov's inequality in positive characteristic \cite{Langer0}. Therefore, it is a natural question to generalize this notion to Higgs modules. Interestingly enough, it turns out that our generalization (especially Theorem \ref{semistable bundles of small ranks are strongly semistable} below) has played a key role in the very recent result, due to A. Langer \cite{Langer2}, on the Bogomolov-Gieseker inequality for semistable Higgs bundles and Miyaoka-Yau inequality for surfaces in positive characteristic. \\

The key of the generalization is to replace the Frobenius pullback with the inverse Cartier transform of Ogus-Vologodsky \cite{OV}, as seen in the following
\begin{definition}
Let $X$ be a smooth projective variety over $k$ together with a fixed $W_2$-lifting of $X$. A Higgs module $(E,\theta)$ is called strongly semistable if
it appears in the initial term of a semistable Higgs-de Rham flow, that is, all
Higgs terms $(E_i,\theta_i)$s in the flow are semistable and defined over a common finite subfield of $k$.\footnote{This definition corrects the error in \cite[Definition 2.1]{LSZ2012}.}
\end{definition}
A torsion Higgs module is by definition automatically semistable, which is however uninteresting to study in the current setting. Therefore, a semistable Higgs module is taciturnly assumed to be torsion free in this paper. Clearly, a strongly semistable vector bundle $E$ is strongly Higgs semistable: one takes simply the following Higgs-de Rham flow

\begin{adjustbox}{scale=1.1}
$$
\xymatrix{
                &  (F_{X}^*E,\nabla_{can})\ar[dr]^{Gr_{Fil_{tr}}}       &&  (F_{X}^{*2}E,\nabla_{can})\ar[dr]^{Gr_{Fil_{tr}}}    \\
 (E,0) \ar[ur]^{F_X^*}  & &     (F_{X}^*E,0) \ar[ur]^{F_X^*}&&\ldots       }
$$
\end{adjustbox}

where $\nabla_{can}$ is the canonical connection in the theorem of Cartier descent and $Fil_{tr}$ stands for the trivial filtration as before. \\

The first result on the Higgs semistability of the graded Higgs module associated to a strict $p$-torsion Fontaine module is due to Ogus-Vologodsky \cite[Proposition 4.19]{OV}.
\begin{proposition}[Ogus-Vologodsky]
Let $X/k$ be a smooth projective curve of genus $g$. Let $(H,\nabla,Fil,\Phi)$ be a strict $p$-torsion Fontaine module (with respect to some $W_2$-lifting of $X$). Assume that
$$
n(\rk H-1)\max\{2g-2,1\}<p-1.
$$
Then $Gr_{Fil}(H,\nabla)$ is a semistable Higgs bundle.
\end{proposition}

Their result can be generalized as follows (see \cite[Proposition 0.2]{SXZ} for a generalization in the geometric case as given in Example \ref{geometric situation} and see also \cite[Proposition 3.7]{SZPeriodic}):
\begin{proposition}\label{geometric case}
Let $X/k$ be a smooth projective variety. Let $(H,\nabla,Fil,\Phi)$ be a strict $p$-torsion Fontaine module (with respect to some $W_2$-lifting of $X$). Then the graded Higgs bundle $(E,\theta):=Gr_{Fil}(H,\nabla)$ is Higgs semistable. Moreover, any Higgs subsheaf $(G,\theta)\subset (E,\theta)$ of slope zero is strongly semistable.
\end{proposition}
\begin{proof}
Let us first recall that we have proven that there is a natural isomorphism
$$
\tilde \Phi: C_1^{-1}(E,\theta)\cong (H,\nabla).
$$
See \cite[Proposition 1.4]{LSZ}. Therefore, one obtains a natural isomorphism $Gr_{Fil}\circ C_1^{-1}(E,\theta)\cong (E,\theta)$. It implies that $p\mu(E)=\mu(E)$, and thus $\mu(E)=0$. For the first statement, we prove by contradiction. Let $(F,\theta)$ be the maximal destabilizing Higgs subsheaf of $(E,\theta)$ with positive slope $\mu(F)$. Then $Gr_{Fil}\circ C_1^{-1}(F,\theta)$ is naturally a Higgs subsheaf of slope $p\mu(F)>\mu(F)$. A contradiction. For the second statement, let us first notice that a Higgs subsheaf $(G,\theta)$ of slope zero is automatically semistable, as firstly observed by C. Seshadri. Now that $Gr_{Fil}\circ C_1^{-1}(G,\theta)$ is naturally a Higgs subsheaf of $(E,\theta)$ which is again of slope zero and hence semistable, $(G,\theta)$ is strongly semistable.
\end{proof}
It is surprising to have the following
\begin{proposition}\label{rank two semistable implies strongly
semistable} Notation as above. Any rank two nilpotent semistable Higgs module is strongly semistable.
\end{proposition}
\begin{proof}
Let $(E,\theta)$ be a rank two nilpotent semistable Higgs module over $X$.
For the reason of rank, $\theta$ is nilpotent of exponent $\leq 1$. Denote $(H,\nabla)$ for $C_1^{-1}(E,\theta)$, and $HN$ the Harder-Narasimhan filtration on $H$. We need to show that the graded Higgs module
$Gr_{HN}(H,\nabla)$ is again semistable. If $H$ is semistable, there is
nothing to prove: in this case, the $HN$ is trivial and hence the
induced Higgs field is zero, and $Gr_{HN}(H,\nabla)=(H,0)$ is
Higgs semistable. Denote otherwise $L_1\subset H$ for the invertible subsheaf of maximal slope and $L_2=H/L_1$ the quotient sheaf. Then $L_1$ cannot be $\nabla$-invariant. Indeed, $(H,\nabla)$ is $\nabla$-semistable as a general property: say $M\subset H$ any $\nabla$-invariant subsheaf. Then $(F,\theta|_{F}):=C_1(M,\nabla|_{M})\subset C_1(H,\nabla)=(E,\theta)$ is a Higgs subsheaf of slope $\mu(F)=\mu(M)/p$, where $C_1$ is the Cartier transform of Ogus-Vologodsky (see also \cite{LSZ}). As $(E,\theta)$ is Higgs semistable, it follows that $$\mu(M)=p\mu(F)\leq p\mu(E)=\mu(H).$$  So the natural map
$$
\theta'=Gr_{HN}\nabla: L_1\to L_2\otimes \Omega_{X_1}
$$
is nonzero. A nontrivial proper Higgs subsheaf $L\subset Gr_{HN}(H,\nabla)$ is simply an invertible subsheaf with $\theta'(L)=0$.  So $L\subset L_2$ and $$\mu(L)\leq \mu(L_2)<\mu(H)=\mu(Gr_{HN}H).$$ In this case, $Gr_{HN}(H,\nabla)$ is actually Higgs stable.
\end{proof}
Motivated by Proposition \ref{rank two semistable implies strongly semistable}, we proposed a conjecture on strongly semistability of a nilpotent semistable Higgs module of higher rank in the first version of the paper (see \cite[Conjecture 2.8]{LSZ2012}). The conjecture was proven by A. Langer in \cite{Langer1} in the case of small rank, which is quite crucial to his algebraic proof of Bogomolov-Gieseker inequality and Miyaoka-Yau inequality.  In Appendix A, we shall provide an independent proof of the conjecture in the case of small rank.
\begin{theorem}[Theorem \ref{strsst}, {\cite[Theorem 5.1]{Langer1}}]\label{semistable bundles of small ranks are strongly semistable}
Notation as above. Any nilpotent semistable Higgs module of rank $\leq p$ over $X$ is strongly semistable.
\end{theorem}
After establishing the notion of strongly semistability and exhibiting ample examples, we shall concentrate on those strongly semistable Higgs modules with trivial Chern classes, as guided by the theorem of Lange-Stuhler in the beginning. It turns out they are quite close to be periodic.
\begin{theorem}\label{quasiperiodic equivalent to strongly semistable} 
Let $X/k$ be a smooth projective variety together with a fixed $W_2$-lifting of $X$. A preperiodic Higgs module is strongly semistable with trivial   Chern classes. Conversely, a strongly semistable Higgs module with trivial Chern classes is preperiodic.
\end{theorem}
\begin{proof}
For a Higgs module $(E,\theta)\in HIG_{p-1}(X)$, let $(H,\nabla)=C_1^{-1}(E,\theta)$ be the corresponding flat module. It follows from the proof of {\cite[Theorem 4.17]{OV}} that
$$
c_l(H)=p^lc_l(E), l\geq 0.
$$
Since for any Griffiths transverse filtration $Fil$ on $(H,\nabla)$ the associated graded Higgs module $(E',\theta')=Gr_{Fil}\circ C_1^{-1}(E,\theta)$ has the same Chern classes as $H$, it follows that
$$
c_l(E')=p^lc_l(E), l\geq 0.
$$
Therefore, in a Higgs-de Rham flow, one has for $i\geq 0$
$$
c_l(E_{i+1})=p^lc_l(E_{i}), l\geq 0.
$$
This forces the Chern classes of a preperiodic Higgs module to be trivial. Also, a slope $\lambda$ Higgs subsheaf in $(E_i,\theta_i)$ gives rise to a
slope $p\lambda$ Higgs subsheaf in $(E_{i+1},\theta_{i+1})$. This
implies that, in a preperiodic Higgs-de Rham flow, each Higgs term $(E_i,\theta_i)$ contains no Higgs subsheaf of positive degree. So each $(E_i,\theta_i)$ is semistable. This shows the first statement. Conversely, let $(E,\theta)$ be a strongly semistable Higgs module with trivial Chern classes, and let $(E_i,\theta_i)_{i\geq 0}$ be the Higgs terms appearing in a semistable Higgs-de Rham flow with the initial term $(E,\theta)$. As discussed above, each $(E_i,\theta_i)$ has trivial Chern classes (and the same rank as $E$). By \cite[Lemma 5]{Langer2} and \cite[Theorem 4.4]{Langer1}, the moduli space of semistable Higgs modules with trivial Chern classes over $X/k$ is bounded and defined over $k$. Let $k'\subset k$ be the common finite subfield for the infinite sequence $\{(E_i,\theta_i)\}_{i\geq 0}$. Since any scheme of finite type over $k$ has only finitely many $k'$-rational points, there must exist a pair of integers $(e,f)\in \Z_{\geq 0}\times \N$ such that there is an isomorphism (over $k$) of Higgs modules $(E_{e+f},\theta_{e+f})\cong (E_e,\theta_e)$. Certainly, one can make the above isomorphism into an isomorphism of graded Higgs modules by adjusting one of their gradings. Therefore, $(E,\theta)$ is preperiodic.
\end{proof}

Using the notion of strongly semistable Higgs module as bridge, we can produce crystalline $k$-representations (up to isomorphism) from semistable nilpotent Higgs bundles (of small rank) with trivial Chern classes.
\begin{theorem}\label{rank two semistable bundle corresponds to rep}
Let $X/W$ be a smooth projective scheme. Then for any rank $r\leq p-1$ semistable nilpotent Higgs bundle $(E,\theta)$ over $X_k$ with trivial Chern classes, one associates a unique $r$-dimensional crystalline $k$-representation of $\pi_1(X_{K})$ up to isomorphism. 
\end{theorem}
The proof relies on the next result which follows directly from Theorem \ref{semistable bundles of small ranks are strongly semistable} and Theorem \ref{quasiperiodic 
equivalent to strongly semistable}. 
\begin{corollary}\label{existence of flow}
Notation as Theorem \ref{rank two semistable bundle corresponds to rep}. Then any rank $r\leq p$ semistable nilpotent Higgs bundle over $X_k$ with trivial Chern classes is preperiodic.
\end{corollary} 
It is a nontrivial problem to decide when a small rank semistable graded Higgs bundle with trivial Chern classes is periodic (and its period when it is indeed the case), albeit always preperiodic by the corollary. We shall discuss this problem as well as the basic properties of representations constructed in Theorem \ref{rank two semistable bundle corresponds to rep} on a later occasion. \\ 

More preparations are needed before we can prove Theorem \ref{rank two semistable bundle corresponds to rep}. A (pre)peridoic Higgs bundle may lead more than one (pre)periodic Higgs-de Rham flows. However, in the setting of Corollary \ref{existence of flow}, there is a natural choice of filtrations in a preperiodic Higgs-de Rham flow. 
\begin{proposition}[Theorem \ref{goodfil}, Remark \ref{canonical};{  \cite[Theorem 5.5]{Langer1}}]\label{Simpson filtration}
Let $(H,\nabla)$ be a $\nabla$-semistable flat bundle over $X_k$. Then there exists a uniquely defined reduced gr-semistable filtration on $(H,\nabla)$ which is preserved by any automorphism of $(H,\nabla)$.
\end{proposition}
The reader is referred to the appendix for the definition of a gr-semistable (resp. reduced) filtration. We shall call the filtration in the above result the \emph{Simpson filtration} and denote it by $Fil_S$. Note that the Simpson filtration in the rank two case is nothing but the Harder-Narasimhan filtration on $H$, and it may well differ from the Harder-Narasimhan filtration for higher ranks. A periodic Higgs-de Rham flow whose each de Rham term is equipped with the Simpson filtration is unique up to lengthening, as proven below. The unicity is shared more generally by a periodic Higgs-de Rham flow with the following  
\begin{assumption}\label{assumption on filtration}
Let $(E,\theta,Fil_0,\cdots,Fil_{f-1},\varphi)$ be a periodic Higgs-de Rham flow over $X_k$. Assume that, for each $0\leq  i\leq f-1$, the filtration $Fil_i$ on $H_i$ is
preserved by any automorphism of $(H_i,\nabla_i)$.
\end{assumption}
Recall in the definition of the lengthening, one has the following isomorphisms of graded Higgs modules induced by $\varphi:(E_f,\theta_f)\cong (E_0,\theta_0)$:
$$
(GrC_1^{-1})^{nf}(\varphi):(E_{(n+1)f},\theta_{(n+1)f})\cong
(E_{nf},\theta_{nf}), \quad n\in \N.
$$ 
Set, for $-1\leq i< j$, 
$$ 
\varphi_{j,i}=(GrC_1^{-1})^{(i+1)f}(\varphi)\circ\cdots\circ(GrC_1^{-1})^{jf}(\varphi):(E_{(j+1)f},\theta_{(j+1)f})\cong (E_{(i+1)f},\theta_{(i+1)f}).
$$
Note $\varphi_{j,-1}$ is just the isomorphism $\varphi_j$ in the $j$-th lengthening of the starting periodic flow.  
\begin{lemma}\label{finiteness implies periodic}
Let $\phi: (E_f,\theta_f)\cong(E_0,\theta_0)$ be another isomorphism of graded Higgs modules. Then there exists a pair $(i,j)$ with
$0\leq i<j$ such that $\phi_{j,i}\circ \varphi_{j,i}^{-1}=Id$.
\end{lemma}
\begin{proof}
If we denote $\tau_s=\phi_{s}\circ \varphi_{s}^{-1}$, then $\tau_s$
is an automorphism of $(E_0,\theta_0)$. Moreover, each element in
the set $\{\tau_s\}_{s\in \N}$ is defined over the same finite field
in $k$. As this is a finite set, there are $j>i\geq 0$ such that
$\tau_j=\tau_i$. So the lemma follows.
\end{proof}
\begin{proposition}\label{phi plays no role}
Assume Assumption \ref{assumption on filtration}. Let $(i,j)$ be a pair given
by Lemma \ref{finiteness implies periodic} for two given
isomorphisms $\varphi, \phi : (E_f,\theta_f)\cong(E_0,\theta_0)$.
Then there is an isomorphism in $HDF_{n,(j-i)f}(X_2/W_2)$:
$$(E,\theta,Fil_0,\cdots,Fil_{(j-i)f-1},\varphi_{j-i-1})\cong (E,\theta,Fil_0,\cdots,Fil_{(j-i)f-1},\phi_{j-i-1}),$$
where both sides of the isomorphism are obtained by $j-i-1$-th lengthening.
\end{proposition}
\begin{proof}
Put $\beta=\phi_{i}\circ \varphi_{i}^{-1}: (E_0,\theta_0)\cong
(E_0,\theta_0)$. We shall check that it induces an isomorphism in
$HDF_{n,(j-i)f}(X_2/W_2)$. By Assumption \ref{assumption on
filtration}, $C_1^{-1}(GrC_1^{-1})^m(\beta)$ for $m\geq 0$ always
respects the filtrations. We need only to check that $\beta$ is
compatible with $\phi_{j-i-1}$ as well as $\varphi_{j-i-1}$. So it
suffices to show that the following diagram is commutative:
\begin{diagram}
E_{(j-i)f}&\rTo{\varphi_{j-i-1}}&E_0\\
\dTo{\varphi_{j,j-i-1}^{-1}}& & \dTo{\varphi_{i}^{-1}}\\
E_{(j+1)f} & &  E_{(i+1)f}\\
\dTo{\phi_{j,j-i-1}} & &\dTo{\phi_{i}} \\
E_{(j-i)f}& \rTo{\phi_{j-i-1}}&E_0.\\
\end{diagram}
Clearly, it is equivalent to show the commutativity of the following diagram:
\begin{diagram}
E_{(j-i)f}&\lTo{\varphi_{j-i-1}^{-1}}&E_0\\
\dTo{\varphi_{j,j-i-1}^{-1}}& & \dTo{\varphi_{i}^{-1}}\\
E_{(j+1)f} & &  E_{(i+1)f}\\
\dTo{\phi_{j,j-i-1}} & &\dTo{\phi_{i}} \\
E_{(j-i)f}& \rTo{\phi_{j-i-1}}&E_0.\\
\end{diagram}
In the above diagram, the anti-clockwise direction is
$$\phi_{j-i-1}\circ\phi_{j,j-i-1}\circ\varphi_{j,j-i-1}^{-1}\circ\varphi_{j-i-1}^{-1}
=\phi_j\circ\varphi_j^{-1}=\phi_i\circ(\phi_{j,i}\circ\varphi_{j,i}^{-1})\circ\varphi_i.$$
By the requirement for $(i,j)$, we have
$\phi_{j,i}\circ\varphi_{j,i}^{-1}=Id$, so the anti-clockwise
direction is $\phi_i\circ\varphi_i$, which is exactly the clockwise
direction. So $\beta$ is shown to be compatible with $\phi_{j-i-1}$
and $\varphi_{j-i-1}$.
\end{proof}
Now we proceed to the proof of Theorem \ref{rank two semistable bundle corresponds to rep}.
\begin{proof}
It is to assemble the previous results. First, by Theorem \ref{existence of flow}, there exists a preperiodic flow with the initial term $(E,\theta)$. But there are several choices. In order to make a unique choice, we shall apply Proposition \ref{Simpson filtration} at each step, that is, we use the Simpson filtration $Fil_S$ on each flat bundle and then we obtain the uniqueness on the filtrations in the flow. Now let $e\in \N_0$ be the minimal number such that $(Gr_{Fil_S}\circ C_1^{-1})^e(E,\theta)$ is periodic and let $f\in \N$ be its period. Thus from $(E,\theta)$ we have obtained an
periodic Higgs-de Rham flow
$$
((Gr_{Fil_S}\circ C_1^{-1})^e(E,\theta),Fil_0=Fil_S,\cdots,Fil_{f-1}=Fil_S,\phi),
$$
unique up to the choice of $\phi$. Fix one choice of $\phi$ and let $\rho$ be the corresponding representation after Corollary \ref{correspondence from crystalline represenations and HB_(0,f)}. As $Fil_S$s satisfy Assumption \ref{assumption on filtration}, it follows from Proposition \ref{phi plays no role} and Corollary \ref{operations on Higgs-de Rham sequences} (ii) that the isomorphism class of $\rho\otimes k$ is independent of the choice of $\phi$. Finally, the rank condition $\leq p-1$ allows us to apply Theorem \ref{Faltings thm}, to conclude the theorem.
\end{proof}

\section{Rigidity theorem for Fontaine modules}
Let $X/W$ be a smooth and projective scheme, equipped with a $W$-ample divisor $Z$. To a ($p$-torsion) Fontaine module $(H,\nabla,Fil,\Phi)$, one associates naturally the graded Higgs bundle $(E,\theta):=Gr_{Fil}(H,\nabla)$ by taking the grading of $(H,\nabla)$ with respect to the filtration $Fil$. In this section, we show that the gr-functor is faithful over those mod $p$-stable objects (with respect to the $\mu_{Z_1}$-slope). \\ 

For a Griffiths transverse filtration $Fil$ of level $w$, we extend $Fil^{i}=Fil^0$ for $-i\in \N$ and $Fil^{w+j}=0$ for $j\in \N$.   
\begin{lemma}\label{unique fil}
Let $Y$ be a smooth projective variety over an algebraically closed field $k$ and let $(V,\nabla)$ be a flat bundle over $Y$. If there exists a Griffiths transverse filtration $Fil$ on $(V,\nabla)$ such that the associated graded Higgs module $(E,\theta)$ is stable, then it is unique up to a shift of index.
\end{lemma}
\begin{proof}
Suppose on the contrary that there is another gr-semistable filtration $\bar{Fil}$ on $V$ which differs from $Fil$ after arbitrary index shifting. Set $(\bar E,\bar \theta):=Gr_{\bar{Fil}}(V,\nabla)$.\\

Case 1: Suppose that there exists an integer $N$, such that for every $i$, $Fil^{i}\subset \bar{Fil}^{i+N}$. Also, we can assume $N$ is so chosen that for some $i_0$, $Fil^{i_0}\subsetneq \bar{Fil}^{i_0+N+1}$. Then the inclusion induces a natural  morphism of Higgs bundles $$f:(E,\theta)\to (\bar{E},\bar{\theta}).$$ Clearly, the morphism $f$ cannot be injective at each closed point. Otherwise, it implies that $Fil^{i}=\bar{Fil}^{i+N}$ for all $i$ which contradicts the assumption that the two previous filtrations are nonequal. So $f$ is not injective. Neither is $f$ zero, since, otherwise, it would imply that $Fil^i\subset \bar{Fil}^{i+N+1}$ for all $i$ which contradicts the assumption on $N$. Therefore, on the one hand, as ${\rm Im}(f)$ is a quotient of $(E,\theta)$, $\mu({\rm Im}(f))>\mu(E)=\mu(V)$; on the other hand, ${\rm Im}(f)$ is also a subobject of $(\bar{E},\bar{\theta})$, $\mu({\rm Im}(f))\leq \mu(\bar E)=\mu(V)$. A contradiction.\\

Case  2: Otherwise, let $a$ be the largest integer such that $Fil^a$ is not contained in $\bar{Fil}^a$, and $b$ the largest integer such that $Fil^{a-i}$ is contained in $\bar{Fil}^{b-i}$ for  all $i\geq 0$. Certainly $a>b$.  Then we define a morphism of Higgs bundles $f:(E,\theta)\to (\bar{E},\bar {\theta})$ as follows: for $i>0$, $f|_{E^{a+i}}=0$, and for $j\geq 0$, $f|_{E^{a-j}}$ is given by
$$
 E^{a-j}=Fil^{a-j}/Fil^{a-j+1}\to \bar{Fil}^{b-j}/\bar{Fil}^{b-j+1}.
$$
This is well defined as $Fil^{a+1}\subset \bar{Fil}^{a+1}\subset \bar{Fil}^{b+1}$ and $Fil^{a-j}\subset \bar{Fil}^{b-j}$ for $j\geq 0$. Clearly $f$ cannot be injective. Also, $f$ cannot be zero. Otherwise, one would get the relation $Fil^{a-j}\subset \bar{Fil}^{b+1-j}$ for all $j\geq 0$, which contradicts the maximality of $b$. The remaining argument goes exactly as Case 1.
\end{proof}
The above statement, true for any characteristic, has the following nice consequence in the current setting.
\begin{proposition}\label{unique filtration}
Let $(E,\theta)$ be a Higgs bundle over $X_1$. Suppose $(E,\theta)$ is stable and one-periodic. Then there is a unique one-periodic Higgs-de Rham flow with the initial term $(E,\theta)$ up to isomorphism.
\end{proposition}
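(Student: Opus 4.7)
The plan is to leverage Lemma \ref{unique fil} to pin down the Hodge filtration and then absorb the residual scaling freedom in the glueing isomorphism $\phi$ via a scalar automorphism of $(E,\theta)$. Given two one-periodic Higgs-de Rham flows $(E,\theta,Fil_i,\phi_i)$ for $i=1,2$ with the same initial term $(E,\theta)$, each $Fil_i$ is a Hodge filtration on $C_1^{-1}(E,\theta)$ whose associated graded Higgs module is identified by $\phi_i$ with the stable $(E,\theta)$; in particular both gradings are stable.

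By Lemma \ref{unique fil} applied to the flat bundle $C_1^{-1}(E,\theta)$, the two filtrations agree up to a shift of index. This residual shift is pinned down by requiring $\phi_i$ to be an isomorphism of graded Higgs modules: matching the grading of $(E,\theta)$ with that of $Gr_{Fil_i}C_1^{-1}(E,\theta)$ forces the shift to be zero. Thus $Fil_1=Fil_2=:Fil$, and we are reduced to comparing two graded Higgs isomorphisms $\phi_1,\phi_2\colon Gr_{Fil}C_1^{-1}(E,\theta)\xrightarrow{\sim}(E,\theta)$. Stability of $(E,\theta)$ together with the algebraic closedness of $k$ gives $\Aut(E,\theta)=k^{*}$, so $\phi_2=c\phi_1$ for some $c\in k^{*}$.

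It remains to build an isomorphism in $HDF_{1}$ from $(E,\theta,Fil,\phi_1)$ to $(E,\theta,Fil,\phi_2)$. The cleanest route is to transfer the problem via Proposition \ref{correspondence in the type (0,1) case} to the corresponding Fontaine modules $(H,\nabla,Fil,\Phi_i)$, where $\Phi_2=c\Phi_1$, and seek $g=\lambda\cdot\id_{H}$ there. The compatibility $g\circ\Phi_1=\Phi_2\circ F^{*}(Gr(g))$ reduces, using the Frobenius semilinearity of $\Phi$, to $\lambda=c\lambda^{p}$, i.e.\ $\lambda^{p-1}=c^{-1}$, which is solvable in $k^{*}$ since $k$ is algebraically closed; the resulting $g$ is horizontal, preserves $Fil$, and intertwines the Frobenius structures. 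Pulling back through the equivalence $\mathcal{GR}$ supplies the desired isomorphism of flows. The main obstacle is precisely this scalar step: the naive choice $f=\id_{E}$ in $HDF_{1}$ succeeds only when $\phi_1=\phi_2$, and one must exploit the Frobenius-type semilinearity (visible as the exponent $p$) together with algebraic closedness of $k$ to convert between the two flows.
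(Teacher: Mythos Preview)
Your proof is correct and follows the paper's strategy: Lemma~\ref{unique fil} pins down $Fil$ (the paper leaves the index-shift issue implicit, which you handle explicitly), stability forces $\phi_2=c\phi_1$ for some $c\in k^*$, and one absorbs $c$ by a scalar automorphism. The paper simply declares the isomorphism $(E,\theta,Fil,\phi)\cong(E,\theta,Fil,c\phi)$ to be ``obvious'' without further comment; your route through the Fontaine-module side is a valid way to supply this step, though one could equally stay in $HDF_1$ by taking $f=\mu\cdot\id_E$ with $\mu^{p-1}=c^{-1}$ and using $C_1^{-1}(\mu\cdot\id)=\mu^p\cdot\id$ to verify the compatibility square directly.
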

\begin{proof}
Let $(E,\theta, Fil,\phi)$ be a one-periodic Higgs-de Rham flow. First, because of the stability assumption on $(E,\theta)$,  it follows from Lemma \ref{unique fil} that the Hodge filtration $Fil$ on $C_1^{-1}(E,\theta)$ is unique up to a possible shift of index. However, $\phi$ is an isomorphism of \emph{graded} Higgs modules, $Fil$ has to be unique.  Second, for any other choice $\varphi$ making $(E,\theta,Fil,\varphi)$ periodic,  the composite $\varphi\circ \phi^{-1}$ is an automorphism of
$(E,\theta)$. Since it is stable, one must have $\varphi=\lambda\phi$
for a nonzero constant $\lambda\in k$. And there is an obvious isomorphism in the category $HDF$:
$$(E,\theta,Fil,\phi)\cong (E,\theta,Fil,\lambda\phi),$$ the proposition follows.
\end{proof}
An $\F_{p}$-representation $\rho$ of $\pi_1(X_K)$ is said to be absolutely irreducible if the $k$-representation $\rho\otimes k$ is irreducible. A direct consequence of the previous proposition is the following
\begin{corollary}\label{stable corresponds to irreducible}
Notation as above. There is a natural equivalence of categories between the category of absolutely irreducible crystalline $\F_{p}$-representations of $\pi_1(X_{K})$ with Hodge-Tate weights $\leq p-2$ and the category of one-periodic stable Higgs bundles in $HIG_{p-2}(X_1)$.
\end{corollary}
\begin{proof}
Proposition \ref{unique filtration} embeds the category of one-periodic stable Higgs bundles over $X_1$ into the category of one-periodic Higgs-de Rham flows over $X_1$ as a full subcategory. Under the Higgs correspondence and Fontaine-Laffaille-Faltings correspondence (Proposition \ref{correspondence in the type (0,1) case} and Theorem \ref{Faltings thm}), the category of one-periodic stable Higgs bundles in $HIG_{p-2}(X_1)$ corresponds to a full subcategory of crystalline $\F_{p}$-representations of $\pi_1(X_K)$. The image is characterized by the absolutely irreducibility of the associated $\F_p$-representations, as shown by \cite[Theorem 1.3]{SZPeriodic}.
\end{proof}
\begin{remark}\label{reconstruction}
Over $\C$, the gr-functor is an equivalence of categories between the category of irreducible complex polarized variations of Hodge structure and the category of stable graded Higgs bundles. The above corollary is a characteristic $p$-analogue of this equivalence. However, compared with the transcendental nature of the quasi-inverse functor in the complex case, the one in the characteristic $p$ case is much more constructive: indeed, it has already been noticed by Ogus-Vologodsky (see \cite[Definition 4.16]{OV} and the paragraph thereafter) that the flat bundle $(H,\nabla)$ of a strict $p$-torsion Fontaine module is reconstructed by $C_1^{-1}(E,\theta)$, no matter whether $(E,\theta)$ stable or not (but it is at least semistable as explained in Proposition \ref{geometric case}). This point has also been explicitly emphasized in \S4 \cite{LSZ}. Furthermore, in Theorem \ref{goodfil} Appendix A, we show how to construct a gr-semistable filtration on $(H,\nabla)$. By the proof of Lemma \ref{unique fil}, it has to coincide with the Hodge filtration $Fil$ by a possible shift of index, which is also uniquely determined by the gradings in $E$. The construction of the relative Frobenius from an isomorphism $Gr_{Fil}(H,\nabla)\cong (E,\theta)$ is the major content of the functor $\sC^{-1}$ in the proof of Proposition \ref{correspondence in the type (0,1) case}. 
\end{remark}

The above results may be lifted to $W_n$ for $n$ arbitrary.
\begin{proposition}\label{uniqueness for n}
Let $(E,\theta)$ be the initial term of a one-periodic Higgs-de Rham flow over $X_n$. If the mod $p$ reduction of $(E,\theta)$ is stable over $X_1$, then there is a unique one-periodic Higgs-de Rham flow for $(E,\theta)$ up to isomorphism.
\end{proposition}
 \begin{proof}
We prove by induction on $n$. The $n=1$ case is Proposition \ref{unique filtration}. We show make the induction hypothesis as follows: let $(\bar E, \bar \theta)$ be the mod $p^{n-1}$-reduction of $(E,\theta)$, and $(\bar E,\bar \theta, \bar{Fil},\bar\psi)$ be a one-periodic Higgs-de Rham flow over $X_{n-1}$. Then $\bar{Fil}$ is the unique lifting of the Hodge filtration in the characteristic $p$ and $\bar \psi$ is the unique lifting of the isomorphism in characteristic $p$ up to a scalar in $W_{n-1}$. Now let $(E,\theta,Fil_i,\psi_i), i=1,2$ be two one-periodic Higgs-de Rham flow over $X_n$ lifting $(\bar E,\bar \theta,\bar{Fil},\bar \psi)$ over $X_{n-1}$. It suffices to show the following claims:
 \begin{itemize}
 \item [(i)] $Fil_1=Fil_2$;
 \item[(ii)] $\psi_1=\lambda\psi_2, \ \lambda\in W_{n}$.
 \end{itemize}
Without loss of generality, we may assume that $\bar{Fil}$ is reduced. Assume the contrary of the statement (i). Let $a$ be the largest integer such that $Fil_1^a$ differs from $Fil^a_2$, and then $b$ be the largest integer such that $Fil_1^{a-i} \subseteq Fil_2^{b-i}$ for each $i\geq 0$. \\

Case 1. $b>a$. Then as $$Fil^{a}_1\subset Fil^{b}_2\subset Fil_2^{a+1}=Fil_1^{a+1}$$ and $Fil_1^{a+1}\subset Fil_1^{a}$, we get $Fil^{a+1}_1=Fil^{a}_1$, and in particular, $\bar{Fil}^{a+1}=\bar{Fil}^a$. A contradiction. \\

Case 2. $b=a$. As $Fil_1^{a}\subset Fil_2^{a}$ and $Fil_1^{a}=Fil_2^{a}\mod p^{n-1}$, it follows that $Fil_1^{a}=Fil_2^{a}$. A contradiction.\\

Case 3. $b<a$. Let $(H,\nabla)=C_n^{-1}(E,\theta)$. We define a morphism
$$
f: Gr_{Fil_1}(H,\nabla)\to Gr_{Fil_2}(H,\nabla)
$$
as follows: for $i>0$, $f$ on the factor $Fil^{a+i}_1/Fil_1^{a+i+1}$ is simply zero; for $i\leq 0$, $f: Fil^{a+i}_1/Fil^{a+i+1}_1\to Fil^{b+i}_2/Fil^{b+i+1}_2$ is the natural morphism. It is easy to check that this gives a morphism of Higgs bundles. Because of the choice of $b$, $f$ is nonzero. As its mod $p^{n-1}$ reduction is clearly the zero map, we obtain a nonzero morphism $\frac{f}{[p^{n-1}]}$ on the mod $p$-reductions of both sides of the morphism $f$, which are isomorphic to the stable Higgs bundle $(E,\theta)_1$ in characteristic $p$. Clearly, it is neither an isomorphism. A contradiction. Therefore, $Fil_1=Fil_2$, and $Gr_{Fil_1}(H,\nabla)=Gr_{Fil_2}(H,\nabla)$. We continue to show the statement (ii). For this, we consider the composite
$$
\phi:=\psi_1\circ\psi_2^{-1}: (E,\theta)\cong (E,\theta).
$$
By the induction hypothesis, $\phi\mod p^{n-1}=\bar \lambda \in W_{n-1}$. Take any lifting $\lambda\in W_{n}$ of $\bar \lambda$ and consider the endomorphism $\phi\rq{}:=\phi-\lambda$ of $(E,\eta)$. As $\phi\rq{}\mod p^{n-1}=0$, we get an endomorphism of the stable Higgs bundle in characteristic $p$:
$$
\frac{\phi\rq{}}{[p^{n-1}]}: (E,\theta)_1\to (E,\theta)_1,
$$
which has to be a scalar $\mu\in k$. So we get $\phi=\lambda+p^{n-1}\mu\in W_{n}$, and the statement (ii) follows.
\end{proof}
The following rigidity theorem for Fontaine modules follows immediately from the previous proposition and the Higgs correspondence.
\begin{corollary}\label{fully faithful}
Let $(H_i,\nabla_i,Fil_i,\Phi_i), i=1,2$ be two Fontaine modules (torsion free or not) over $X/W$, and $(E_i,\theta_i)$ the associated graded Higgs bundles. If $(E_1,\theta_1)$ is isomorphic to $(E_2,\theta_2)$ and mod $p$ Higgs stable, then 
$(H_i,\nabla_i,Fil_i,\Phi_i), i=1,2$ are isomorphic.
 \end{corollary}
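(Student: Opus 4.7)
The plan is to translate the statement via Theorem \ref{periodic corresponds to FF module over a truncated witt ring} into a question about one-periodic Higgs-de Rham flows, and then to invoke the uniqueness lemma just proved to produce a compatible system of isomorphisms mod $p^n$ for every $n$.

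First I would fix the isomorphism $\phi\colon (E_1,\theta_1)\cong (E_2,\theta_2)$ and, for each $n\geq 1$, reduce the two Fontaine modules mod $p^n$. By Theorem \ref{periodic corresponds to FF module over a truncated witt ring} (applied with $f=1$), each mod $p^n$ reduction corresponds to a one-periodic Higgs-de Rham flow over $X_n$ with initial term the mod $p^n$-reduction of $(E_i,\theta_i)$. Transporting the flow attached to $(H_2,\nabla_2,Fil_2,\Phi_2)$ along $\phi$ produces a second one-periodic flow with the same initial term $(E_1,\theta_1)$ over $X_n$. Since the mod $p$ reduction of $(E_1,\theta_1)$ is Higgs stable by hypothesis, the lemma immediately preceding the corollary applies: the two flows must be isomorphic. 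Translating this back through Theorem \ref{periodic corresponds to FF module over a truncated witt ring} yields an isomorphism $\Phi^{(n)}\colon (H_1,\nabla_1,Fil_1,\Phi_1)\otimes W_n\cong (H_2,\nabla_2,Fil_2,\Phi_2)\otimes W_n$ of Fontaine modules over $X_n$.

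The main obstacle is that the isomorphisms $\{\Phi^{(n)}\}$ produced so far are not a priori compatible across different $n$: part (ii) of the preceding lemma shows that $\Phi^{(n)}$ is unique only up to multiplication by a scalar in $W_n(k)^\times$ (any endomorphism of $(E_1,\theta_1)$ mod $p^n$ is scalar). To overcome this, I would build the compatible system inductively. Start with any $\Phi^{(1)}$; assuming compatible $\Phi^{(1)},\ldots,\Phi^{(n)}$ have been constructed, choose an arbitrary lift $\Phi^{(n+1)}$. Then $\Phi^{(n+1)}\bmod p^n$ and $\Phi^{(n)}$ differ by an automorphism of the mod $p^n$ flow, hence by the preceding lemma by a scalar $\lambda\in W_n(k)^\times$. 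Lifting $\lambda$ to $\tilde\lambda\in W_{n+1}(k)^\times$ and replacing $\Phi^{(n+1)}$ with $\tilde\lambda^{-1}\Phi^{(n+1)}$ yields a lift compatible with $\Phi^{(n)}$.

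Finally, taking the inverse limit of the compatible system $\{\Phi^{(n)}\}$ over $n$ produces the desired isomorphism of Fontaine modules over $X/W$, since each $H_i$ is $p$-adically complete (being $p$-torsion free and finitely generated over $\widehat{X}$). The only non-formal ingredient is the mod $p$ stability of $(E_i,\theta_i)$, which is used twice: once to feed the uniqueness lemma at each truncation level, and once to control the scalar ambiguity so that an honest inverse system exists.
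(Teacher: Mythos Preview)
Your proposal is correct and follows the same route as the paper. The paper declares the corollary ``immediate'' from the preceding uniqueness lemma and Theorem \ref{periodic corresponds to FF module over a truncated witt ring}, without spelling out the passage to the inverse limit; your treatment of the scalar ambiguity and the inductive adjustment of $\Phi^{(n+1)}$ by a lifted unit is exactly the detail one needs to make that passage rigorous, and it uses precisely part (ii) of the lemma's proof (endomorphisms are scalars in $W_n$).
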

Combining Proposition \ref{uniqueness for n} with Theorem \ref{periodic corresponds to FF module over a truncated witt ring} and Corollary \ref{stable corresponds to irreducible}, we obtain the following
\begin{corollary}\label{irreducible over W}
There is a natural equivalence of categories between the category of crystalline $\Z_{p}$ (resp. $W_n(\F_p)$) representations of $\pi_1(X_K)$ with Hodge-Tate weight $\leq p-2$ whose mod $p$ reduction is absolutely irreducible and the category of one-periodic Higgs bundles over $X/W$ (resp. $X_n/W_n$) whose exponent is $\leq p-2$ and mod $p$ reduction is stable.
 \end{corollary}

\appendix
\section{Semistable Higgs bundles of small ranks are strongly Higgs semistable}
\begin{center}
Guitang Lan, Mao Sheng, Yanhong Yang and Kang Zuo
\end{center}

In this appendix, we shall prove the following result.

\begin{theorem}\label{strsst}
Let $(E,\theta)$ be a nilpotent semistable Higgs module over a smooth projective variety $X$ defined over $\bar{\F}_p$. If $\textrm{rank} \ E\leq p$, then $(E,\theta)$ is strongly semistable.
\end{theorem}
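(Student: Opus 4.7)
The plan is to argue by induction on the rank $r \leq p$, with the base case $r=2$ supplied by Proposition \ref{rank two semistable implies strongly semistable}. Given a semistable nilpotent Higgs bundle $(E,\theta)=:(E_0,\theta_0)$ of rank $r \leq p$, I would construct a semistable Higgs--de Rham flow starting from $(E_0,\theta_0)$ by alternately applying the inverse Cartier transform $C_1^{-1}$ and choosing, at each stage, a Hodge filtration $Fil_i$ on $(H_i,\nabla_i):=C_1^{-1}(E_i,\theta_i)$ whose associated graded Higgs bundle $(E_{i+1},\theta_{i+1}):=Gr_{Fil_i}(H_i,\nabla_i)$ is again semistable. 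The rank hypothesis $r\leq p$ is used in two places: first, it guarantees that any nilpotent Higgs field on $E_i$ has exponent $\leq r-1 \leq p-1$, so that $C_1^{-1}$ is defined on all the intermediate terms; second, since $C_1^{-1}$ and $Gr$ both preserve rank, all $(E_i,\theta_i)$ remain of rank $r\leq p$.

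The easier half, namely that $(H,\nabla):=C_1^{-1}(E_i,\theta_i)$ is automatically $\nabla$-semistable whenever $(E_i,\theta_i)$ is Higgs semistable, is handled by the slope computation already used in Proposition \ref{rank two semistable implies strongly semistable}: for any $\nabla$-invariant subsheaf $M\subset H$ the Cartier transform $C_1(M,\nabla|_M)\subset (E_i,\theta_i)$ is a Higgs subsheaf of slope $\mu(M)/p$, so Higgs semistability forces $\mu(M)\leq \mu(H)$. To conclude that the constructed flow is semistable over a common finite subfield of $k$ (as required by Definition 2.1), I would invoke, exactly as in the proof of Theorem \ref{quasiperiodic equivalent to strongly semistable}, Langer's boundedness theorem for the moduli of semistable Higgs bundles of fixed rank with trivial Chern classes, combined with the Chern-class formula $c_\ell(E_{i+1})=p^\ell c_\ell(E_i)$ from \cite[Theorem 4.17]{OV} (applied after reducing to the case of trivial Chern classes, which is the only case relevant for producing an infinite semistable flow).

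The decisive step, and the main obstacle, is the construction at each stage of a Hodge filtration $Fil_i$ on the $\nabla$-semistable flat bundle $(H,\nabla)$ of rank $\leq p$ whose Griffiths-graded is a semistable Higgs bundle; this is precisely the content of the later Theorem \ref{goodfil}. My plan for this is recursive: build $Fil^1\subset H$ as a maximal subsheaf with $\mu(H/Fil^1)\leq \mu(H)$ (Griffiths transversality $\nabla(Fil^1)\subset Fil^0\otimes \Omega = H\otimes \Omega$ is automatic for the top filtration step), then endow $(Fil^1,\nabla|_{Fil^1})$ with a compatible filtration by iterating the construction, using $\nabla$-semistability of $(H,\nabla)$ at every level to control the slopes of candidate pieces. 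The rank bound $r\leq p$ enters here through the finite length of the resulting filtration and through the constraint on the Higgs-field exponent inherited by $Gr_{Fil}(H,\nabla)$.

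The subtle point is to verify that the resulting $Gr_{Fil}(H,\nabla)$ actually has no destabilizing Higgs subsheaf. I would reduce this to the case where $(H,\nabla)$ is $\nabla$-stable (if it is only $\nabla$-semistable, a proper $\nabla$-subbundle of the same slope allows one to apply the inductive hypothesis to the pieces of a Jordan--H\"older filtration and glue the resulting gr-semistable Hodge filtrations together after an index shift). In the $\nabla$-stable case, any would-be Higgs destabilizing subsheaf of $Gr_{Fil}(H,\nabla)$ has to lift, via the iterative construction of $Fil$, to a subsheaf of $H$ whose slope can be directly compared with $\mu(H)$; controlling this lift is where I expect the argument to be most delicate and where the rank constraint $r\leq p$ (ensuring that only boundedly many slope inequalities are coupled together through the successive layers of $Fil$) is most essential.
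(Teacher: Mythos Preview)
Your overall architecture is correct and matches the paper: iterate $C_1^{-1}$ and $Gr_{Fil}$, using that $C_1^{-1}$ of a semistable Higgs bundle is $\nabla$-semistable (exactly via the Cartier-transform argument you describe), and the rank bound $r\leq p$ enters only to keep the nilpotency exponent $\leq p-1$. The induction on rank, however, is not needed and is not how the paper proceeds; Theorem \ref{goodfil} is proved uniformly, with no rank hypothesis.

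The genuine gap is in your construction of the gr-semistable Hodge filtration. Your plan --- choose $Fil^1$ by a maximality condition, recurse on $Fil^1$, reduce to the $\nabla$-stable case via Jordan--H\"older, and then ``lift'' a putative Higgs-destabilizer of $Gr_{Fil}(H,\nabla)$ back to a subsheaf of $H$ --- does not work as stated. A Higgs subsheaf of $Gr_{Fil}(H,\nabla)$ has no canonical lift to $H$, and there is no reason a top-down greedy choice of $Fil^1$ forces semistability of the full grading. The paper (following Simpson) instead starts from an \emph{arbitrary} Hodge filtration $Fil$ (e.g.\ the trivial one) and, whenever $Gr_{Fil}(H,\nabla)$ is unstable, uses the fact that its maximal destabilizing subsheaf is itself graded (Lemma \ref{maxsubbundle}) to define a new filtration $\xi(Fil)$ by $\xi(Fil)^{i+1}=\mathrm{Ker}(V\to (V/Fil^{i+1})/I^i_{Fil})$. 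One then shows that the pair $(\mu_{\max},r_{\max})$ of the grading is nonincreasing under $\xi$ and, crucially, strictly decreases after at most $n$ iterations unless the grading is already semistable (Lemmas \ref{invariants}--\ref{terminal}); since these invariants take only finitely many values, the process terminates. This is the missing mechanism in your argument.

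Your handling of the ``common finite subfield'' requirement is also off. Boundedness of moduli is irrelevant here (and would fail anyway once Chern classes are nontrivial, since $c_\ell$ scales by $p^\ell$ along the flow). The paper's point is that if one starts from the trivial filtration, the resulting gr-semistable filtration is canonical --- preserved by every automorphism of $(H,\nabla)$ --- and therefore defined over the same finite subfield as $(E,\theta)$; by induction all terms of the flow stay over that subfield.
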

This same result has also been obtained by A. Langer independently. See \cite[Theorem 5.12]{Langer1}. In the early version of the manuscript \cite{LSZ2012}, we have proposed the following conjecture which inspired the above theorem.
\begin{conjecture}\label{conjecture}  \cite[Conjecture 2.8]{LSZ2012}
A nilpotent semistable Higgs module of exponent $\leq p$ is strongly Higgs semistable.
\end{conjecture}
We had shown the rank two case in \cite{LSZ2012}. See \cite[Theorem 2.6]{LSZ2012} which is just Proposition \ref{rank two semistable implies strongly semistable} in the current version. Shortly after the appearance of \cite{LSZ2012}, Lingguang Li \cite{Li} has verified the conjecture in the rank three case. The conjecture requires modification in order to be true in the higher rank case.\\

The key step in the proof is Theorem \ref{goodfil}, a positive characteristic generalization of Simpson's result \cite[Theorem 2.5]{Simpson}, which states that over a complex smooth projective curve, every vector bundle with an integrable holomorphic connection admits a Griffiths transverse filtration, such that the associated-graded Higgs bundle is semistable. This generalization is proved similarly as  in \cite[Theorem 2.5]{Simpson}.\\

Throughout the appendix, we assume that $Y$ is a smooth projective variety over an algebraically closed field $k$, $H$ is an ample divisor of $Y$, Higgs modules as well as flat modules are torsion free, and the semistability is referred to the $\mu=\mu_H$-semistability.
\begin{definition}
A flat module $(V, \nabla)$ is called $\nabla$-semistable if
for every submodule $V_1\subset V$ with
$\nabla(V_1)\subset V_1\otimes_{\sO_Y}\Omega_Y$, $\mu(V_1)\leq \mu(V)$ holds.
\end{definition}
The goal of this section is to prove the following theorem.
\begin{theorem}\label{goodfil}
Let $(V, \nabla)$ be a $\nabla$-semistable flat module over a smooth projective variety $Y$ over an algebraically closed field $k$. Then there exists a Griffiths transverse filtration $Fil$ such that the associated-graded Higgs module to $(V, \nabla, Fil)$ is semistable.
\end{theorem}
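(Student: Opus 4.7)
I plan to follow the iterative-modification strategy of Simpson's argument. Start with any Hodge filtration on $V$ --- for instance the trivial one $V = Fil^0 \supsetneq Fil^1 = 0$, which already concludes the proof if $V$ is $\mu$-semistable as a vector bundle --- and consider the associated graded Higgs bundle $(E,\theta) := Gr_{Fil}(V,\nabla)$. If $(E,\theta)$ is Higgs semistable, we are done. Otherwise, let $F \subset (E,\theta)$ be the maximal Higgs-destabilizing saturated subsheaf; its uniqueness together with the $\G_m$-action on $E$ coming from the grading forces $F$ to be graded, $F = \bigoplus_p F^p$ with $F^p \subset E^p$, and Higgs-invariance reads $\theta(F^p) \subset F^{p-1}\otimes \Omega_Y$.

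From $F$ I construct a refined Hodge filtration by setting
\[
\widetilde{Fil}^p := \pi_p^{-1}(F^p) \subset Fil^p, \qquad \pi_p : Fil^p \twoheadrightarrow Fil^p/Fil^{p+1} = E^p.
\]
A direct verification gives the containment $\widetilde{Fil}^{p+1} \subset \widetilde{Fil}^p$ and short exact sequences on associated graded pieces
\[
0 \to G^{p+1} \to \widetilde{Gr}^p \to F^p \to 0
\]
where $G := E/F$. Griffiths transversality for $\widetilde{Fil}$ is an immediate consequence of $\theta$-invariance of $F$: for $v \in \widetilde{Fil}^p$, the element $\nabla(v) \in Fil^{p-1}\otimes \Omega_Y$ projects modulo $Fil^p$ to $\theta(\pi_p(v)) \in F^{p-1}\otimes \Omega_Y$, so $\nabla(v) \in \pi_{p-1}^{-1}(F^{p-1})\otimes\Omega_Y = \widetilde{Fil}^{p-1}\otimes\Omega_Y$.

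The remaining task is to show that the iteration terminates in finitely many steps. The invariant I would track is the Harder--Narasimhan polygon of $Gr_{Fil}(V,\nabla)$ viewed as a graded Higgs bundle --- a concave polygon from $(0,0)$ to $(\mathrm{rank}\,V,\deg V)$. After the modification, the destabilizer $F$ has ceased to be a subsheaf of the new graded and appears only as a quotient; the new subsheaf coming from $G$ has slope strictly less than $\mu(V)$ since $F$ was maximally destabilizing. A bookkeeping argument using concavity of the HN polygon together with the fact that $F$ is itself Higgs semistable of slope $\mu_{\max}(E)$ should yield that the HN polygon of $\widetilde{Gr}(V)$ lies lexicographically strictly below that of $Gr_{Fil}(V)$ whenever the modification is genuinely nontrivial.

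The principal obstacle is making this strict-decrease argument precise and supplementing it with a uniform boundedness statement for the HN polygons of $Gr_{Fil}(V)$ over all Hodge filtrations $Fil$. This is where the $\nabla$-semistability hypothesis enters: it bounds the slopes of saturated subsheaves of $V$ arising as preimages of Higgs-invariant subsheaves of any graded quotient; combined with the standard fact that slopes of coherent subsheaves have denominators bounded in terms of $\mathrm{rank}\,V$ and $\dim Y$, this constrains the possible HN polygons to a finite set. Termination then follows from well-foundedness of the lexicographic ordering on polygons, and at the terminal filtration the graded Higgs bundle $Gr_{Fil}(V,\nabla)$ admits no Higgs-destabilizing subsheaf, hence is semistable as required.
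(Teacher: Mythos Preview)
Your strategy and construction match the paper's: Simpson's iterated modification, with the new filtration $\widetilde{Fil}$ built from the maximal graded Higgs destabilizer (your $\widetilde{Fil}^p$ is the paper's $\xi(Fil)^{p+1}$, up to an index shift). The gap is exactly where you flag it, but your proposed resolution is not the right one.

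The HN polygon---or even just the pair $(\mu_{\max}, r_{\max})$---need \emph{not} strictly decrease at each step; only the weak inequality holds. From the exact sequence $0 \to E/I_{Fil} \to Gr_{\xi(Fil)}(V) \to I_{Fil}^{[1]} \to 0$ one extracts $\mu_{\max}(\xi) \le \mu_{\max}$, and if equality then $r_{\max}(\xi) \le r_{\max}$; but both can be equalities, in which case the new destabilizer $I_{\xi(Fil)}$ injects into $I_{Fil}^{[1]}$, isomorphically outside codimension two, and the polygon does not move (this is the paper's Lemma~\ref{invariants}). So boundedness plus weak decrease gives only stabilization, not termination at a semistable bundle.

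The $\nabla$-semistability hypothesis does not enter as a slope bound on preimages---the preimage in $V$ of a Higgs-invariant graded subsheaf is in general \emph{not} $\nabla$-invariant, so your sentence ``it bounds the slopes of saturated subsheaves of $V$ arising as preimages of Higgs-invariant subsheaves'' does not go through. Its true role is to rule out indefinite stagnation. If $(\mu_{\max}, r_{\max})$ stays constant for $n$ consecutive steps (where $n$ is the current level of $Fil$), the chain of inclusions $I_{\xi^{k}(Fil)} \hookrightarrow I_{Fil}^{[k]}$ shifts the destabilizer through the grading until, for some $n_0\le n$, the exact sequence above becomes graded-split. At that moment the destabilizer $I^{[n_0]}$ lifts to a genuine $\nabla$-invariant subbundle of $V$ of slope $\mu_{\max} > \mu(V)$, contradicting the hypothesis. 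This ``shift until it falls off the grading'' mechanism is the paper's Lemma~\ref{terminal}, and it is the missing ingredient in your sketch. With it in hand, the finiteness of values of $(\mu_{\max},r_{\max})$ (which follows already from weak decrease and bounded denominators, without any appeal to $\nabla$-semistability) finishes the proof.
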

A Griffiths transverse filtration on $(H,\nabla)$ with semistable graded Higgs module is said to be \emph{gr-semistable}. 
\begin{remark}
By \cite{Weil}, every holomorphic vector bundle  that  admits a connection  is of degree $0$,  thus in this case every flat bundle $(V,\nabla)$ is automatically $\nabla$-semistable. So the above result generalizes \cite[Theorem 2.5]{Simpson}. On the other hand, the $\nabla$-semistability condition in the statement is indeed necessary for its truth over a general field. Let $k$ be a field of characteristic $p\geq 3$ and $V=\sO\oplus \sO(p)$ be the rank two vector bundle over $\P^1_k$, equipped with the canonical connection $\nabla_{can}$ of the Cartier descent theorem. Then $(V,\nabla_{can})$ admits \emph{no} gr-semistable filtration.
 \end{remark}

To prove the above theorem, we need some lemmas.

\begin{lemma}\label{maxsubbundle}
Let $(E, \theta)$ be a graded Higgs module on $Y$. If $(E, \theta)$ is unstable as a Higgs module, then its maximal destabilizing subsheaf $I\subset E$ is saturated, and it is a sub graded Higgs module, that is $I=\oplus_{i=0}^n I^i$ with $I^i:=I\cap E^{i}$.
\end{lemma}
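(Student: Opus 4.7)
The plan is to prove the two assertions separately. The saturation statement is standard Harder--Narasimhan formalism; the grading statement follows by exhibiting a natural $\mathbb{G}_m$-action on $(E,\theta)$ that, up to rescaling the Higgs field, preserves the set of Higgs subsheaves, and then invoking uniqueness of the maximal destabilizing subobject.

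First I would establish saturation. Let $\tilde I\subset E$ denote the saturation of $I$. I would verify that $\tilde I$ is again a Higgs subsheaf: the Higgs field $\theta$ induces a map $\tilde I/I \to (E/\tilde I)\otimes \Omega^1_Y$, but $\tilde I/I$ is torsion while $(E/\tilde I)\otimes \Omega^1_Y$ is torsion-free (since $E/\tilde I$ is torsion-free and $\Omega^1_Y$ is locally free), so the induced map vanishes, giving $\theta(\tilde I)\subset \tilde I\otimes\Omega^1_Y$. Since $\tilde I\supset I$ has the same rank and degree $\geq\deg(I)$, either $\mu(\tilde I)>\mu(I)$ (contradicting that $I$ has maximal slope) or $\mu(\tilde I)=\mu(I)$, in which case the uniqueness of the maximal destabilizing subsheaf forces $I=\tilde I$.

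Next, for the grading, I would consider the $\mathbb{G}_m$-action on $E$ given by $\phi_t|_{E^i}=t^i\cdot\mathrm{id}$. A direct computation on $E^i$ gives
\[
\theta\circ\phi_t=t\cdot(\phi_t\otimes\mathrm{id})\circ\theta,
\]
so $\phi_t$ furnishes an isomorphism of Higgs bundles $(E,\theta)\xrightarrow{\sim}(E,t\theta)$ for each $t\in k^\times$. Since Higgs subsheaves of $(E,\theta)$ and $(E,t\theta)$ coincide (rescaling $\theta$ by a nonzero scalar preserves the property of being $\theta$-invariant), the maximal destabilizing subsheaf of $(E,t\theta)$ is again $I$. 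By the uniqueness of the maximal destabilizing subsheaf, $\phi_t(I)=I$ for all $t\in k^\times$. Consequently $I$ is stable under the $\mathbb{G}_m$-action, hence decomposes as a direct sum of its weight pieces, which are precisely $I\cap E^i$; thus $I=\bigoplus_i(I\cap E^i)$, as required.

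The only slightly subtle point is the compatibility check $\phi_t\circ\theta = t^{-1}\theta\circ\phi_t$ (or equivalently the observation that rescaling $\theta$ preserves the lattice of Higgs subsheaves), but this is immediate from the weight-shifting property $\theta(E^i)\subset E^{i-1}\otimes\Omega^1_Y$. I do not anticipate any genuine obstacle; the argument is formal once one notices that the $\mathbb{G}_m$-action defining the grading intertwines with $\theta$ up to a global scalar.
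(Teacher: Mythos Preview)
Your proof is correct and follows essentially the same route as the paper: both exploit the scaling automorphism $\phi_t|_{E^i}=t^i\,\mathrm{id}$, which gives a Higgs isomorphism $(E,\theta)\cong(E,t^{-1}\theta)$ (note the sign---you wrote $t\theta$, though as you observe this is irrelevant since rescaling $\theta$ preserves the lattice of Higgs subsheaves), and then invoke uniqueness of the maximal destabilizer to conclude $\phi_t(I)=I$. The paper makes your final step (``$\mathbb{G}_m$-stable implies graded'') explicit by choosing a single $t$ with $t^i\neq 1$ for $0<i\leq n$ and inverting the Vandermonde matrix formed by $f^0(s),\ldots,f^n(s)$ to extract each graded component $s^i\in I$; this is precisely the concrete content of the weight-decomposition claim you invoke.
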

\begin{proof}
The saturated property of the Higgs subsheaf $I\subset E$ follows from the maximality. To show the second property, one chooses $t\in k$ such that $t^i\neq 1 $ for $0<i\leq n$. Note that there is an isomorphism $f: (E, \theta)\to (E, \tfrac{1}{t}\theta)$ given by $f|_{E^i}=t^i\text{Id}$. Because of the uniqueness of the maximal destabilizing subobject, we see that $f(I)=I$. Let $s$ be any local section of $I$. Write  $s$ as  $\sum_{i=0}^ns^i$, where $s^i$ is a local section of $E^i$. Then for $j\geq 0$,
\begin{align*}
f^{j} (s)=\sum_{i=0}^nt^{ji}s^i \in I.
\end{align*}
Consider
 $$
 \begin{pmatrix}
s\\
f(s)\\
\vdots\\
f^n(s)
 \end{pmatrix} =
 \begin{pmatrix}
   1 & 1 &1 &\cdots &1 \\
   1 & t &t^2&\cdots &t^n\\
   \vdots & \vdots& \vdots& \ddots & \vdots \\
   1 & t^n& t^{2n}& \cdots &t^{n^2}
 \end{pmatrix} \cdot
 \begin{pmatrix}
s^0\\
s^1\\
\vdots\\
s^n
 \end{pmatrix}.
$$
By assumption on $t$, the coefficient matrix is invertible; thus all $s^i$'s are local sections of $I$ and $I =\oplus_{i=0}^n I\cap E^{i}$.
\end{proof}

Let $(V, \nabla)$ be a flat module over $Y$. Start with an arbitrary Griffiths transverse filtration $Fil$ of level $n$ and consider the associated Higgs module $(\text{Gr}_{Fil}(V), \theta)$. If  $(\text{Gr}_{Fil}(V), \theta)$  is unstable, let $I_{Fil}$  be  its  maximal destabilizing subobject. By Lemma \ref{maxsubbundle},
\begin{align}\label{Eq120}
I_{Fil}=\oplus_{i=0}^nI_{Fil}^{i}, \quad  I_{Fil}^i\subset Fil^i/Fil^{i+1} \subset
V/Fil^{i+1}.
\end{align}
Following the construction of Simpson (see \S3 \cite{Simpson}), we define an operation $\xi$ on the set of Griffiths transverse filtrations. The new filtration $\xi(Fil)$ of $V$ is given by
\begin{align}\label{Eq122}
\xi(Fil)^{i+1}:=\text{Ker}(V\to  \frac{ V/Fil^{i+1}}{I_{Fil}^i} ), \text{
for }   0\leq i\leq n; \quad \xi(Fil)^0=V.
\end{align}
Note that $Fil^i\supset \xi(Fil)^{i+1}\supset Fil^{i+1}$, and there is a short exact sequence:
\begin{align}\label{Eq124}
0\to \text{Gr}^i_{Fil}(V)/I_{Fil}^i\to \text{Gr}^i_{\xi(Fil)}(V) \to I_{Fil}^{i-1}
\to 0, \quad \text{for }  0\leq i\leq n+1.
\end{align}
Adding altogether, we obtain a short exact sequence of graded Higgs modules:
\begin{align}\label{exactseq}
0\to \text{Gr}_{Fil}(V)/I_{Fil}\to \text{Gr}_{\xi(Fil)}(V) \overset{h}\to I_{Fil}^{[1]} \to 0,
\end{align}
where $E^{[k]}$ denotes for the graded Higgs module $E$ with index shifted so that $(E^{[k]})^i=E^{i-k}$. If $(E, \theta)$ is unstable, let $\mu_\text{max}(E)$ and $r_\text{max}(E)$ denote respectively the slope and rank of the maximal destabilizing subobject of $E$; otherwise, let $\mu_\text{max}(E)=\mu(E)$ and $r_\text{max}(E)=\text{rk}(E)$.  By (\ref{exactseq}), we have
\begin{lemma}\label{invariants}
The following statements hold:
\begin{enumerate}
\item $\mu_\text{max}(\text{Gr}_{\xi(Fil)}(V)) \leq  \mu_\text{max}(\text{Gr}_{Fil}(V)) $.

\item  If $\mu_\text{max}(\text{Gr}_{\xi(Fil)}(V)) =  \mu_\text{max}(\text{Gr}_{Fil}(V)) $, then $$r_\text{max}(\text{Gr}_{\xi(Fil)}(V)) \leq r_\text{max}(\text{Gr}_{Fil}(V)) .$$

\item  Furthermore, if $r_\text{max}(\text{Gr}_{\xi(Fil)}(V)) = r_\text{max}(\text{Gr}_{Fil}(V))$, then the composite map $ I_{\xi(Fil)} \to \text{Gr}_{\xi(Fil)}(V)\to I_{Fil}^{[1]}$ is injective and is an isomorphism outside a codimension two closed subset of $Y$, where $ I_{\xi(Fil)}$  is  the maximal destabilizing subobject of $\text{Gr}_{\xi(Fil)}(V)$.
\end{enumerate}
\end{lemma}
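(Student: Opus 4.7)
The plan is to derive all three statements from the short exact sequence of graded Higgs bundles (\ref{exactseq}) by analyzing how Higgs subsheaves of the middle term interact with the two extremes. Set $\mu_0 := \mu_{\max}(\text{Gr}_{Fil}(V))$. Two properties of the outer terms serve as the main inputs: the quotient satisfies $\mu_{\max}(\text{Gr}_{Fil}(V)/I_{Fil}) < \mu_0$ by the Harder--Narasimhan maximality of $I_{Fil}$ (the next HN slope is strictly smaller), while $I_{Fil}^{[1]}$ is Higgs semistable of slope exactly $\mu_0$, being isomorphic, up to index shift, to the maximal destabilizing subobject of $\text{Gr}_{Fil}(V)$.

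For (1), I would take an arbitrary nonzero Higgs subsheaf $J \subset \text{Gr}_{\xi(Fil)}(V)$, form $A := J \cap (\text{Gr}_{Fil}(V)/I_{Fil})$ and $B := J/A \hookrightarrow I_{Fil}^{[1]}$, and bound the slopes of the two pieces. For $A$ nonzero, its saturation inside $\text{Gr}_{Fil}(V)/I_{Fil}$ has the same rank, only larger degree, and its slope is bounded by $\mu_{\max}(\text{Gr}_{Fil}(V)/I_{Fil}) < \mu_0$; hence $\mu(A) < \mu_0$. For $B$, semistability of $I_{Fil}^{[1]}$ gives $\mu(B) \leq \mu_0$. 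A weighted-average computation then yields $\mu(J) \leq \mu_0$, with the crucial refinement that equality forces $A = 0$.

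Applying this refinement to $J = I_{\xi(Fil)}$ under the hypothesis of (2) immediately produces an injection $I_{\xi(Fil)} \hookrightarrow I_{Fil}^{[1]}$ of graded Higgs sheaves, whence $r_{\max}(\text{Gr}_{\xi(Fil)}(V)) \leq \text{rk}(I_{Fil}) = r_{\max}(\text{Gr}_{Fil}(V))$, proving (2). For (3), the same injection has matching ranks by hypothesis, so its cokernel $Q$ is a torsion Higgs sheaf. Comparing degrees via the common slope $\mu_0$ and common rank forces $c_1(Q) = 0$, so $Q$ is supported in codimension at least two, which is precisely the claim that the composite $I_{\xi(Fil)} \to I_{Fil}^{[1]}$ is an isomorphism outside a codimension-two closed subscheme.

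The step I expect to require the most care, modest as it is, is the saturation issue in (1): the sheaf $A$ is not a priori saturated in $\text{Gr}_{Fil}(V)/I_{Fil}$, so the strict inequality $\mu(A) < \mu_0$ must be obtained indirectly via its saturation rather than by a direct application of semistability. Once that bookkeeping is handled, the remainder is a clean application of the Harder--Narasimhan formalism for graded Higgs bundles, with Lemma \ref{maxsubbundle} ensuring that all maximal destabilizing subobjects that intervene are themselves graded Higgs subsheaves.
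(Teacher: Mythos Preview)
Your proposal is correct and follows essentially the same approach as the paper: both arguments intersect $I_{\xi(Fil)}$ (or a general Higgs subsheaf $J$) with the kernel $\mathrm{Gr}_{Fil}(V)/I_{Fil}$ of the map $h$ in the short exact sequence~(\ref{exactseq}) and project to $I_{Fil}^{[1]}$, then bound slopes on the two pieces. Your write-up is considerably more detailed than the paper's one-line proof, and the only cosmetic point is that in (3) what the degree comparison literally yields is $c_1(Q)\cdot \sH^{\dim Y-1}=0$ rather than $c_1(Q)=0$ as a class, but this suffices since a torsion sheaf of degree zero is supported in codimension at least two.
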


\begin{proof}
The proof follows from the following exact sequence:
\begin{align}\label{exactseq1}
0\to \text{Ker}(h) \cap  I_{\xi(Fil)} \to  I_{\xi(Fil)}\to h( I_{\xi(Fil)}) \to 0,
\end{align}
 which is induced from (\ref{exactseq}), and where $h( I_{\xi(Fil)})$ is a subsheaf of $ I_{Fil}^{[1]}$.
\end{proof}

The following lemma is the key to our theorem.

\begin{lemma}\label{terminal}
Let $Fil$ be a Griffiths transverse filtration of level $n$ on a flat module $(V,\nabla)$. Assume $(V,\nabla)$ to be $\nabla$-semistable. Then if the associated-graded Higgs module $(\text{Gr}_{Fil}(V), \theta)$ is unstable, then at least one of the following two strict inequalities holds:
\begin{align*}
\mu_\text{max}(\text{Gr}_{\xi^{n+1}(Fil)}(V))<  \mu_\text{max}(\text{Gr}_{Fil}(V)); \quad   r_\text{max}(\text{Gr}_{\xi^{n+1}(Fil)}(V)) < r_\text{max}(\text{Gr}_{Fil}(V)).
\end{align*}
\end{lemma}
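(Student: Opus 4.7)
The natural strategy is proof by contradiction: assume both inequalities fail to be strict, so that
$$\mu_\text{max}(Gr_{\xi^n(Fil)}(V))=\mu_\text{max}(Gr_{Fil}(V)),\qquad r_\text{max}(Gr_{\xi^n(Fil)}(V))=r_\text{max}(Gr_{Fil}(V)).$$
Since Lemma \ref{invariants} shows that the pair $(\mu_\text{max}, r_\text{max})$ is monotone non-increasing under a single application of $\xi$, this assumption forces these two invariants to be constant along the entire chain $Fil,\xi(Fil),\ldots,\xi^n(Fil)$. Iterating part (3) of the same lemma then produces a sequence of morphisms of graded Higgs sheaves
$$I_{\xi^n(Fil)} \hookrightarrow I_{\xi^{n-1}(Fil)}^{[1]} \hookrightarrow \cdots \hookrightarrow I_{Fil}^{[n]},$$
each injective and an isomorphism outside codimension two. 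Writing $\tilde{Fil}:=\xi^n(Fil)$ and $\tilde I:=I_{\xi^n(Fil)}$, the $n$-fold shift places $\tilde I$ inside Hodge degrees $[n,2n]$; in particular $\tilde I^{i}=0$ for all $i<n$.

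For each $i\geq n$, I would let $\tilde J^i\subset\tilde{Fil}^i$ denote the saturated preimage of $\tilde I^i$, so that $\tilde J^i/\tilde{Fil}^{i+1}=\tilde I^i$. Unwinding the Higgs-subsheaf property of $\tilde I$ through Griffiths transversality yields the relations
$$\nabla(\tilde J^i)\subset \tilde J^{i-1}\otimes\Omega^1_Y \ (i>n),\qquad \nabla(\tilde J^n)\subset \tilde{Fil}^n\otimes\Omega^1_Y.$$
The improved containment for the bottom piece is precisely the manifestation of $\tilde I^{n-1}=0$: because the Higgs field on the lowest-degree graded piece of $\tilde I$ vanishes, $\nabla$ of $\tilde J^n$ is forced to stay inside $\tilde{Fil}^n$ rather than escape into the strictly larger $\tilde{Fil}^{n-1}$.

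From here one aims to produce a $\nabla$-invariant saturated subsheaf $W\subset V$ with $\mu(W)>\mu(V)$ and thereby contradict the $\nabla$-semistability of $(V,\nabla)$. The construction of $W$ exploits the nested chain $\tilde J^n\supset\tilde{Fil}^{n+1}\supset\tilde J^{n+1}\supset\cdots$ coming from the Higgs structure on $\tilde I$. The slope bound $\mu(W)>\mu(V)$ is then obtained from the generic isomorphism $\tilde I\cong I_{Fil}^{[n]}$ (which preserves total rank and degree) together with $\mu(I_{Fil})=\mu_\text{max}(Gr_{Fil}(V))>\mu(V)$.

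\textbf{Main obstacle.} The technical heart of the argument is the passage from the Griffiths-type relations on the $\tilde J^i$ to a genuine $\nabla$-invariant subsheaf of $V$. The relation $\nabla(\tilde J^n)\subset\tilde{Fil}^n\otimes\Omega^1_Y$ is strong, but it does not by itself make $\tilde J^n$ invariant, since $\tilde{Fil}^n$ properly contains $\tilde J^n$ and is itself only Griffiths-transverse, not $\nabla$-invariant; thus a naive $\nabla$-closure of $\tilde J^n$ in $V$ can escape $\tilde{Fil}^n$ altogether and wash out the slope advantage. A careful iterative saturation, using the full interlocking chain $\tilde J^n\supset\tilde{Fil}^{n+1}\supset\tilde J^{n+1}\supset\cdots$ together with rank/degree bookkeeping coming from the generic isomorphism with $I_{Fil}^{[n]}$, is what ensures the resulting $\nabla$-invariant hull retains the destabilizing slope excess. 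This is the characteristic-$p$ analogue of Simpson's iteration in the complex curve setting and is the delicate step on which the whole argument hinges.
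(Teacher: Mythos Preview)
Your overall strategy matches the paper's: argue by contradiction, use Lemma \ref{invariants} to force the invariants $(\mu_{\max},r_{\max})$ to be constant along the chain $Fil,\xi(Fil),\ldots,\xi^n(Fil)$, and obtain the inclusions $I_{\xi^{k}(Fil)}\hookrightarrow I_{Fil}^{[k]}$. However, the ``main obstacle'' you flag is a genuine gap in your argument, and the paper resolves it by a different route that you are missing.

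The paper does \emph{not} push all the way to $\tilde{Fil}=\xi^n(Fil)$ and then try to carve a $\nabla$-invariant subsheaf out of the chain $\tilde J^n\supset\tilde{Fil}^{n+1}\supset\tilde J^{n+1}\supset\cdots$. Instead it observes that at some intermediate step $n_0\le n$ the short exact sequence
\[
0\to \mathrm{Gr}_{\xi^{n_0-1}(Fil)}(V)/I_{\xi^{n_0-1}(Fil)}\to \mathrm{Gr}_{\xi^{n_0}(Fil)}(V)\to I_{\xi^{n_0-1}(Fil)}^{[1]}\to 0
\]
has its two outer terms sitting in \emph{disjoint} degree ranges: the quotient in degrees $\le n'$ and $I^{[1]}$ in degrees $>n'$. (This must happen within $n$ steps because $I_{\xi^k(Fil)}\subset I_{Fil}^{[k]}$ forces the bottom degree of $I$ to climb by one each time while the level of the original filtration is $n$.) At such an $n_0$ the sequence is split as graded bundles; combining this with Lemma \ref{invariants}(3) and the saturatedness of $I_{\xi^{n_0}(Fil)}$ gives $I_{\xi^{n_0}(Fil)}=\bigoplus_{i>n'}E^i$, i.e.\ the maximal destabilizer is precisely the graded of the single filtration step $\xi^{n_0}(Fil)^{n'+1}$. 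The Higgs-subsheaf condition on its bottom piece $E^{n'+1}$ then reads $\theta(E^{n'+1})=0$, which is exactly the statement $\nabla\bigl(\xi^{n_0}(Fil)^{n'+1}\bigr)\subset \xi^{n_0}(Fil)^{n'+1}\otimes\Omega^1_Y$. So the destabilizing $\nabla$-invariant subbundle is simply a filtration step, with no iterative saturation required.

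In short: your proposed ``careful iterative saturation'' at level $n$ is unnecessary and, as you suspected, hard to control. The missing idea is to stop at the first $n_0$ where the degree ranges separate; there the $\nabla$-invariant subbundle is handed to you for free as a term of the filtration itself.
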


\begin{proof}
Put $\mu_k=\mu_\text{max}(\text{Gr}_{\xi^k(Fil)}(V))$ and $r_k=r_\text{max}(\text{Gr}_{\xi^k(Fil)}(V))$ for $k\geq 0$. By Lemma \ref{invariants} (1)-(2), $(\mu_k, r_k)$ decreases in the lexicographic ordering when $k$ grows. Argue by contradiction. Suppose on the contrary that $\mu_{n+1}=\mu_0$ and $r_{n+1}=r_0$. Then, for $0\leq k\leq n$,  $\mu_{k+1}=\mu_{k}$ and $r_{k+1}=r_k$, and, by Lemma \ref{invariants} (3), $I_{\xi^{k+1}(Fil)}\subseteq I_{\xi^{k}(Fil)}^{[1]}\subseteq I_{Fil}^{[k+1]}$, which are locally free and coincide with each other away from a codimension two closed subset $Z\subset Y$. Hence they have the same slope. \\

A direct calculation on the short exact sequences (\ref{exactseq}) for $Gr_{\xi^{k+1}(Fil)}(V)$, for $k$ running from $0$ to $n$, reveals the following fact: the short exact sequence (\ref{exactseq}) of \emph{graded} Higgs modules for $Gr_{\xi^{n+1}(Fil)}(V)$ takes a special form:
\begin{align} \label{seq4}
0\to \oplus_{i=0}^{n}E^i \to \oplus_{i=0}^{2n+1}E^i \to \oplus_{i=n+1}^{2n+1} E^{i} \to 0,
\end{align}
where
$$
Gr_{\xi^{n+1}(Fil)}(V)=Gr_{\xi^{n+1}(Fil)}(V)=\oplus_{i=0}^{2n+1}E^i,
$$
and
$Gr_{\xi^n(Fil)}(V)/I_{\xi^n(Fil)}=\oplus_{i=0}^{n}E^i$ and $I_{\xi^{n}(Fil)}^{[1]}=\oplus_{i=n+1}^{2n+1}E^{i}$. Since over the open subset $U=Y-Z$, $$I_{\xi^{n}(Fil)}^{[1]}|_{U}=I_{\xi^{n+1}(Fil)}|_{U}\subset Gr_{\xi^{n+1}(Fil)}(V)|_{U},$$ it follows that the sequence (\ref{exactseq}) splits over $U$ as graded Higgs modules. Thus, one has
$$
\theta|_{U}(E^{n+1}|_{U})=0.
$$
Hence, $\theta$ is indeed zero on $E^{n+1}$, which means nothing but $V':=(\xi^{n+1}(Fil))^{n+1}$ is $\nabla$-invariant. On the other hand, one has the relation that
$$
\mu(V')=\mu(Gr_{\xi^{n+1}(Fil)}(V'))=\mu(I_{\xi^{n}(Fil)}^{[1]})=
\mu(I_{Fil}^{[n+1]})=\mu(I_{Fil})>\mu(V).
$$
The strict inequality contradicts the $\nabla$-semistability of $(V,\nabla)$. This completes the lemma.
\end{proof}

\begin{proof}[Proof of Theorem \ref{goodfil}]
One takes an arbitrary Griffiths transverse filtration $Fil$ of $(V, \nabla)$ (e.g. the trivial filtration), and then applies consecutively the operator $\xi$ on $Fil$. The meanings of $\mu_k$ and $r_k$ for $k\geq 0$ are the same as those in the previous lemma. Lemma \ref{invariants} says that pairs $(\mu_k,r_k), k\geq 0$ decrease in the lexicographic ordering as $k$ grows. So for certain $k_0\geq 0$, the sequence $\{(\mu_k, r_k)\}_{k\geq k_0}$ becomes constant. Then Lemma \ref{terminal} asserts that $\xi^{k_0}(Fil)$ has to be a gr-semistable filtration of $(V,\nabla)$.
\end{proof}

\begin{remark}\label{canonical}
In the proof of Theorem \ref{goodfil}, if we start with the trivial filtration $V=Fil^0\supset Fil^1=0$, the resulting gr-semistable filtration has the extra property that it is invariant under any automorphism and hence has the same definition field as $(V,\nabla)$. In a Griffiths transverse filtration $Fil: V=Fil^0\supseteq Fil^1\supseteq\cdots$ on $(V,\nabla)$, we call a term $Fil^i, i\geq 1$ \emph{redundant} if $Fil^{i-1}=Fil^{i}$. One can remove all redundant terms from the filtration and shift the indices correspondingly so that the resulting filtration is a strictly decreasing filtration of form $Fil_{red}: V=Fil^0\supsetneq Fil^1\supsetneq Fil^2\supsetneq\cdots$. We call this operation the \emph{reduction} of a filtration, and a filtration \emph{reduced} if it is equal to its reduction. It is not hard to observe that  $Gr_{Fil}(V,\nabla)$ and $Gr_{Fil_{red}}(V,\nabla)$ are isomorphic as Higgs modules. Thus, the reduction of a gr-semistable filtration on $(V,\nabla)$ is again gr-semistable.
\end{remark}

\begin{proof}[Proof of Theorem \ref{strsst}]
Note first that the inverse Cartier transform $(V,\nabla)$ of a Higgs module $(E,\theta)$ is $\nabla$-semistable if and only if $E$ is $\theta$-semistable. This is a direct consequence of the equivalence theorem of the (inverse) Cartier transform of Ogus-Vologodsky \cite{OV}. So for a
nilpotent semistable Higgs module of rank $\leq p$, its inverse Cartier transform is a $\nabla$-semistable flat module of rank $\leq p$. By Theorem \ref{goodfil} and Remark \ref{canonical}, there exists a Griffiths transverse filtration $Fil$ on $(V,\nabla)$ (of level $\leq p-1$ for the reason of rank) such that $Gr_{Fil}(V,\nabla)$ is nilpotent semistable of the same rank $\leq p$ and defined over the same ground field of $(E,\theta)$. Therefore, we can obtain a semistable Higgs-de Rham flow with the leading term $(E,\theta)$ by applying the previous construction inductively. This completes the proof.
\end{proof}

{\bf Acknowledgements:} Christopher Deninger has drawn our attention to the work \cite{Langer}, Adrian Langer has helped us understanding \cite{Langer} and Carlos Simpson has explained us the proof of \cite[Proposition 4.3]{Simpson}. We also appreciate several e-mail communications with Mark Kisin. We thank them heartily. We would like to thank especially an anonymous referee who has pointed out several drawbacks in the original draft and also provided us a very constructive suggestion in the construction of the lifting of the inverse Cartier transform over a truncated Witt ring. The second approach in the construction of the functor $\sT_n$ in \S4 is due to the referee.

\end{document}